\documentclass[onefignum,onetabnum]{siamonline250211}

\usepackage{bm}
\usepackage{booktabs}
\usepackage{bbold}
\usepackage{mathtools}
\usepackage{amssymb}
\usepackage{stmaryrd}
\usepackage{graphicx}
\usepackage{relsize}
\usepackage{algorithm}
\usepackage{algpseudocode}
\usepackage{nicefrac}
\usepackage{multirow}
\usepackage{enumitem}

\crefformat{equation}{(#2#1#3)}
\Crefformat{equation}{#2Equation#3~(#2#1#3)}
\crefrangeformat{equation}{(#3#1#4)--(#5#2#6)}
\Crefrangeformat{equation}{#3Equations#4~(#3#1#4)--(#5#2#6)}
\crefmultiformat{equation}{(#2#1#3)}{ and~(#2#1#3)}{, (#2#1#3)}{, and~(#2#1#3)}
\Crefmultiformat{equation}{#2Equations#3~(#2#1#3)}{ and~(#2#1#3)}{, (#2#1#3)}{, and~(#2#1#3)}
\crefrangemultiformat{equation}{(#3#1#4)--(#5#2#6)}{ and~(#3#1#4)--(#5#2#6)}{, (#3#1#4)--(#5#2#6)}{, and~(#3#1#4)--(#5#2#6)}
\Crefrangemultiformat{equation}{#3Equations#4~(#3#1#4)--(#5#2#6)}{ and~(#3#1#4)--(#5#2#6)}{, (#3#1#4)--(#5#2#6)}{, and~(#3#1#4)--(#5#2#6)}

\newsiamremark{remark}{Remark}
\AddToHook{env/remark/begin}{\crefalias{theorem}{remark}}

\crefname{section}{Section}{Sections}
\crefname{appendix}{Appendix}{Appendices}
\crefname{theorem}{Theorem}{Theorems}
\crefname{proposition}{Proposition}{Propositions}
\crefname{corollary}{Corollary}{Corollaries}
\crefname{lemma}{Lemma}{Lemmas}
\crefname{remark}{Remark}{Remarks}
\crefname{example}{Example}{Examples}
\crefname{algorithm}{Algorithm}{Algorithms}
\crefname{figure}{Figure}{Figures}
\crefname{table}{Table}{Tables}

\allowdisplaybreaks

\DeclareMathOperator*{\argmax}{argmax}

\DeclareMathOperator*{\tr}{tr}
\DeclareMathOperator{\diam}{diam}
\DeclareMathOperator*{\spann}{span}
\DeclareMathOperator*{\supp}{supp}

\DeclareMathOperator{\id}{Id}

\DeclareMathOperator*{\argmin}{argmin}

\DeclareMathOperator{\W}{W}
\DeclareMathOperator{\GW}{GW}

\newcommand{\Pio}{\Pi_{\mathrm{o}}}

\newcommand{\XX}{\mathbb{X}}
\newcommand{\YY}{\mathbb{Y}}

\newcommand{\G}{\mathcal{G}}

\newcommand{\1}{\mathbb{1}}

\newcommand{\weakly}{\rightharpoonup}
\newcommand{\N}{\mathbb{N}}
\newcommand{\M}{\mathcal{M}}

\newcommand{\R}{\ensuremath{\mathbb{R}}}

\newcommand{\dx}{\,\mathrm{d}}

\newcommand{\p}{\mathcal{P}}

\newcommand{\sym}{\mathrm{sym}}

\newcommand{\GM}{\mathfrak{G\!M}}

\newcommand{\qW}{\mathrm{q}_{n}^{\W}}
\newcommand{\qGW}{\mathrm{q}_{n}^{\GW}}

\newcommand{\m}{\mathrm{m}}

\numberwithin{equation}{section}

\headers{Gromov--Wasserstein Quantization and Clustering}
{F. Beier, S. Eckstein}

\title{Gromov--Wasserstein Quantization and Clustering: Structure, Rates, and Algorithms}

\author{Florian Beier%
\thanks{Department of Mathematics, University of Tübingen, Germany,
(\email{florian.beier@uni-tuebingen.de}, \email{stephan.eckstein@uni-tuebingen.de}).%
\funding{FB is grateful for funding by the German Research Foundation
-- Project number 564105502. SE is grateful for support by the German Research Foundation through Project 553088969 as well as the Cluster of Excellence “Machine Learning --- New Perspectives for Science” (EXC 2064/1 number 390727645).}}
\and
Stephan Eckstein\footnotemark[1]%
}

\begin{document}

\maketitle

\begin{abstract}
Clustering is a fundamental class of data analysis techniques with the most important representatives being centroid-based methods like $k$-means. Such methods are strongly connected to quantization problems, which aim to approximate general probability measures with discrete ones. For example, $k$-means corresponds to quantization with respect to the Wasserstein distance. While Wasserstein quantization clusters points within a fixed space, this paper studies Gromov--Wasserstein (GW) quantization, which additionally aims at clustering the ambient geometry of the space. We show existence of solutions to the GW quantization problem and give a characterization that justifies an analogue to the $k$-means algorithm (Lloyd's algorithm) to approximate them numerically. We further calculate the quantization rate for usual Euclidean geometries that are used in the GW context, and relate it to standard Wasserstein quantization rates. Finally, numerical experiments show that GW quantization opens up many modeling possibilities beyond normal clustering methods (e.g., for geodesic distances of 3D shapes or structured pruning of neural networks) and that the introduced algorithm leads to useful numerical solutions with approximation quality often in line with theoretically optimal rates.
\end{abstract}

\section{Introduction}
Objects in data science are increasingly large, from data sets to neural networks and high-resolution measurements.
It is thus often desirable to suitably approximate 
these large objects with smaller ones, 
which are, 
for instance, 
more tractable computationally and easier to interpret. 
As many objects can be modeled as weighted sets of points, 
a common way to
achieve such a reduction in size is through clustering, 
which replaces the points
in each cluster by a single representative. 
This is precisely Wasserstein
quantization of a finitely supported measure, 
for which Lloyd's algorithm
is the standard method.
However, 
such a clustering approach, 
by definition, 
primarily cares about distances between points in a cluster and their representatives,
and thus only indirectly about downstream geometric properties 
of the resulting object as a whole. 
To capture such global structure directly,
we model objects as gauged measure spaces,
consisting of points, weights, 
\emph{and} a relational structure on the set of points (the gauge).
The Gromov--Wasserstein (GW) distance
is a type of optimal transport distance
that measures the overall minimal gauge-distortion
between two gauged measure spaces.
This makes the GW distance 
well suited for comparing objects 
by their intrinsic relational structure.

In this paper, 
we study the GW quantization problem 
for arbitrary gauged measure spaces.
In contrast to the Wasserstein quantization problem,
its objective
is the direct and global retention 
of the specified gauge
(i.e., the pairwise relationship between points) 
while reducing the object's size.
Our interest is
twofold. 
On the practical side, 
moving from the Wasserstein to the GW setting
affords considerable modeling freedom: an object may be encoded directly through
its pairwise relations, 
which need not form a metric 
(see \Cref{subsec:pruningnn} for one such example).
This extends quantization to settings where ordinary
clustering does not apply. 
For this flexibility to be practically useful, 
a natural goal is to lift Lloyd's algorithm,
the standard tool in the Wasserstein case, 
to the GW setting.
On the theoretical side, 
it raises structural
questions of independent interest, 
concerning the form of optimal GW quantizers
and whether the optimal quantization rate
matches the one in the Wasserstein setting.

\paragraph{Main contributions} 
Answering the previous motivation, 
the main contributions of this paper are:
\begin{itemize}
    \item We introduce the GW quantization problem for general gauged measure spaces, show that it admits a solution, and characterize it in \cref{thm:GW_quantization_existence_characterization}.

    \item In Euclidean spaces of dimension $d$, we establish two-sided bounds relating the GW
    quantization value to constant multiples of the Wasserstein one
    (\cref{prop:GW_quant_upper_bound},
    \cref{thm:lb_euclidean_gauges,thm:lower_and_upper_bound_scalar_product}), which
    yield the sharp $n^{-1/d}$ quantization rates
    (\cref{cor:GW_quant_rate_1,cor:GW_quant_rate_2}) for approximation with $n$ support points.
    \item Existence of Monge maps between an input endowed with the Euclidean scalar product and its quantization is shown in \cref{thm:scalar_product_partition_attainment}.
    
    \item We lift Lloyd's algorithm to the GW setting, casting the general problem as
    a conditional gradient method (\cref{alg:1}) 
    that, for a suitable specification, monotonically decreases the objective 
    and has stationary cluster points (\cref{thm:convergence}).
    
    \item We validate the approach on proof-of-concept experiments: quantizing 3D
    shapes (\Cref{subsec:exp-mesh}) and pruning neural networks
    (\Cref{subsec:pruningnn}), accelerating pairwise GW computations across many objects
    (\Cref{subsec:accelerated_pairwise}), and empirically confirming the quantization
    rates (\Cref{subsec:numerics_rates}).
\end{itemize}

\paragraph{Outline of the paper} The paper is structured as follows: In the remainder of this section, we discuss important related literature. In \cref{sec:OT}, we briefly discuss the classical Wasserstein quantization problem and its relation to clustering. In \cref{sec:gw}, we introduce the GW quantization problem and discuss basic properties related to existence and characterization of optimizers. \Cref{sec:euclidean} studies properties of GW quantization for Euclidean spaces and particularly the quantization rates are derived in this section. \Cref{sec:alg} studies the conditional gradient algorithm to (locally) solve the GW quantization problem, which is loosely inspired by Lloyd's algorithm. Finally, \cref{sec:numerics} contains the numerical experiments. Supplementary statements and proofs are outsourced to the appendix.

\subsection{Related Literature}\label{subsec:literature}

\paragraph{Quantization of probability measures}
Quantization of probability measures studies how to approximate arbitrary probability measures in Wasserstein distance with ones that have a predefined finite support size, see \cite{GL2000} for a monograph covering the topic. More broadly, this relates to signal processing, rate--distortion theory and vector quantization, see for instance \cite{gray1998quantization} and \cite{pages2015introduction}. For absolutely continuous measures on $\mathbb{R}^d$ with light enough tails, Zador's theorem \cite{zador1982asymptotic} shows that the quantization error decays at the rate $n^{-1/d}$. 
Naturally, optimal quantization has strong relations to statistical estimation of probability measures, see for instance \cite{canas2012learning}. 
Closely related, 
the approximation of measures
by point masses via discrepancies
(dithering)
and extensions thereof
have been studied both algorithmically
and through their connection to optimal transport
\cite{EGNS2021,graef2012quadrature,neumayer2021optimal,teuber2011dithering}.
Finally, quantization can be restricted to manifolds \cite{kloeckner2012approximation} or applied in general metric measure spaces \cite{aydin2026asymptotics}. We note that even in these cases, the ambient metric space is fixed and only the measure is approximated, whereas in GW quantization the ambient metric (or more generally the gauge) is approximated simultaneously.

\paragraph{Clustering and Lloyd's algorithm}
When the measure to be approximated is already finitely supported, Wasserstein quantization coincides with $k$-means clustering (cf.~\cref{sec:OT}), which is an NP-hard problem (see, e.g., \cite{aloise2009np,mahajan2012planar}). The most frequently employed algorithm to find a local solution is Lloyd's algorithm \cite{lloyd1982}, whose convergence is for instance studied in \cite{portales2025sequential, selim1984kmeans}. As convergence is only local, initialization of Lloyd's algorithm is often important \cite{arthur2007kmeanspp}. Variants of the algorithm exist, like ones where cluster sizes are prescribed (corresponding to fixed weights of the approximating measure) \cite{ng2000note}, and many of the core algorithmic steps reappear in algorithms for Wasserstein barycenters \cite{cuturi2014fast,lindheim2023simple}. 

\paragraph{GW distances}

Originally, 
GW distances were introduced by
Sturm \cite{sturm2006geometry} and M\'emoli \cite{memoli2011gromov}
in order to analyze
the space of metric measure spaces.
They are distinct natural extensions
of two equivalent Gromov--Hausdorff distance
formulations.
This paper focuses on M\'emoli's proposed GW distance
which measures the overall minimal distortion
required to go from one metric measure space to another.
Although, 
in general,
it amounts to numerically solving a 
non-convex and quadratic problem,
it has been used for a wide variety of practical tasks,
such as 
shape matching and classification \cite{BBS2022linear,memoli2011gromov,solomon2016entropic}, 
unsupervised translation \cite{alvarez2018gromov}, 
graph matching and node embedding \cite{XLZD2019}, 
multi-omics \cite{demetci2022scot},
and
particle dynamics \cite{B2023ssvm}.
Moreover, 
many theoretical insights into GW problems
and variations thereof 
have been obtained recently \cite{BHS21,delon2022gromov,DLV2024,memoli2024comparison,SejViaPey21,vayer2020fused,ZGMS2024}.
In particular,
when relaxing metrics to so-called gauges 
(symmetric square-integrable functions),
which we also rely on in this paper,
the space of gauged measure spaces endowed with 
the GW distance forms a complete metric space 
that offers a rich geometry 
\cite{sturm2023space}.
Furthermore, 
as it is related to the current work,
we particularly want to draw attention to
the recent work on semi-discrete GW \cite{RGK2023}, 
where one marginal is finitely supported.

Motivated by the poor tractability of solving the GW problem directly,
many relaxations with faster solvers have been proposed.
Examples of this are 
entropic regularization 
\cite{flamary2021pot,houry2026gromov,PCS2016,solomon2016entropic}, 
slicing approaches \cite{piening2025slicing,piening2026novel,sliced_gw}, 
low-rank and sample-based variants \cite{kerdoncuff2021sampled,scetbon2022linear} 
or iterative partitioning \cite{xu2019scalable}. 
In the context of low-dimensional Euclidean space 
equipped with the squared Euclidean norm,
the GW problem becomes more manageable 
and global solutions can be obtained in reasonable time \cite{ryner2023_globally}. 

\paragraph{GW quantization and related methods}
GW quantization is introduced in \cite{memoli2018sketching} for metric measure spaces. A focus is on a type of equivalence (called duality therein) between GW quantization and clustering. Notably, both the standard GW distance from \cite{memoli2011gromov} and Sturm's version \cite{sturm2006geometry} are treated, which lead to different results. In contrast to \cite{memoli2018sketching}, the current work aims to establish results closer to classical Wasserstein quantization theory for general GW objectives. In particular, existence and characterization of optimal solutions, Lloyd-type iterations and general gauges are not treated in \cite{memoli2018sketching}. The strongest relation between the duality established in \cite{memoli2018sketching} and our work is given by the bounds in \cref{sec:euclidean}, where particularly some of the upper bounds could be derived from \cite{memoli2018sketching}. Nevertheless, while \cite{memoli2018sketching} focuses on bounds involving universal constants, \cref{sec:euclidean} focuses on asymptotic rates, and importantly the main results in this section involve constants which depend on the measure which is quantized. 
In \cite{vanassel2025distributional}, a particular empirical GW quantization problem is studied in which all cluster centers have the same mass. The goal of \cite{vanassel2025distributional} is to unify both clustering and dimensionality reduction via GW quantization, which is achieved by considering GW quantization on $\mathbb{R}^p$ with quantized gauges being restricted to arise from points in $\mathbb{R}^d$ with $d < p$. This is in contrast to our paper, where both initial and quantized gauges are not restricted to have a euclidean structure.
The quantized GW approach of \cite{chowdhury2021quantized} treats the use of any GW quantization (e.g., also suboptimal ones obtained through normal clustering) to speed up the computation of GW distances. 
Calculating GW barycenters \cite{BB2024tangential,BBS2023multi,chowdhury2020gromov}, especially with fixed barycenter size \cite{PCS2016}, is a form of GW quantization (when calculating the barycenter of a single space), and some of the algorithmic steps therein relate to the ones in this paper (cf.~the discussion after \cref{thm:pointwise_lb_gauge}). Related methods further include coupling a graph to a small template, which yields graph partitioning methods \cite{chowdhury2021generalized}, GW-based dictionary learning \cite{vincent2021online,xu2020gromov}, and the semi-relaxed GW divergence \cite{vincent2022semirelaxed}, which includes one free marginal as in the GW quantization problem. More broadly related approximation methods include graph coarsening \cite{bravohermsdorff2019unifying,chen2023gromov,loukas2019graph,taveras2026gromov}.

\subsection{Notation}
\begin{description}[
        font=\normalfont,
        align=left,
        labelwidth=4.1em,
        labelsep=1.5em,
        leftmargin=11em,
        itemsep=0.4mm]
    \item[{$[n]$}] index set $\{1,\dots,n\}$, for $n \in \N$.
    \item[{$\langle\cdot,\cdot\rangle, \|\cdot\|$}] Euclidean scalar product and norm on $\R^d$.
    \item[{$\M_+(X)$}] finite non-negative measures on $X$.
    \item[{$\p(X)$}] probability measures on $X$.
    \item[{$\p^{(2)}(X)$}] probability measures with finite second moment (cf.\ \cref{sec:OT}).
    \item[{$\p_n(X)$}] probability measures supported on at most $n$ points (cf.\ \cref{sec:OT}).
    \item[{$\supp(\xi)$}] support of $\xi\in\M_+(X)$, the smallest closed set of full $\xi$-measure.
    \item[{$\xi\vert_A$}] restriction of $\xi$ to a measurable $A\subset X$, i.e.\ $\xi\vert_A(B)=\xi(A\cap B)$.
    \item[{$\diam(A)$}] diameter $\sup_{x,x'\in A}d_X(x,x')$ of $A\subset X$.
    \item[{$\xi_k\weakly\xi$}] weak convergence of finite measures.
    \item[{$\Phi_\#\xi$}] push-forward of $\xi$ by measurable $\Phi\colon X\to Y$, i.e.\ $(\Phi_\#\xi)(\cdot)=\xi(\Phi^{-1}(\cdot))$.
    \item[{$P_X,\ P_Y$}] canonical projections of $X\times Y$ onto $X$ and $Y$.
    \item[{$\Pi(\xi,\upsilon)$}] couplings between $\xi$ and $\upsilon$.
    \item[{$\W_2$}] Wasserstein distance of order 2 (cf.\ \cref{sec:OT}).
    \item[{$\GM$}] gm-spaces up to homomorphic equivalence (cf.\ \cref{sec:gw}).
    \item[{$\G^{(n)}_{\sym}$}] symmetric matrices $G\in\R^{n\times n}$, called gauge matrices.
    \item[{$\GM_n$}] gm-spaces on at most $n$ points, i.e.\ $([n],G,\upsilon)$ with $G\in\G^{(n)}_{\sym}$ and $\upsilon\in\p([n])$.
    \item[{$\GW_2$}] Gromov--Wasserstein distance of order 2 (cf.\ \cref{sec:gw}).
    \item[{$\Pio(\XX,\YY)$}] set of optimal plans between $\XX$ and $\YY$.
    \item[{$\Pi(\xi,*_n)$}] couplings $\pi\in\p(X\times[n])$ with $(P_X)_\#\pi=\xi$.
    \item[{$\pi_i$}] $i$-th slice $\pi(\cdot\times\{i\})\in\M_+(X)$ of a coupling $\pi\in\p(X\times[n])$.
    \item[{$\upsilon_i$}] mass $\upsilon(\{i\})$ of $\upsilon\in\p([n])$ at $i\in[n]$.
    \item[{$(V_i)_{i=1}^n$}] Voronoi partition of $X$ with respect to cost functions $(c_i)_{i=1}^n$ (cf.\ \cref{eq:voronoi_partition}).
    \item[{$\m(\pi)$}] normalized mean $\tfrac{1}{\pi(X)}\int_X x\dx\pi(x)$ of $\pi\in\M_+(X)$ with $\pi(X)>0$.
    \item[{$\m_\xi(V)$}] block mean $\m(\xi\vert_V)$ for measurable $V\subset X$ with $\xi(V)>0$.
    \item[{$M_\xi$}] second-moment matrix $\int_{\R^d}xx^\top\dx\xi(x)$ of $\xi\in\p^{(2)}(\R^d)$.
    \item[{$\qW, \qGW$}] Wasserstein and GW quantization values (\cref{eq:W_quantization,eq:GW_quant}).
    \item[{$Q_{\GW}$}] objective for the GW quantization problem (cf.\ \cref{sec:gw}).
    \item[{$R$}] reduced objective $\inf_{G}Q_{\GW}(G,\cdot)$ of the GW quantization problem (cf.\ \cref{sec:alg}).
\end{description}

\section{Wasserstein Quantization and Clustering}\label{sec:OT}
In the following, 
we briefly review Wasserstein quantization
of probability measures 
and refer to \cite{GL2000, aydin2026asymptotics, portales2025sequential} 
for more background.

Let $(X,d_X)$ be a Polish space, 
and denote by $\M_+(X)$ and $\p(X)$ 
the spaces of finite
non-negative and of probability 
measures on the Borel-$\sigma$-algebra of $X$
induced by $d_X$, respectively.
For Polish spaces $X,Y$, 
let $P_X\colon X\times Y\to X$ 
and $P_Y\colon X\times Y\to Y$
denote the canonical projections.
For
$\xi\in\p(X)$ 
and $\upsilon\in\p(Y)$, 
the set of \emph{couplings}
or \emph{transport plans} 
of $\xi$ and
$\upsilon$ is
\[
\Pi(\xi,\upsilon)\coloneqq\{\pi\in\p(X\times Y):(P_X)_\#\pi=\xi,\ (P_Y)_\#\pi=\upsilon\}.
\]
The \emph{Wasserstein space (of order 2)}
is given by the set of measures 
with finite second moment
\begin{align*}
\p^{(2)}(X) &\coloneqq 
\{\xi \in \p(X) : \int_{X} d_X^2(x_0,x) \dx \xi(x) < \infty\},
\quad x_0 \in X \text{ arbitrary},
\end{align*}
endowed with the \emph{Wasserstein distance (of order 2)}
\begin{align*}
    \W_2(\xi,\upsilon) = \inf_{\pi \in \Pi(\xi,\upsilon)} \biggl(
    \int_{X \times X} d_X^2(x,y) \dx \pi(x,y)
    \biggr)^{\frac{1}{2}},
    \qquad \xi,\upsilon \in \p^{(2)}(X).
\end{align*}
In order to define 
the Wasserstein quantization problem,
we require
the set of measures on $X$ with at most $n$ support points,
denoted by
\[
\p_n(X) \coloneqq 
\biggl\{\sum_{i=1}^n \upsilon_i \delta_{y_i}
: 
y_i \in X, 
\upsilon_i \geq 0, i=1,\dotsc,n,
\sum_{i=1}^n \upsilon_i = 1\biggr\}.
\]
Let $n \in \N$ and $\xi \in \p^{(2)}(X)$,
the \emph{Wasserstein quantization problem} reads
\begin{equation}\label{eq:W_quantization}
\qW(\xi)
\coloneqq 
\inf_{\upsilon \in \p_n(X)}
\W_2(\xi,\upsilon).
\end{equation}
While more general distances can be employed, 
in this paper we focus on just $\W_2$ 
(and later the GW distance of order 2).
For convex $X \subset \R^d$, existence of \emph{quantizations},
i.e.\ solutions $\upsilon^*$
to \cref{eq:W_quantization}, 
is guaranteed by
\cite[Thm.~4.12]{GL2000}.
If further 
$\xi = \frac{1}{N}\sum_{i=1}^N \delta_{x_i} \in \p(X)$ is an empirical measure,
the quantization problem
\cref{eq:W_quantization} is precisely 
Euclidean $k$-means (minimum-sum-of-squares
clustering) with $k=n$, see \cite{GL2000}, which is NP-hard \cite{aloise2009np}.

The interest of solving \cref{eq:W_quantization} is twofold.
Firstly,
a solution $\upsilon = \sum_{i=1}^n \upsilon_i \delta_{y_i} \in \mathcal{P}_n(X)$ 
provides a quantization (i.e.\ a smaller approximate representation) of $\xi$ 
according to the Wasserstein distance $\W_2$.
Secondly,
any $\pi \in \Pi(\xi, \upsilon)$,
that solves the inner problem
provides a clustering of $\xi$ according to $d_X$.
More precisely, 
the measures $\pi_i \coloneqq \pi(\cdot \times \{y_i\})$ 
describe the part of the measure $\xi$ which belongs to the $i$-th cluster with center $y_i$. 
With this in mind, 
it becomes clear that we can rewrite
\begin{equation}\label{eq:qW_as_min_QW}
\qW(\xi)^2
=
\inf_{\substack{
(y_i)_{i=1}^n \subset X
\\
(\pi_i)_{i=1}^n \subset \M_+(X), \sum_{i=1}^n \pi_i = \xi
}}
\sum_{i=1}^n \int_X d_X^2(x,y_i) \dx \pi_i(x).
\end{equation}
Minimizing out the $(\pi_i)_{i=1}^n$ by assigning each point to a nearest center,
\cref{eq:qW_as_min_QW} 
can be reduced 
to the $n$-point covering form
\begin{equation}\label{eq:n-center-covering-radius}
\qW(\xi)^2
= \inf_{\substack{Y\subset X\\ |Y|\le n}} \int_X d_X^2(x,Y)\,\dx\xi(x),
\qquad d_X(x,Y) \coloneqq \min_{y\in Y} d_X(x,y).
\end{equation}

Lloyd's algorithm \cite{lloyd1982} is based on an alternating optimization of
$(y_i)_{i=1}^n$ and $(\pi_i)_{i=1}^n$.
Optimizing over one parameter and keeping the other one fixed
allows for closed form solutions.
The first insight in this regard is that,
given $(y_i)_{i=1}^n$, 
$\pi_i$ is concentrated on 
$\{x \in X : d_X(x,y_i) = \min_{j=1,\dotsc,n} d_X(x,y_j)\}$,
which gives rise to the importance of 
so-called Voronoi partitions of the space $X$. 
Conversely, the second insight is that given $\pi_i$, 
the $y_i$ are simply given as suitable means of the parts of $\xi$ corresponding to $\pi_i$. 
For $X \subset \R^d$ convex 
and the Euclidean distance 
$d_X(\cdot, \cdot) = \|\cdot - \cdot\|$, 
this is just the standard arithmetic mean,
i.e.\ 
\cref{eq:qW_as_min_QW} 
simplifies further to
\begin{equation}\label{eq:W_quant_as_partitioning_problem}
\qW(\xi)^2
=
\inf_{
\substack{
V_1,\dotsc,V_n \subset X \text{ disjoint}
\\
\bigcup_{i=1}^n V_i = X
}
}
\sum_{i=1}^n \int_{V_i} 
\|x - 
\m_\xi(V_i)
\|^2 \dx \xi(x),
\qquad
\m_\xi(V_i) \coloneqq 
\frac{1}{\xi(V_i)}\int_{V_i} x \dx \xi(x)
\end{equation}
with the convention that
$\m_\xi(V_i) \in X$ is set arbitrarily for $\xi(V_i) = 0$.
We derive analogues to these properties for GW quantization in \cref{sec:gw}, which are the basis for the algorithmic ideas in \cref{sec:alg}.  

Regarding approximation quality, Zador's theorem characterizes the asymptotic rate of approximation, 
i.e.\ the behavior of the value 
$\qW(\xi)$ 
for $n\rightarrow \infty$. 
More precisely, consider $\xi \in \mathcal{P}(\mathbb{R}^d)$ 
with $\int \|x\|^{2+\delta} \dx \xi(x) < \infty$ 
for some $\delta > 0$ 
and assume that $\xi$ contains an absolutely continuous part 
with non-zero mass. 
Then, Zador's theorem \cite[Thm.~6.2]{GL2000} yields $\lim_{n\rightarrow \infty} n^{1/d} \qW(\xi) \in (0, \infty)$.
We exploit this in \cref{sec:euclidean}
to show similar rates of convergence 
for GW quantization problems 
in common Euclidean settings as well.

\section{
Gromov--Wasserstein Quantizations:
Existence and Characterization
}\label{sec:gw}

The GW distance 
\cite{memoli2011gromov,sturm2023space}
is a type of optimal transport problem
which aligns and compares based on
intrinsic (dis-)similarities
as opposed to a given ambient metric.
After giving a brief summary and fixing the notation,
we formulate and analyze the GW quantization problem
below.

A \emph{gauged measure space} (gm-space)
is a triple
$(X,g,\xi)$,
where
$X$
is a Polish space,
$\xi \in \p(X)$ is a probability measure
and $g \in L^2_\sym(\xi \otimes \xi)$
is an associated \emph{gauge},
where
\[
L^2_\sym(\xi \otimes \xi) 
\coloneqq 
\{
g \in L^2(\xi \otimes \xi)
:
g \text{ is symmetric}
\}.
\]
We denote the set of gm-spaces by $\GM$.
For two gm-spaces 
$\XX = (X,g,\xi) \in \GM$, 
$\YY = (Y,h,\upsilon) \in \GM$,
the \emph{GW distance (of order 2)} is given as
\begin{equation}
    \label{eq:GW}
    \GW_2(\XX,\YY) 
    \coloneqq
    \inf_{\pi \in \Pi(\xi,\upsilon)}
    \Bigl(
    \iint_{(X \times Y)^2} 
    \lvert
    g(x,x') - h(y,y')
    \rvert^2 
    \dx \pi(x,y) \dx \pi(x',y') 
    \Bigr)^{\frac{1}{2}}.
\end{equation}
The infimum in \cref{eq:GW} is always attained
and the set of solutions is denoted as $\Pio(\XX,\YY)$.

The GW distance is only a pseudometric on gm-spaces.
More precisely,
$\GW_2(\XX,\YY)$ may vanish for distinct
$\XX = (X,g,\xi)$ and $\YY = (Y,h,\upsilon)$.
The gm-spaces $\XX,\YY$ with $\GW_2(\XX,\YY) = 0$
are called \emph{homomorphic}
which admits an exact characterization,
for which we refer to \cite{sturm2023space}.
For our purposes only the following sufficient condition
is needed.
If there exists a Borel-measurable map
$\Phi \colon X \to Y$
with
$\upsilon = \Phi_\# \xi$
and
$h(\Phi(x),\Phi(x')) = g(x,x')$
for $\xi \otimes \xi$-almost every $(x,x')$,
then $\XX$ and $\YY$ are homomorphic.
We call such a map $\Phi$ a \emph{homomorphism} from $\XX$ to $\YY$.
By identifying homomorphic gm-spaces,
$\GW_2$ becomes a complete metric on the resulting quotient of $\GM$.
Throughout,
we write $\XX \in \GM$ for a gm-space as well as for its
(homomorphism) equivalence class.

Let $n \in \N$ and set $[n] \coloneqq \{1,\dotsc,n\}$.
Following \cite{memoli2018sketching} for metric measure spaces, we want to consider the 
natural generalization of the
Wasserstein quantization problem \cref{eq:W_quantization}
to the GW case
for which we consider
gm-spaces $\YY = (Y,h,\upsilon) \in \GM$
whose space $Y$ 
has
$n \in \N$ elements.
As renaming the elements constitutes a homomorphism,
any such gm-space $\YY$ always admits a representative
whose space is $[n]$.
Moreover,
we identify the gauges $h$ for such spaces
as \emph{gauge matrices}
\begin{align*}
G \in \G^{(n)}_{\sym} &\coloneqq \{G \in \R^{n \times n} : G \text{ symmetric}\},
\end{align*}
via $h(i,i') \coloneqq  G_{i,i'}$, $i,i' \in [n]$.
The space of gm-spaces that are supported on at most
$n$ points
$\GM_{n} \subset \GM$
is given by
\begin{align*}
\GM_{n}
&\coloneqq
\bigl\{ 
 ([n],G,\upsilon)
:
G \in \G^{(n)}_{\sym},
\upsilon \in \p([n])
\bigr\}.
\end{align*}
Let $n \in \N$
and 
$\XX \in \GM$ 
be an arbitrary
gm-space.
We consider the 
\emph{GW quantization problem}
\begin{equation}\label{eq:GW_quant}
    \qGW(\XX) 
    \coloneqq 
    \inf_{\YY \in \GM_{n}}
    \GW_2(\XX,\YY).
\end{equation}
Since GW is invariant under homomorphisms,
so is $\qGW$.
Before we show existence of solutions and characterize them, 
we make the following observation 
in terms of the support size
of elements in $\GM_n$.

\begin{proposition}\label{prop:rep-with-larger-cardinality}
Let $\YY \in \GM_n$.
There exists a representative $\tilde{\YY} = ([n],\tilde{G},\tilde{\upsilon})$
of $\YY$ with fully supported measure $\tilde{\upsilon} \in \p([n])$.
\end{proposition}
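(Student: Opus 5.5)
We construct $\tilde{\YY}$ explicitly by splitting atoms of $\YY$ and then exhibit a homomorphism from $\tilde{\YY}$ to $\YY$. By the sufficient condition for homomorphic gm-spaces recalled above, this gives $\GW_2(\YY,\tilde{\YY}) = 0$, so $\tilde{\YY}$ represents the same class.

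Let $\YY = ([n],G,\upsilon)$ and let $S \coloneqq \{i \in [n] : \upsilon_i > 0\}$, with $k \coloneqq |S| \geq 1$ since $\upsilon$ is a probability measure. If $k = n$, take $\tilde{\YY} = \YY$. Otherwise the zero-mass points $[n]\setminus S$ are removed (they carry no mass) and $n - k$ copies of one charged point are added, so that $n$ points all carry positive mass.

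\textbf{Step 1 (surjection).} Fix $i_0 \in S$ and choose a map $\Phi\colon [n] \to [n]$ with the following properties:
\begin{itemize}
\item $\Phi$ maps $[n]$ onto $S$;
\item $\Phi$ restricted to some $k$-element subset of $[n]$ is a bijection onto $S$;
\item the remaining $n-k$ indices are sent to $i_0$.
\end{itemize}
Then $\Phi^{-1}(i_0)$ has $m \coloneqq n-k+1$ elements, and every other $i \in S$ has exactly one preimage.

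\textbf{Step 2 (measure).} Define $\tilde{\upsilon}_j \coloneqq \upsilon_{\Phi(j)}/|\Phi^{-1}(\Phi(j))|$. Each $\tilde{\upsilon}_j$ is strictly positive because $\Phi(j)\in S$. For $i \in S$,
\[
(\Phi_\#\tilde{\upsilon})(\{i\}) = \sum_{j\in\Phi^{-1}(i)} \frac{\upsilon_i}{|\Phi^{-1}(i)|} = \upsilon_i,
\]
and for $i \notin S$ the pushforward mass is $0 = \upsilon_i$. Hence $\Phi_\#\tilde{\upsilon} = \upsilon$.

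\textbf{Step 3 (gauge).} Define $\tilde{G}_{j,j'} \coloneqq G_{\Phi(j),\Phi(j')}$. This matrix is symmetric because $G$ is, and it satisfies $\tilde G_{j,j'} = G_{\Phi(j),\Phi(j')}$ for every pair $(j,j')$, so in particular $\tilde\upsilon\otimes\tilde\upsilon$-almost everywhere.

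\textbf{Conclusion.} Steps 2 and 3 show that $\Phi$ is a homomorphism from $\tilde{\YY} = ([n],\tilde G,\tilde\upsilon)$ to $\YY$. Therefore $\tilde{\YY}$ is a representative of $\YY$ with fully supported $\tilde{\upsilon}$.

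All steps are routine. The only point needing care is to check that the homomorphism goes in the permitted direction, from $\tilde{\YY}$ to $\YY$. Points of $\YY$ outside $S$ are not in the image of $\Phi$, which is allowed because they have $\upsilon$-mass zero and the pushforward identity still holds there.
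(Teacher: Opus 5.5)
Your proof is correct and rests on the same idea as the paper's: duplicate a positive-mass point, let the copies inherit its row and column of the gauge matrix and share its mass, and exhibit the collapsing map as a homomorphism onto $\YY$. The only difference is packaging. The paper first restricts to the support, renames it to $[m]$, and then splits one point into two at a time by induction, chaining homomorphic equivalences. Your single map $\Phi$ does all of this at once by splitting $i_0$ into $n-k+1$ equal-mass copies, so no chaining is needed.
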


The proof,
presented in \cref{subapp:proof_of_3_1},
repeatedly duplicates a positive-mass point 
of $\YY$ into
two copies that inherit its gauge values and share its mass.
Each such split
leaves the gm-space unchanged up to homomorphism while enlarging the support,
until all $n$ points carry positive mass.

\cref{prop:rep-with-larger-cardinality}
allows us to always
consider fully supported representatives of 
any elements in $\GM_{n}$
which we will do throughout.
Similar to \cref{sec:OT},
we encounter
transport plans
of the (semi-discrete) form
$\pi \in \p(X \times [n])$,
where $X$ is some Polish space.
In this setting, 
we use the notation
$
\pi_i \coloneqq \pi(\cdot \times \{i\}) \in \M_+(X),
$
as well as 
$\upsilon_i\coloneqq\upsilon(\{i\})$ 
for any $\upsilon\in\p([n])$,
$i \in [n]$.
With this notation,
the marginal of $\pi$ on $[n]$
can be expressed as
\[
(P_{[n]})_\# \pi 
= \pi(X \times \cdot)
= \sum_{i=1}^n \pi(X \times \{i\})
\cdot \delta_{i}
= \sum_{i=1}^n \pi_i(X) 
\cdot \delta_{i}.
\]
Furthermore, 
the marginal on $X$ can be expressed as
\[
(P_X)_\# \pi
= \pi(\cdot \times [n])
= \sum_{i=1}^n \pi_i.
\]
For $\xi \in \p(X)$, 
we set 
$
\Pi(\xi, *_n) \coloneqq \{\pi \in \p(X \times [n]) : (P_X)_\# \pi = \xi\}.
$
Since the discrete space is fixed as $[n]$,
the measure positions of $(P_{[n]})_\# \pi$, $\pi \in \Pi(\xi, *_n)$,
do not vary.
Instead their geometry varies 
via the associated gauge matrix.
This allows us to uniquely identify
$\pi \in \Pi(\xi, *_n)$
by
$(\pi_i)_{i=1}^n \subset \M_+(X)$,
$\sum_{i=1}^n \pi_i = \xi$.
This stands in contrast to
the Wasserstein setting,
where the measure positions vary directly.
Throughout,
we will frequently make use of this
one-to-one correspondence 
in the GW setting,
that is, we frequently consider $\pi = (\pi_i)_{i=1}^n$.

We turn our attention 
to showing existence of GW quantizations.
We follow a similar alternating approach 
as the one that Lloyd's algorithm for
the Wasserstein case is based on.
That is, 
we analyze
the minimization with respect to the gauge matrix
and the inner GW transport
in the quantization problem \cref{eq:GW_quant}
disjointly at first.
For $\XX = (X,g,\xi)$, 
we set
\[
Q_{\GW}(G,(\pi_i)_{i=1}^n)
\coloneqq
\sum_{i,i'=1}^n \iint_{X^2} (g(x,x') - G_{i,i'})^2 \dx \pi_i(x) \dx \pi_{i'}(x'),
\]
so that the GW quantization problem 
\cref{eq:GW_quant}
can be written as
\begin{equation}\label{eq:qGW_as_min_Q_GW}
\qGW(\XX)^2 = \inf_{
\substack{
G \in \G^{(n)}_{\sym}
\\
(\pi_i)_{i=1}^n \subset \M_+(X), \sum_{i=1}^n \pi_i = \xi 
}
}
Q_{\GW}(G,(\pi_i)_{i=1}^n).
\end{equation}
In the Wasserstein case,
the minimization with respect to the measure positions
admits closed form.
Analogously for the GW case, 
we obtain a similar result
for the minimization 
with respect to the gauge matrix.

\begin{theorem}\label{thm:pointwise_lb_gauge}
    Let $\XX = (X, g, \xi) \in  \GM$ be arbitrary
    and fix
    $\pi \in \p(X \times [n])$ 
    with $(P_X)_\# \pi = \xi$
    and $\pi_i(X) >0$ 
    for all $i \in [n]$.
    Then 
    $
    \inf_{
    G \in \G^{(n)}_{\sym}
    }
    Q_{\GW}(G,(\pi_i)_{i=1}^n)$
    is uniquely solved by 
    \begin{equation}\label{eq:optimal_G_for_GW_quant}
    \hat{G} \in \G_{\sym}^{(n)},
    \qquad 
    \hat{G}_{i,i'}
    \coloneqq
    \frac{1}{\pi_i(X) \pi_{i'}(X)}
    \iint_{X^2} g(x,x')
    \dx \pi_{i}(x)
    \dx \pi_{i'}(x'),
    \quad
    i,i' \in [n].
    \end{equation}
    Set 
    $\YY = ([n],G,\upsilon)$ for $G \in \G_{\sym}^{(n)}$ arbitrary
    and $\hat{\YY} \coloneqq ([n],\hat{G},\upsilon)$,
    where $\upsilon = (P_{[n]})_\# \pi$.
    If additionally 
    $\pi \in \Pio(\XX,\YY)$,
    then the following estimate holds
    \begin{equation}\label{eq:pointwise_lb}
    \GW_2^2(\XX,\YY) \geq
    \GW_2^2(\XX,\hat{\YY})
    + \|G - \hat{G}\|_{L^2(\upsilon \otimes \upsilon)}^2.
    \end{equation}
\end{theorem}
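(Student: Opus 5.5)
For fixed $\pi$, the map $G \mapsto Q_{\GW}(G,(\pi_i)_{i=1}^n)$ is a separable quadratic function of the entries of $G$. The plan is to exploit this directly through a bias--variance (Pythagoras) decomposition in $L^2(\pi_i\otimes\pi_{i'})$ for each block $(i,i')$.

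Fix $i,i'\in[n]$ and write $m_{i,i'} \coloneqq \pi_i(X)\pi_{i'}(X) > 0$. Since $g \in L^2(\xi\otimes\xi)$ and $\pi_i \le \xi$, the function $g$ is integrable against $\pi_i\otimes\pi_{i'}$, so $\hat G_{i,i'}$ is well defined. Moreover, $\hat G$ is symmetric because $g$ is symmetric (swap the variables by Fubini). Expanding the square around $\hat G_{i,i'}$ and using that, by definition of $\hat G_{i,i'}$,
\[
\iint_{X^2} \bigl(g(x,x')-\hat G_{i,i'}\bigr)\dx\pi_i(x)\dx\pi_{i'}(x') = 0,
\]
the cross term vanishes. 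This gives
\begin{equation*}
\iint_{X^2} (g - G_{i,i'})^2 \dx\pi_i\dx\pi_{i'}
= \iint_{X^2} (g-\hat G_{i,i'})^2\dx\pi_i\dx\pi_{i'} + m_{i,i'}\,(G_{i,i'}-\hat G_{i,i'})^2 .
\end{equation*}
Summing over $i,i'$ and noting that $m_{i,i'} = \upsilon_i\upsilon_{i'}$ for $\upsilon = (P_{[n]})_\#\pi$ yields the identity
\begin{equation*}
Q_{\GW}(G,\pi) = Q_{\GW}(\hat G,\pi) + \|G-\hat G\|^2_{L^2(\upsilon\otimes\upsilon)} .
\end{equation*}
Because every $m_{i,i'}>0$, the weighted norm $\|\cdot\|_{L^2(\upsilon\otimes\upsilon)}$ is a genuine norm on $\G^{(n)}_\sym$. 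Hence $\hat G$ is the unique minimizer over symmetric matrices.

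For the estimate \cref{eq:pointwise_lb}, assume $\pi\in\Pio(\XX,\YY)$. Then $\GW_2^2(\XX,\YY) = Q_{\GW}(G,\pi)$. Since $\pi$ is also a coupling of $\xi$ and $\upsilon$, it is admissible for $\GW_2(\XX,\hat\YY)$, so $Q_{\GW}(\hat G,\pi)\ge \GW_2^2(\XX,\hat\YY)$. Inserting this into the identity above gives \cref{eq:pointwise_lb}.

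No step is a real obstacle. The only points needing care are the integrability of $g$ against each $\pi_i\otimes\pi_{i'}$, which follows from $\pi_i\otimes\pi_{i'}\le\xi\otimes\xi$ together with Cauchy--Schwarz on the finite measure, and the positivity of the masses. The latter is needed both for $\hat G$ to be defined and for the uniqueness of the minimizer.
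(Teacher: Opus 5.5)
Your proposal is correct and follows essentially the same route as the paper. Both arguments use a per-block $L^2$-mean (Pythagoras) decomposition, which the paper packages as \cref{lem:L2_bary}; summing it gives $Q_{\GW}(G,\pi)=Q_{\GW}(\hat G,\pi)+\|G-\hat G\|^2_{L^2(\upsilon\otimes\upsilon)}$, and then $\pi\in\Pi(\xi,\upsilon)$ is used as an admissible coupling for $\hat\YY$. The only cosmetic difference is that the paper minimizes over all of $\R^{n\times n}$ and then observes that the unconstrained minimizer is symmetric, whereas you read off uniqueness over symmetric matrices directly from the identity.
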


The proof,
presented in \cref{subapp:proof_of_3_2},
hinges analogously to the Wasserstein setting
on the identification 
of the $L^2$ mean.
We note that the optimal gauge $\hat G$ 
coincides with the barycenter-matrix update 
of the GW barycenter algorithm 
in \cite{PCS2016} 
for a single input space and that the same formula also occurs in \cite{chen2023gromov} in a framework related to graph coarsening.

For cost functions $(c_i)_{i=1}^n$ on $X$, the associated \emph{Voronoi partition}
$(V_i)_{i=1}^n$ of $X$ is defined as
\begin{equation}\label{eq:voronoi_partition}
V_i \coloneqq T^{-1}(\{i\}), \qquad
T(x) \coloneqq \min\bigl\{i\in[n] : c_i(x) = \min\nolimits_{j\in[n]} c_j(x)\bigr\}.
\end{equation}
In contrast to the Wasserstein case,
the minimization of
$Q_{\GW}(G,\cdot)$
over $\Pi(\xi, *_n)$
for a fixed gauge matrix
\smash{$G \in \G_\sym^{(n)}$}
does not admit closed form.
This is due to the 
fact that the problem is 
quadratic and non-convex.
Nevertheless,
we can show
the following characterization.

\begin{theorem}\label{thm:Q_GW_measure_min_characterization}
    Let $\XX = (X,g,\xi) \in \GM$ be arbitrary
    and $G \in \G_{\sym}^{(n)}$ be fixed.
    Then $Q_{\GW}(G,\cdot)$ attains its
    minimum over $\Pi(\xi, *_n)$.
    Let 
    $(\hat{\pi}_{i})_{i=1}^n \subset \M_+(X)$
    with $\sum_{i=1}^n \hat{\pi}_i = \xi$
    be any minimizer.
    Then, 
    for every $i \in [n]$,
    the measure $\hat{\pi}_i$ 
    is concentrated on
    \[
    A_i \coloneqq \{x \in X : c_i^{(\hat{\pi})}(x) = \min_{j} c_j^{(\hat{\pi})}(x)\}, ~~ \text{ where } c_i^{(\hat{\pi})}(x) \coloneqq \sum_{i'=1}^n \int_{X} (g(x,x') - G_{i,i'})^2 \dx \hat{\pi}_{i'}(x').
    \]
    If additionally $\xi(A_i \cap A_j) = 0$ for all $i \neq j$,
    then $\hat{\pi}_i = \xi \vert_{V_i}$, $i \in [n]$,
    where $(V_i)_{i=1}^n$ is the Voronoi partition of $X$
    with respect to $(c_i^{(\hat{\pi})})_{i=1}^n$.
\end{theorem}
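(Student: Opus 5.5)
Existence is shown by the direct method on $\Pi(\xi,*_n)$ with the weak topology, and the concentration statement by a first-order (mass-transfer) variation around a minimizer.

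\emph{Existence.} Since $(P_X)_\#\pi=\xi$ is fixed and $[n]$ is finite, $\Pi(\xi,*_n)$ is tight. It is also weakly closed, hence weakly compact by Prokhorov. The difficulty is that $g$ is only in $L^2(\xi\otimes\xi)$ and not continuous, so weak lower semicontinuity of $\pi\mapsto Q_{\GW}(G,\pi)$ is not automatic. To avoid this, I work with densities. Every $\pi_i\le\xi$, so $\pi_i=f_i\xi$ with $0\le f_i\le 1$ and $\sum_i f_i=1$ $\xi$-a.e. The objective becomes
\[
\sum_{i,i'}\iint (g(x,x')-G_{i,i'})^2 f_i(x)f_{i'}(x')\dx\xi(x)\dx\xi(x'),
\]
and the feasible set of $(f_i)$ is convex and weak-$*$ compact in $L^\infty(\xi)^n$. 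If $f^k\to f$ weak-$*$, then $f^k_i\otimes f^k_{i'}\to f_i\otimes f_{i'}$ weak-$*$ in $L^\infty(\xi\otimes\xi)$: test against products $a(x)b(x')$ with $a,b\in L^1$, use density of their span in $L^1(\xi\otimes\xi)$, and the uniform bound $\|f_i^k\otimes f_{i'}^k\|_\infty\le1$. Since $(g-G_{i,i'})^2\in L^1(\xi\otimes\xi)$, the objective is weak-$*$ continuous, and a minimizer exists. Checking this continuity carefully is the main technical point of the existence part.

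\emph{Concentration.} Let $\hat\pi$ be a minimizer. For $i\neq j$ and a measurable $B$, move mass $\varepsilon\hat\pi_i|_B$ from cluster $i$ to cluster $j$. By symmetry of $g$ and $G$, the first variation of $Q_{\GW}$ in direction $\eta=(\eta_k)$ is $2\sum_k\int c_k^{(\hat\pi)}\dx\eta_k$. The second-order term is bounded by $\varepsilon^2$ times an $L^1$ quantity, finite because $(g-G)^2\in L^1(\xi\otimes\xi)$. Minimality then gives
\[
\int_B \bigl(c_j^{(\hat\pi)}-c_i^{(\hat\pi)}\bigr)\dx\hat\pi_i\ge0 \quad\text{for all measurable } B.
\]
Hence $c_i^{(\hat\pi)}\le c_j^{(\hat\pi)}$ $\hat\pi_i$-a.e. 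Note that $c_k^{(\hat\pi)}(x)\le\int(g(x,x')-G_{k,\cdot})^2\dots$ is finite for $\xi$-a.e.\ $x$ by Fubini. Taking the union over the finitely many $j$ shows that $\hat\pi_i$ is concentrated on $A_i$. The main obstacle is making the variation rigorous when the $c_k^{(\hat\pi)}$ may be unbounded. I handle it by restricting $B$ to sublevel sets $\{c_i^{(\hat\pi)}+c_j^{(\hat\pi)}\le M\}$ and letting $M\to\infty$.

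\emph{Partition case.} Since $\hat\pi_i\le\xi$ and $\hat\pi_i$ lives on $A_i$, if $\xi(A_i\cap A_j)=0$ for $i\neq j$ then on $A_i\setminus\bigcup_{j\neq i}A_j$ we have $\hat\pi_j=0$ for all $j\ne i$. From $\sum_j\hat\pi_j=\xi$ it follows that $\hat\pi_i=\xi$ there. The $A_i$ cover $X$ (the minimum is attained among finitely many indices), and $V_i$ coincides with $A_i$ up to the $\xi$-null overlaps. Therefore $\hat\pi_i=\xi|_{V_i}$.
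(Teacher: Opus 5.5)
Your proof is correct and is in substance the paper's argument. Existence comes from compactness of $\Pi(\xi,*_n)$ together with continuity of $Q_{\GW}(G,\cdot)$, obtained from the domination $\pi_i\otimes\pi_{i'}\le\xi\otimes\xi$ and an $L^1(\xi\otimes\xi)$-density approximation; the paper uses weak convergence and $C_b$ approximants in \cref{lem:product_convergence}, while you use weak-$*$ convergence of densities and tensor products $a\otimes b$ with $a,b\in L^1(\xi)$. Concentration then follows by a first-order optimality argument showing that $\hat\pi$ minimizes its own linearization $\eta\mapsto\sum_k\int_X c_k^{(\hat\pi)}\dx\eta_k$.

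The only real difference is the test direction. The paper uses the single direction $\gamma-\hat\pi$ towards the Voronoi assignment $\gamma$ and invokes \cref{lem:linear_assignment}, whose uniqueness part gives the partition case. You use elementary $i\to j$ mass transfers, read off $c_i^{(\hat\pi)}\le c_j^{(\hat\pi)}$ $\hat\pi_i$-a.e.\ directly, and redo the uniqueness step by hand. Your sublevel-set truncation is harmless but unnecessary, since $c_k^{(\hat\pi)}\in L^1(\xi)$ and $\hat\pi_i\le\xi$ already make every first- and second-order term finite.
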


The proof is given in \cref{subapp:proof_of_3_3}.
Existence follows from
standard arguments.
For the characterization, 
the key step is to show that 
a minimizer 
$\hat{\pi}$
of the quadratic problem also minimizes its linearization at
$\hat{\pi}$,
whose
minimizers satisfy the given properties.

We remark that
the condition $\xi(A_i\cap A_j)=0$ cannot be dropped. 
If $G_{i,i'}=c$ for all $i,i' \in [n]$,
then 
\[
Q_{\GW}(G,(\pi_i)_{i=1}^n)
=\sum_{i,i'=1}^n \iint_{X^2}(g(x,x')-c)^2\dx\pi_i\dx\pi_{i'}
=\iint_{X^2}(g(x,x')-c)^2\dx\xi\dx\xi
\]
is independent of
$(\pi_i)$.
Thus,
every feasible coupling is a minimizer, and correspondingly,
$c_i^{(\hat\pi)}\equiv\int_X(g(\cdot,x')-c)^2\dx\xi$ 
is independent of $i$
so that
$A_i=X$, $i \in [n]$, and
$\xi(A_i\cap A_j)=1$, $i \neq j$.
In particular, minimizers need not be Voronoi partitions.

On the basis of the previous analysis,
we are now able to establish existence for
the GW quantization problem \cref{eq:GW_quant} 
and give a characterization of solutions.

\begin{theorem}\label{thm:GW_quantization_existence_characterization}
    Let $\XX \in \GM$.
    The quantization problem \cref{eq:GW_quant}
    admits a solution. 
    Moreover,
    let 
    $\hat{\YY} = ([n], \hat{G}, \hat{\upsilon}) \in \GM_n$ 
    be any solution of
    \cref{eq:GW_quant}, 
    with fully supported 
    $\hat{\upsilon} \in \p([n])$, 
    see
    \cref{prop:rep-with-larger-cardinality}, 
    and let
    $\hat{\pi} \in \Pio(\XX, \hat{\YY})$. 
    Then the following hold:
    \begin{enumerate}[label=(\roman*)]
        \item \label{item:thm35_gauge}
        $\displaystyle
        \hat{G}_{i,i'}
        = \frac{1}{\hat{\upsilon}_i \hat{\upsilon}_{i'}}
        \iint_{X^2} g(x,x') \,\mathrm{d}\hat{\pi}_i(x) \,\mathrm{d}\hat{\pi}_{i'}(x'),
        \quad i, i' \in [n].$
        \item \label{item:thm35_concentration}
        For every $i \in [n]$, the measure $\hat{\pi}_i$ is concentrated on
        \[
        A_i \coloneqq \Big\{x \in X :
        \hat{c}_i(x) = \min_{j \in [n]} \hat{c}_j(x)\Big\},
        \quad \text{where} \quad
        \hat{c}_i(x) \coloneqq \sum_{i'=1}^n \int_X
        \big(g(x,x') - \hat{G}_{i,i'}\big)^2 \,\mathrm{d}\hat{\pi}_{i'}(x').
        \]
        \item \label{item:thm35_value}
        $\displaystyle
        \qGW(\XX)^2 =
        \GW_2^2(\XX, \hat{\YY})
        = \iint_{X^2} g^2(x,x') \,\mathrm{d}\xi(x) \,\mathrm{d}\xi(x')
        - \sum_{i,i'=1}^n \hat{G}_{i,i'}^2 \,\hat{\upsilon}_i \hat{\upsilon}_{i'}.$
    \end{enumerate}
    If additionally $\xi(A_i \cap A_j) = 0$ for all $i \neq j$, then
    $\hat{\pi} = (\mathrm{Id}, T)_\# \xi$, where $T(x) = i$ if $x \in V_i$, for the Voronoi
    partition $(V_i)_{i=1}^n$ of $X$ with respect to $(\hat{c}_i)_{i=1}^n$.
    In this case, $\hat{\upsilon} = \sum_{i=1}^n \xi(V_i) \cdot \delta_{i}$ and
    \[
    \hat{G}_{i,i'}
    = \frac{1}{\xi(V_i)\xi(V_{i'})}
    \iint_{V_i \times V_{i'}} g(x,x') \,\mathrm{d}\xi(x) \,\mathrm{d}\xi(x'),
    \quad i, i' \in [n].
    \]
\end{theorem}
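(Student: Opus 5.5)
The plan is to combine \cref{thm:pointwise_lb_gauge} and \cref{thm:Q_GW_measure_min_characterization} through the joint formulation \cref{eq:qGW_as_min_Q_GW}, working first at the level of the reduced objective. For a coupling $\pi\in\Pi(\xi,*_n)$ with all $\pi_i(X)>0$, \cref{thm:pointwise_lb_gauge} shows that $\inf_G Q_{\GW}(G,\pi)$ is attained at $\hat G(\pi)$. Expanding the square and inserting the definition of $\hat G(\pi)$ gives
\[
Q_{\GW}(\hat G(\pi),\pi)=\iint g^2\dx\xi\dx\xi-\sum_{i,i'}\hat G_{i,i'}(\pi)^2\,\pi_i(X)\pi_{i'}(X).
\]
If some $\pi_i(X)=0$, the corresponding rows and columns of $G$ play no role, so the same formula holds with the convention that such terms vanish. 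Existence therefore reduces to maximizing the functional $F(\pi)\coloneqq\sum_{i,i'}\bigl(\iint g\dx\pi_i\dx\pi_{i'}\bigr)^2/(\pi_i(X)\pi_{i'}(X))$ over $\Pi(\xi,*_n)$.

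\textbf{Existence.} I would take a minimizing sequence $(G^k,\pi^k)$ of \cref{eq:qGW_as_min_Q_GW} and replace each $G^k$ by $\hat G(\pi^k)$ without increasing the objective. The set $\Pi(\xi,*_n)$ is tight because $[n]$ is finite and $\xi$ is tight, so after passing to a subsequence $\pi^k\weakly\pi$. The obstacles are that $g$ is only in $L^2$, which rules out continuity under weak convergence, and that masses $\pi_i^k(X)$ may tend to $0$.

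To get around the first obstacle, I would use that $\pi_i^k\le\xi$, so the densities $f_i^k\coloneqq \dx\pi_i^k/\dx\xi$ lie in $[0,1]$ and sum to $1$. This gives weak-$*$ compactness in $L^\infty(\xi)$. Then $f_i^k\otimes f_{i'}^k\to f_i\otimes f_{i'}$ weak-$*$ in $L^\infty(\xi\otimes\xi)$, so each integral $\iint g\,f_i^k f_{i'}^k\dx\xi\dx\xi$ converges because $g\in L^1(\xi\otimes\xi)$. Hence $Q_{\GW}(G,\cdot)$ is continuous in this topology for every fixed $G$.

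For the second obstacle, I would show that $G^k$ can be kept bounded. By Cauchy--Schwarz,
\[
|\hat G_{i,i'}|^2\,\pi_i(X)\pi_{i'}(X)\le \iint g^2\dx\pi_i\dx\pi_{i'},
\]
so the entries weighted by the masses stay bounded. For indices whose mass tends to $0$, I would instead set the corresponding entries of $G$ to some fixed value. Their contribution to $Q_{\GW}$ is $\iint (g-G_{i,i'})^2\dx\pi_i\dx\pi_{i'}$, and this tends to $0$ by the uniform integrability of $g^2$ against $\xi\otimes\xi$ combined with $\pi_i\le\xi$. For the remaining indices the masses stay bounded away from $0$, so $\hat G(\pi^k)$ is bounded there and converges along a subsequence. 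The limit $(G,\pi)$ then achieves the infimum by the continuity established above. I expect this step to be the main technical obstacle. A cleaner alternative is to prove that $F$ is weakly-$*$ upper semicontinuous directly, treating vanishing masses via the Cauchy--Schwarz bound. Finally, \cref{prop:rep-with-larger-cardinality} lets me pass to a fully supported representative.

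\textbf{Characterization.} Let $\hat\YY$ be optimal with $\hat\upsilon$ fully supported, and let $\hat\pi\in\Pio(\XX,\hat\YY)$. Then $(\hat G,\hat\pi)$ minimizes \cref{eq:qGW_as_min_Q_GW} jointly. This has three consequences.
\begin{itemize}
\item[(i)] By the uniqueness statement in \cref{thm:pointwise_lb_gauge}, $\hat G=\hat G(\hat\pi)$. Alternatively, \cref{eq:pointwise_lb} together with optimality forces $\|\hat G-\hat G(\hat\pi)\|_{L^2(\hat\upsilon\otimes\hat\upsilon)}=0$, and full support of $\hat\upsilon$ turns this into equality of the matrices.
\item[(ii)] Since $\hat\pi$ minimizes $Q_{\GW}(\hat G,\cdot)$ over $\Pi(\xi,*_n)$, \cref{thm:Q_GW_measure_min_characterization} applies and gives the concentration on the sets $A_i$, with $\hat c_i=c_i^{(\hat\pi)}$.
\item[(iii)] This is the expansion formula above, evaluated with $\pi_i(X)=\hat\upsilon_i$.
\end{itemize}
Under the extra assumption $\xi(A_i\cap A_j)=0$, the last part of \cref{thm:Q_GW_measure_min_characterization} gives $\hat\pi_i=\xi|_{V_i}$. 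This is exactly $\hat\pi=(\mathrm{Id},T)_\#\xi$, so $\hat\upsilon_i=\xi(V_i)$, and substituting into (i) yields the block-average formula for $\hat G$.
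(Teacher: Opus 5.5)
Your proposal is correct and follows essentially the paper's proof. The characterization (i)--(iii) and the Voronoi case are derived exactly as in the paper from \cref{thm:pointwise_lb_gauge} and \cref{thm:Q_GW_measure_min_characterization}. Existence uses the same ingredients: a minimizing sequence with block-mean gauges, weak compactness of $\Pi(\xi,*_n)$, and convergence of the double integrals, where your weak-$*$ density argument is equivalent to \cref{lem:product_convergence}. The only difference is in handling vanishing clusters. You reset their gauge entries and show via uniform integrability that their contributions tend to zero, whereas the paper simply drops these nonnegative terms to get the needed lower bound and extends the limit gauge by zero outside the surviving indices.
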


The strategy for the proof,
presented in \cref{subapp:proof_of_3_4},
is as follows.
General existence of GW quantizers
is shown by
taking a minimizing sequence 
and applying the pointwise 
bound from \cref{thm:pointwise_lb_gauge}.
Then convergence of the gm-spaces 
reduces to convergence of the measures
which subsequently holds due 
to weak compactness.
The characterization is obtained 
by using the connection of the 
quantization problem 
with 
$Q_{\GW}$ 
and the established properties
of pointwise minimizers 
of the latter,
i.e.\ \cref{thm:pointwise_lb_gauge,thm:Q_GW_measure_min_characterization}.

The characterization in 
\cref{thm:GW_quantization_existence_characterization}
induces a
self-consistency condition.
More precisely,
each $\hat{\pi}_i$ is concentrated on the cells 
$A_i$ determined by
$(\hat{c}_i)_{i=1}^n$,
while the costs $\hat{c}_i$ in turn depend
on $\hat{\pi}$. 
Optimal couplings are therefore fixed points
of the assignment map sending
a coupling to 
the one whose slices are concentrated on $A_i$, $i \in [n]$.
In general, 
not every such fixed point is a global
minimizer,
and identifying the spurious ones is
to our knowledge an open problem.
A fixed-point characterization of the same flavor
underlies the computation of 
the tangential GW barycenters 
in \cite{BB2024tangential}.

\section{Gromov--Wasserstein Quantization for Euclidean Spaces}\label{sec:euclidean}

We turn our attention to Euclidean settings,
that is, 
we consider the quantization problem
\cref{eq:GW_quant}
for
$\XX = (X,g,\xi) \in \GM$,
where $X \subset \R^d$,
$\xi \in \p(\R^d)$
and $g$ is either the Euclidean distance,
the squared Euclidean distance
or the Euclidean scalar product.
The set of finite gm-spaces, 
i.e.\ $\bigcup_{n=1}^\infty \GM_n$,
is dense in $\GM$ \cite[Thm.~5.28]{sturm2023space}.
So
for the quantization value $\qGW(\XX)$
defined in \cref{eq:GW_quant},
it holds
$\qGW(\XX)\to 0$, 
as $n\to\infty$,
for every $\XX \in \GM$.
Below we quantify this convergence,
i.e.\ we show that under weak assumptions on
$X \subset \R^d$ and $\xi \in \p(\R^d)$,
$\qGW(\XX)$ 
behaves asymptotically like
$n^{-\frac{1}{d}}$,
for the aforementioned Euclidean gauges.
In each of these cases,
we do this by
showing the existence of constants $c,C > 0$
for which
\begin{equation}\label{eq:sandwich_bounds}
c \cdot \qW(\xi) \leq \qGW(\XX) \leq C \cdot \qW(\xi).
\end{equation}
The constant for the lower bound $c=c(\xi)$ hereby always depends on $\xi$ (and additionally on the diameter of $X$ for the Euclidean gauge), and the constant $C$ is universal for the Euclidean gauge, depending on $X$ for the squared Euclidean gauge, and depending on $\xi$ for the scalar product gauge.
Here $\qW(\xi)$ is taken with respect to 
the Euclidean metric in every case.
The asymptotic rate of $\qGW(\XX)$
is then established by
Zador's theorem (\cite[Thm.~6.2]{GL2000})
which ensures that generally $\qW(\xi)$ 
behaves asymptotically
like
$n^{-\frac{1}{d}}$.
For the special case of the scalar product,
we also 
establish the existence of Monge maps
from $\XX$ to solutions of 
the quantization problem
\cref{eq:GW_quant}
and give a one-to-one correspondence
to the Wasserstein quantization problem
in one dimension.
All proofs of this section are given in \cref{app:euclidean}.

\subsection{Quantization Rate for the Euclidean Distance and Its Square}

In the following we
establish the two-sided bounds 
\cref{eq:sandwich_bounds}
for $g \in \{\|\cdot - \cdot\|, \|\cdot - \cdot\|^2\}$.
The upper bounds rely on the following
more general result on Lipschitz transformations of metrics.

\begin{proposition}\label{prop:GW_quant_upper_bound}
    Let $\XX = (X,g,\xi) \in \GM$ 
    be a gm-space with 
    $g = \varphi \circ d_X$,
    where $d_X$ is a metric on $X$
    and $\varphi:[0,\infty) \to \R$.
    Let  $\qW(\xi)$
    be the Wasserstein quantization value
    defined in \cref{eq:W_quantization}
    with respect to the metric $d_X$.
    If $\varphi$ is $L$-Lipschitz continuous
    on the range of $d_X$, then
    \begin{equation}\label{eq:GW_quant_bound}
        \qGW(\XX)
        \leq 2 L \,\qW(\xi).
    \end{equation}
\end{proposition}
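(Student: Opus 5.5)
Given any $\upsilon=\sum_{i=1}^n \upsilon_i\delta_{y_i}\in\p_n(X)$ and a coupling $\pi\in\Pi(\xi,\upsilon)$, I will build a competitor in $\GM_n$. Since $\GW_2$ only sees gauges, a natural candidate is $\YY=([n],G,\upsilon)$ with $G_{i,i'}\coloneqq\varphi(d_X(y_i,y_{i'}))$ (merging coinciding $y_i$ is harmless, or one simply keeps duplicates as in \cref{prop:rep-with-larger-cardinality}). The plan is to bound $\GW_2(\XX,\YY)$ by $2L$ times the $L^2(\pi)$ transport cost of $\pi$, and then take the infimum over $\pi$ and over $\upsilon$, which gives \cref{eq:GW_quant_bound}.

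Concretely, I use the semi-discrete coupling $(\pi_i)_{i=1}^n$ with $\pi_i=\pi(\cdot\times\{y_i\})$, which is admissible in \cref{eq:GW} for $\XX$ and $\YY$. Then
\[
\GW_2^2(\XX,\YY)\le \sum_{i,i'=1}^n\iint_{X^2}\bigl(\varphi(d_X(x,x'))-\varphi(d_X(y_i,y_{i'}))\bigr)^2\dx\pi_i(x)\dx\pi_{i'}(x').
\]
Both arguments lie in the range of $d_X$, so the Lipschitz property gives a pointwise bound $L\,|d_X(x,x')-d_X(y_i,y_{i'})|$. The reverse triangle inequality (applied twice) gives $|d_X(x,x')-d_X(y_i,y_{i'})|\le d_X(x,y_i)+d_X(x',y_{i'})$. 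Squaring and using $(a+b)^2\le 2a^2+2b^2$ bounds the integrand by $2L^2\bigl(d_X^2(x,y_i)+d_X^2(x',y_{i'})\bigr)$. Integrating, each of the two terms factorizes: the inner sum over the other index has total mass $1$, so each contributes $\sum_i\int d_X^2(x,y_i)\dx\pi_i$. Altogether
\[
\GW_2^2(\XX,\YY)\le 4L^2\sum_i\int_X d_X^2(x,y_i)\dx\pi_i(x).
\]

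Taking the infimum over $\pi$ and $\upsilon$ and using \cref{eq:qW_as_min_QW} then yields $\qGW(\XX)^2\le 4L^2\qW(\xi)^2$. Taking square roots gives $\qGW(\XX)\le 2L\,\qW(\xi)$.

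The only delicate points are bookkeeping rather than analysis. First, $\GW_2$ must be well defined: $\YY$ is finite, so its gauge is trivially square integrable. Second, if $\qW(\xi)=\infty$ (which would need $\xi\notin\p^{(2)}$) the claim is vacuous. Third, if the quantization points are taken in $X$, then $d_X(y_i,y_{i'})$ lies in the range of $d_X$, as required. I do not expect a genuine obstacle; the main step is the reverse triangle inequality estimate combined with the factorization of the double integral.
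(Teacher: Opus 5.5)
Your proof is correct and is essentially the paper's argument: a competitor with gauge $\varphi\circ d_X$ on the centers, then the Lipschitz bound, the triangle-inequality estimate $\lvert d_X(x,x')-d_X(y_i,y_{i'})\rvert\le d_X(x,y_i)+d_X(x',y_{i'})$ and $(a+b)^2\le 2(a^2+b^2)$, which together give $4L^2$ times the transport cost. The only cosmetic difference is in the final step. You bound the cost for arbitrary $\upsilon\in\p_n(X)$ (with distinct support points, so that the slices sum to $\xi$) and arbitrary $\pi\in\Pi(\xi,\upsilon)$, and then take infima. The paper instead fixes $\varepsilon$-optimal centers and uses the nearest-point map $T$ with $\upsilon=T_\#\xi$ and coupling $(\id,T)_\#\xi$.
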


The proof relies on applying 
the Lipschitz bound together
with the triangle inequality
to bound the double GW-like integral
from above by a single Wasserstein-like integral.
It is presented in \cref{subapp:proof_of_4_1}.

\begin{remark}\label{rem:GW_quant_upper_power}
Consider
$\XX =  (X, \|\cdot - \cdot\|^p, \xi) \in \GM$
for some $p \geq 1$.
If $X \subset \R^d$ is bounded,
the previous result implies an upper bound
of $\qGW(\XX)$
via $\qW(\xi)$ 
with respect to the standard Euclidean metric.
More precisely, 
on $[0, \diam(X)]$, 
the function $\varphi(a) = a^p$
satisfies
\[
\lvert \varphi'(a) \rvert = p\,a^{p-1} \leq p\,\diam(X)^{\,p-1},
\]
and is therefore
$p \diam(X)^{\,p-1}$-Lipschitz
continuous.
In this case, 
we obtain
\[
\qGW(\XX) \leq 2p\,\diam(X)^{\,p-1}\,\qW(\xi).
\]
In particular, $\qGW(\XX) \leq 2\,\qW(\xi)$ for $p = 1$ and
$\qGW(\XX) \leq 4\diam(X)\,\qW(\xi)$ 
for $p = 2$.
\end{remark}

In the following,
$\lambda_{\min}(M)$ and $\lambda_{\max}(M)$
denote the smallest, respectively largest, eigenvalue of a positive semi-definite
matrix $M$.
We turn our attention to 
the derivation of the lower bounds.

\begin{theorem}\label{thm:lb_euclidean_gauges}
    Let $\XX = (X,g,\xi) \in \GM$
    with $X \subset \R^d$ closed convex.
    The mean and covariance matrix of 
    $\xi \in \p(X)$
    are set as
    $\bar{m}_\xi \coloneqq \int_X x \dx\xi(x)$, 
    and $\Sigma_\xi \coloneqq 
    \int_X (x - \bar{m}_\xi)(x - \bar{m}_\xi)^\top \dx\xi(x)$,
    respectively. Let $\qW(\xi)$ be defined as in \cref{eq:W_quantization} with respect to the standard Euclidean metric.
    \begin{enumerate}[label=(\roman*)]
        \item \label{item:lb_sq}
        If $\int_X \lVert x \rVert^4 \dx\xi(x) < \infty$ 
        and 
        $g = \lVert \cdot - \cdot \rVert^2$,
        then
        \[
        \qGW(\XX) \geq 2\sqrt{\lambda_{\min}(\Sigma_\xi)} \cdot \qW(\xi).
        \]
        \item \label{item:lb_dist}
        If $X$ is bounded 
        and 
        $g = \|\cdot - \cdot\|$,
        then
        \[
        \qGW(\XX) \geq \frac{\sqrt{\lambda_{\min}(\Sigma_\xi)}}{\diam(X)} \cdot \qW(\xi).
        \]
    \end{enumerate}
\end{theorem}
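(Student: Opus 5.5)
The plan is to work directly with an optimal quantizer and its optimal plan, and to reduce the GW objective to a sum of variances that can be compared with a clustering cost. By \cref{thm:GW_quantization_existence_characterization} there is a solution $\hat{\YY} = ([n],\hat G,\hat\upsilon)$ with fully supported $\hat\upsilon$ and some $\hat\pi \in \Pio(\XX,\hat\YY)$. By item~\ref{item:thm35_gauge}, $\hat G_{i,i'}$ is the mean of $g$ under $\mu_i\otimes\mu_{i'}$, where $\mu_i \coloneqq \hat\pi_i/\hat\upsilon_i$. Hence
\[
\qGW(\XX)^2=\sum_{i,i'=1}^n \hat\upsilon_i\hat\upsilon_{i'}\,\mathrm{Var}_{\mu_i\otimes\mu_{i'}}\bigl(g\bigr).
\]
It therefore suffices to bound each pairwise variance from below by a quantity involving the within-cluster covariance $C_i \coloneqq \mathrm{Cov}(\mu_i)$. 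The target is then the estimate
\[
\sum_i \hat\upsilon_i \tr(C_i) = \sum_i \int \|x-\m(\hat\pi_i)\|^2\dx\hat\pi_i \ge \qW(\xi)^2,
\]
which holds because $\sum_i \hat\upsilon_i\delta_{\m(\hat\pi_i)}\in\p_n(X)$ (using convexity of $X$) and $\hat\pi$ induces a coupling of $\xi$ with it.

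For \ref{item:lb_sq}, I would apply the law of total variance in the second variable:
\[
\mathrm{Var}_{\mu_i\otimes\mu_{i'}}(g)\ge \int \mathrm{Var}_{x\sim\mu_i}\bigl(\|x-x'\|^2\bigr)\dx\mu_{i'}(x').
\]
Summing over $i'$ with weights $\hat\upsilon_{i'}$ replaces the integral against $\mu_{i'}$ by an integral against $\xi$. Write $\|x-x'\|^2 = f(x) - 2\langle x,x'\rangle + \text{const}(x')$ with $f(x)=\|x\|^2$. Then $\mathrm{Var}_{\mu_i}$ is a quadratic function of $x'$:
\[
\mathrm{Var}_{\mu_i}(f) - 4\,\mathrm{Cov}_{\mu_i}(f,x)\cdot x' + 4\,x'^\top C_i x'.
\]
Averaging over $x'\sim\xi$ and using $\int x'^\top C_i x'\dx\xi = \bar m_\xi^\top C_i\bar m_\xi + \tr(C_i\Sigma_\xi)$, this becomes
\[
\mathrm{Var}_{\mu_i}\bigl(f-2\langle\cdot,\bar m_\xi\rangle\bigr)+4\tr(C_i\Sigma_\xi)\ \ge\ 4\lambda_{\min}(\Sigma_\xi)\tr(C_i).
\]
Summing with weights $\hat\upsilon_i$ gives $\qGW(\XX)^2\ge 4\lambda_{\min}(\Sigma_\xi)\qW(\xi)^2$. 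The fourth-moment assumption guarantees that all of these variances are finite.

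For \ref{item:lb_dist}, I would reduce to the squared case. The map $a\mapsto a^2$ is $2\diam(X)$-Lipschitz on $[0,\diam(X)]$, and $\mathrm{Var}(\varphi(Z))\le L^2\mathrm{Var}(Z)$ holds for $L$-Lipschitz $\varphi$ (compare with an independent copy). This yields, pair by pair,
\[
\mathrm{Var}_{\mu_i\otimes\mu_{i'}}(\|x-x'\|)\ge \frac{\mathrm{Var}_{\mu_i\otimes\mu_{i'}}(\|x-x'\|^2)}{4\diam(X)^2}.
\]
The computation above then applies verbatim and gives the constant $\sqrt{\lambda_{\min}(\Sigma_\xi)}/\diam(X)$. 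Boundedness of $X$ also supplies all the moments needed.

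The main obstacle is the averaging step in \ref{item:lb_sq}. One must check that after integrating the quadratic expression in $x'$ against the full measure $\xi$ (rather than against the individual cluster $\mu_{i'}$), the cross terms combine into a nonnegative variance plus $4\tr(C_i\Sigma_\xi)$. This works only because the $x'$-average is over all of $\xi$, so the covariance $\Sigma_\xi$, and not a cluster covariance, appears.
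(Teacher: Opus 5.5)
Your proof is correct and follows essentially the same route as the paper. The ingredients match step by step: the conditional-variance reduction (your law-of-total-variance step is \cref{lem:var_collapse}); the quadratic expansion in the second variable, averaged over all of $\xi$ to give a nonnegative variance plus $4\tr(C_i\Sigma_\xi)$ (\cref{lem:quadratic_var_lb}); the bound $\tr(AB)\ge\lambda_{\min}(B)\tr(A)$; the comparison with $\qW(\xi)$ via cluster means; and the $2\diam(X)$-Lipschitz comparison of $\mathrm{Var}(Z^2)$ with $\mathrm{Var}(Z)$. The differences are cosmetic: you start from an optimal quantizer via \cref{thm:GW_quantization_existence_characterization}, while the paper bounds $\GW_2(\XX,\YY)$ for an arbitrary $\YY\in\GM_n$ and so does not need existence, and in (ii) you apply the Lipschitz comparison to the joint variance before conditioning rather than after.
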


The proof, presented in
\cref{subapp:proof_of_4_3},
relies on 
lower bounding the GW value
as the average within-cell variance of 
the gauge
$g(\cdot,y)$, $y \in X$.
The latter can in turn be related
to the Wasserstein quantization value.

\begin{corollary}\label{cor:GW_quant_rate_1}
    Let $\XX =  (X, g, \xi)\in \GM$ 
    with $X \subset \R^d$ bounded
    and $\xi \in \p(X)$ be absolutely continuous.
    For $g \in \{\|\cdot - \cdot\|, \|\cdot - \cdot\|^2\}$,
    it holds
    $\qGW(\XX) \asymp n^{-1/d}$,
    that is
    \[
    \limsup_{n \to \infty} n^{1/d}\, \qGW(\XX) < \infty
    \quad \text{and} \quad 
    \liminf_{n \to \infty} n^{1/d}\, \qGW(\XX) > 0.
    \]
\end{corollary}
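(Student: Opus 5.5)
The plan is to combine the two-sided bounds \cref{eq:sandwich_bounds} already available with Zador's theorem for $\qW(\xi)$. For the upper bound, I invoke \cref{rem:GW_quant_upper_power}, which is \cref{prop:GW_quant_upper_bound} with $\varphi(a)=a^p$. Since $X$ is bounded, $\diam(X)<\infty$, and this gives
\[
\qGW(\XX)\le 2\,\qW(\xi) \text{ for } p=1, \qquad \qGW(\XX)\le 4\diam(X)\,\qW(\xi) \text{ for } p=2.
\]
For the lower bound, \cref{thm:lb_euclidean_gauges} requires $X$ to be closed and convex. So I first replace $X$ by $K\coloneqq\overline{\mathrm{conv}}(X)$, which is compact and convex with $\diam(K)=\diam(X)$, and view $\xi$ as an element of $\p(K)$. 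The gm-space $(X,g,\xi)$ is homomorphic to $(K,g,\xi)$ via the inclusion map, because $g$ is given by the same formula on both. Since $\qGW$ is invariant under homomorphisms, the value is unchanged. The value $\qW(\xi)$ is the Euclidean quantization value, and it does not change either, at least if it is read in $\R^d$. (If one insists that centers lie in $X$, one can note that for convex $K$ the optimal centers are cell means and lie in $K$. Enlarging the space only decreases $\qW$, so the lower bound still holds in that reading, and the upper bound was proven within $X$ itself. I would simply read $\qW$ as the Euclidean value with centers in $\R^d$, or equivalently in $K$, throughout.)

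Next I need $\lambda_{\min}(\Sigma_\xi)>0$. Suppose $v^\top\Sigma_\xi v=0$ for some unit vector $v$. Then $\langle x-\bar m_\xi,v\rangle=0$ for $\xi$-a.e.\ $x$, so $\xi$ is concentrated on an affine hyperplane. A hyperplane has Lebesgue measure zero, which contradicts absolute continuity. Boundedness of $X$ gives $\int\|x\|^4\dx\xi<\infty$, so \cref{thm:lb_euclidean_gauges} applies and yields $\qGW(\XX)\ge c\,\qW(\xi)$ with
\[
c=2\sqrt{\lambda_{\min}(\Sigma_\xi)} \text{ for the squared distance}, \qquad c=\sqrt{\lambda_{\min}(\Sigma_\xi)}/\diam(X) \text{ for the distance},
\]
and $c>0$ in both cases.

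Finally, I apply Zador's theorem \cite[Thm.~6.2]{GL2000}. Since $\xi$ has bounded support, it has all moments, in particular $\int\|x\|^{2+\delta}\dx\xi<\infty$. It is absolutely continuous with mass $1$. Hence $\lim_n n^{1/d}\qW(\xi)\in(0,\infty)$. Multiplying the sandwich bounds by $n^{1/d}$ then gives $\limsup_n n^{1/d}\qGW(\XX)<\infty$ and $\liminf_n n^{1/d}\qGW(\XX)>0$.

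The main (mild) obstacle is the reduction to closed convex $X$ in the lower bound, together with checking that $\qW$ is interpreted consistently in the upper and lower bounds. The positivity of $\lambda_{\min}$ is routine.
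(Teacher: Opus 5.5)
Your proposal is correct and follows the paper's proof essentially step for step: the upper bound comes from \cref{rem:GW_quant_upper_power}, the lower bound from \cref{thm:lb_euclidean_gauges} after passing to the closed convex hull, $\lambda_{\min}(\Sigma_\xi)>0$ holds because hyperplanes are Lebesgue-null, and the rate follows from Zador's theorem. One small point concerns your parenthetical on reading $\qW$, whose logic is reversed: $\qW$ with centers in $K$ is at most the centers-in-$X$ value, so the lower bound proved on $K$ does not transfer to the centers-in-$X$ reading, and an upper bound proved within $X$ involves the larger centers-in-$X$ value rather than the $\R^d$ value that Zador's theorem controls; the clean fix, which is what the paper does, is to replace $X$ by $K=\overline{\mathrm{conv}}(X)$ at the outset for both bounds (the remark applies verbatim on $K$ since $\diam(K)=\diam(X)$), so that every occurrence of $\qW$ is the centers-in-$K$ value, which coincides with the $\R^d$ value.
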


\begin{proof}
    Without loss of generality, 
    we assume that $X \subset \R^d$ is additionally closed and convex.
    Otherwise,
    we can exchange $\XX$ 
    with the representative
    whose base space is the closed convex hull of $X$
    and $g,\xi$ unchanged.
    Firstly,
    note that
    the convergence of 
    $\qW(\xi)$
    with respect to the standard Euclidean metric on $\R^d$
    is $n^{-\frac{1}{d}}$
    due to Zador's theorem 
    (\cite[Thm.~6.2]{GL2000}).
    So it suffices in both cases 
    to bound $\qGW(\XX)$
    from below and above by
    $\qW(\xi)$ 
    up to a positive multiplicative constant.
    The upper bound follows directly by 
    \cref{prop:GW_quant_upper_bound},
    see \cref{rem:GW_quant_upper_power}.
    The lower bounds follow by \cref{thm:lb_euclidean_gauges}.
    It remains to establish that the smallest eigenvalue of 
    $\Sigma_\xi = \int_X (x -\bar{m}_\xi)(x- \bar{m}_\xi)^\top \dx \xi(x)$,
    with $\bar{m}_\xi = \int_X x \dx \xi(x)$,
    is non-zero.
    For any $v \in \R^d$ with $v \neq 0$, it holds
    \[
    v^\top \Sigma_\xi v = \int_X (v^\top (x - \bar{m}_\xi))^2 \dx \xi(x) > 0.
    \]
    The last inequality holds as
    $\{x \in X : v^\top (x - \bar{m}_\xi) = 0\}$
    is a Lebesgue null-set.
    Therefore, 
    $\lambda_{\min}(\Sigma_\xi) > 0$
    as desired.
\end{proof}

\subsection{The Scalar Product: Quantization Rate and Monge Maps}

We turn our attention to the scalar product case,
i.e.\ throughout we consider 
$\XX = (X,\langle \cdot, \cdot \rangle, \xi) \in \GM$
with $X \subset \R^d$.
In this setting,
the quantization problem admits 
an accessible alternative formulation.
It allows us to derive
two-sided bounds of the form \cref{eq:sandwich_bounds}
and establish the existence of Monge maps
from $\XX$ to a quantizer.

\begin{theorem}\label{thm:lower_and_upper_bound_scalar_product}
    Let $X \subset \R^d$ be closed convex and
    $\XX = (X, \langle \cdot, \cdot \rangle, \xi) \in \GM$.
    Consider the symmetric 
    positive semi-definite matrix
    $M_\xi = \int_X x x^\top \dx \xi(x) \in \R^{d \times d}$.
    The GW quantization value admits the bounds
    \[
    \sqrt{\lambda_{\min}(M_\xi)} \cdot \qW(\xi)
    \leq 
    \qGW(\XX)
    \leq 
    \sqrt{2 \lambda_{\max}(M_\xi)} \cdot \qW(\xi),
    \]
    where $\qW(\xi)$ is defined in \cref{eq:W_quantization} 
    with respect to the standard Euclidean metric.
\end{theorem}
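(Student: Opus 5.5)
The plan is to rewrite the scalar product quantization problem as a problem about second-moment matrices, so that everything reduces to positive semi-definite matrix inequalities. By \cref{eq:qGW_as_min_Q_GW} we have $\qGW(\XX)^2=\inf_{\pi}\inf_G Q_{\GW}(G,\pi)$.

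\textbf{Step 1: optimize out the gauge.} Fix $\pi=(\pi_i)_{i=1}^n\in\Pi(\xi,*_n)$. Clusters with $\pi_i(X)=0$ do not enter $Q_{\GW}$, so we drop them and apply \cref{thm:pointwise_lb_gauge} to the remaining ones. The optimal gauge is
\[
\hat G_{i,i'}=\langle \m(\pi_i),\m(\pi_{i'})\rangle .
\]
Since $\hat G$ is the $L^2$-projection of $g$ onto functions that are constant on the blocks (this is the same computation as in the proof of \cref{thm:pointwise_lb_gauge}), Pythagoras gives
\[
\inf_G Q_{\GW}(G,\pi)=\iint\langle x,x'\rangle^2\dx\xi\dx\xi-\sum_{i,i'}\pi_i(X)\pi_{i'}(X)\langle m_i,m_{i'}\rangle^2,
\qquad m_i\coloneqq\m(\pi_i).
\]
Writing $\langle x,x'\rangle^2=\tr(xx^\top x'x'^\top)$, the first term equals $\|M_\xi\|_F^2$. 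The second term equals $\|M_\pi\|_F^2$, where
\[
M_\pi\coloneqq\sum_i\pi_i(X)\,m_im_i^\top .
\]

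\textbf{Step 2: key identity.} Set
\[
D_\pi\coloneqq M_\xi-M_\pi=\sum_i\int_X (x-m_i)(x-m_i)^\top\dx\pi_i(x).
\]
This identity follows from $\int x\dx\pi_i=\pi_i(X)m_i$. In particular $D_\pi\succeq0$ and $M_\pi\succeq0$. Moreover, $\tr D_\pi=\sum_i\int\|x-m_i\|^2\dx\pi_i$ is the Wasserstein-type cost of the clustering $\pi$ with centers at the cluster means, so $\tr D_\pi\ge \qW(\xi)^2$ by \cref{eq:qW_as_min_QW}. Expanding the Frobenius norms,
\[
\|M_\xi\|_F^2-\|M_\pi\|_F^2=\|M_\xi\|_F^2-\|M_\xi-D_\pi\|_F^2=2\langle M_\xi,D_\pi\rangle_F-\|D_\pi\|_F^2=\langle M_\xi,D_\pi\rangle_F+\langle M_\pi,D_\pi\rangle_F .
\]

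\textbf{Step 3: the two bounds.} The lower bound holds for every $\pi$. The trace of a product of two PSD matrices is nonnegative, so $\langle M_\pi,D_\pi\rangle_F\ge0$. Also $\langle M_\xi,D_\pi\rangle_F\ge\lambda_{\min}(M_\xi)\tr D_\pi\ge\lambda_{\min}(M_\xi)\qW(\xi)^2$. Taking the infimum over $\pi$ gives the lower bound.

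For the upper bound, we use $\langle M_\pi,D_\pi\rangle_F\le\langle M_\xi,D_\pi\rangle_F$, which holds because $M_\xi-M_\pi=D_\pi\succeq0$ and $D_\pi\succeq0$. Hence the value is at most $2\langle M_\xi,D_\pi\rangle_F\le2\lambda_{\max}(M_\xi)\tr D_\pi$. We then choose $\pi$ to be the partition $\xi|_{V_i}$ from an optimal Wasserstein quantizer in the form \cref{eq:W_quant_as_partitioning_problem}. Its existence is guaranteed for closed convex $X$ by \cite[Thm.~4.12]{GL2000}. This choice gives $\tr D_\pi=\qW(\xi)^2$.

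\textbf{Main obstacle.} The main care is in Step 1. We must justify the closed form of $\inf_G Q_{\GW}$, including the handling of zero-mass clusters. We must also check that all quantities are finite, which requires $\int\|x\|^4\dx\xi<\infty$; this follows from $\langle\cdot,\cdot\rangle\in L^2(\xi\otimes\xi)$. The matrix inequalities themselves are routine.
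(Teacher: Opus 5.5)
Your argument is correct and is essentially the paper's proof. You use the same reduction $\inf_G Q_{\GW}(G,\pi)=\|M_\xi\|_F^2-\|M_\pi\|_F^2$, the same rewriting as $\tr\bigl(D_\pi(M_\xi+M_\pi)\bigr)$, and the same L\"owner sandwich $M_\xi\preceq M_\xi+M_\pi\preceq 2M_\xi$; picking an optimal Wasserstein partition instead of using $\inf_\pi\tr D_\pi=\qW(\xi)^2$ is only a cosmetic difference.

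Two corrections to your closing remarks. First, only finite second moments are needed, and these do follow from $\langle\cdot,\cdot\rangle\in L^2(\xi\otimes\xi)$; a finite fourth moment does not follow, since for $d=1$ one has $\iint (xx')^2\dx\xi(x)\dx\xi(x')=(\int x^2\dx\xi)^2$. Second, state explicitly that $\tr D_\pi\ge\qW(\xi)^2$ uses $m_i\in X$, which is where closed convexity enters.
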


The proof of this result is
provided in \cref{subapp:proof_of_4_5}.
Therein,
we also establish that 
$\xi \in \p^{(2)}(X)$,
so that
$M_\xi$ and $\qW(\xi)$ 
are well-defined.
The proof relies on auxiliary results
that express
$\qGW(\XX)^2$
as an infimum of the within-cell variance
which is weighted by a matrix contained
between $M_\xi$ and $2M_\xi$.
This bounds
$\qGW(\XX)^2$ between 
$\lambda_{\min}(M_\xi)$ and $2\lambda_{\max}(M_\xi)$ times
the unweighted within-cell variance, 
whose infimum over couplings is
$\qW(\xi)^2$.

\begin{corollary}[Quantization rate]\label{cor:GW_quant_rate_2}
    Let $\XX =  (X, \langle \cdot, \cdot \rangle, \xi)\in \GM$ 
    with $X \subset \R^d$
    and $\xi \in \p(X)$ be absolutely continuous.
    If
    $\int_X \|x\|^{2+\delta} \dx\xi < \infty$ 
    for some $\delta > 0$,
    then
    $\qGW(\XX) \asymp n^{-1/d}$,
    that is
    \[
    \limsup_{n \to \infty} n^{1/d}\, \qGW(\XX) < \infty
    \quad \text{and} \quad 
    \liminf_{n \to \infty} n^{1/d}\, \qGW(\XX) > 0.
    \]
\end{corollary}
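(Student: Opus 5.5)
The plan mirrors the proof of \cref{cor:GW_quant_rate_1}: sandwich $\qGW(\XX)$ between constant multiples of $\qW(\xi)$ via \cref{thm:lower_and_upper_bound_scalar_product}, and then read off the rate from Zador's theorem.

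\textbf{Reduction to a closed convex base space.} \Cref{thm:lower_and_upper_bound_scalar_product} requires $X$ closed convex. So we first replace $\XX$ by the representative whose base space is the closed convex hull $\overline{\mathrm{conv}}(X)$, or simply $\R^d$, keeping $\langle\cdot,\cdot\rangle$ and $\xi$ unchanged. The inclusion map is a homomorphism, so $\qGW(\XX)$ does not change. The Wasserstein value $\qW(\xi)$ does not change either, since optimal centers for the Euclidean metric can always be taken in the closed convex hull of the support. (For the asymptotic statement it would even suffice to work with $X=\R^d$ throughout.)

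\textbf{Upper bound.} The moment assumption $\int\|x\|^{2+\delta}\dx\xi<\infty$ gives $\xi\in\p^{(2)}(\R^d)$. Hence $M_\xi$ is finite and $\lambda_{\max}(M_\xi)<\infty$. \Cref{thm:lower_and_upper_bound_scalar_product} then yields
\[
\qGW(\XX)\le\sqrt{2\lambda_{\max}(M_\xi)}\,\qW(\xi).
\]
Zador's theorem \cite[Thm.~6.2]{GL2000} applies, since $\xi$ is absolutely continuous with a finite $(2+\delta)$-moment. It gives $\lim_n n^{1/d}\qW(\xi)\in(0,\infty)$, so $\limsup_n n^{1/d}\qGW(\XX)<\infty$.

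\textbf{Lower bound.} This is the only step needing an argument beyond quoting results: we must show $\lambda_{\min}(M_\xi)>0$. For $v\neq 0$,
\[
v^\top M_\xi v=\int(v^\top x)^2\dx\xi(x).
\]
This vanishes only if $\xi$ is concentrated on the hyperplane $\{x: v^\top x=0\}$. That hyperplane is a Lebesgue null set, so it is $\xi$-null by absolute continuity, and hence $v^\top M_\xi v>0$. Note that the relevant quantity is the uncentered second moment, unlike the covariance used in \cref{cor:GW_quant_rate_1}, but the same null-hyperplane argument works. The lower bound in \cref{thm:lower_and_upper_bound_scalar_product} together with Zador's theorem then gives
\[
\liminf_n n^{1/d}\qGW(\XX)\ge\sqrt{\lambda_{\min}(M_\xi)}\,\lim_n n^{1/d}\qW(\xi)>0.
\]

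I expect no real obstacle. The only point needing care is the first step, namely checking that passing to the closed convex hull leaves both $\qGW$ and $\qW$ unchanged.
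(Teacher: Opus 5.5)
Your proposal is correct and matches the paper's argument. The paper's proof just says the result follows from \cref{thm:lower_and_upper_bound_scalar_product} exactly as in \cref{cor:GW_quant_rate_1}: pass to the closed convex hull, sandwich $\qGW(\XX)$ between multiples of $\qW(\xi)$, show $\lambda_{\min}(M_\xi)>0$ by the null-hyperplane argument, and invoke Zador's theorem. One small inaccuracy, harmless to the conclusion: for nonconvex $X$, $\qW(\xi)$ with centers restricted to $X$ can be strictly larger than with centers in the closed convex hull, so it is not literally ``unchanged''. Only the hull value enters \cref{thm:lower_and_upper_bound_scalar_product} and Zador's theorem, and it coincides with the $\R^d$ value by projecting centers.
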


\begin{proof}
    Due to the bounds from \cref{thm:lower_and_upper_bound_scalar_product},
    the proof is analogous to \cref{cor:GW_quant_rate_1}.
\end{proof}

The lower constant in 
\cref{eq:sandwich_bounds} 
is necessarily measure-dependent.
In 
\cref{app:example_two_sided_bounds}
we exhibit a family of gauged measure
spaces $\XX^{(n)}$ 
with measures $\xi^{(n)}$, 
valid for all three Euclidean gauges, 
with $\qGW(\XX^{(n)})/\qW(\xi^{(n)}) \to 0$, 
ruling out any uniform
lower constant $c > 0$.

The existence of 
\emph{Monge maps}---optimal couplings concentrated
on the graph
of a map---is a recurring theme in the realm of optimal transport.
In the GW setting,
for the inner-product cost,
\cite{DLV2024} establish the existence of an optimal
Monge map under absolute continuity of the source measure, 
whereas for the
squared-distance cost they obtain, 
in general, 
only an optimal $2$-map.
The latter is a plan that couples source masses 
to at most 2 targets.
In the semi-discrete, inner-product setting, 
\cite{RGK2023} likewise establish
the existence of Monge maps, 
but under absolute continuity of $\xi$
together with a non-degeneracy condition on the discrete marginal 
(guaranteed,
e.g., when its support points have pairwise distinct norms).
In the following,
we present a result ensuring the existence of Monge maps
for the quantization problem \cref{eq:GW_quant}
in the scalar-product setting
that requires neither absolute continuity of $\xi$
nor any such non-degeneracy condition,
since here the discrete object is itself subject to optimization.

For the following, recall
\[
\m_\xi(V) = \frac{1}{\xi(V)} \int_{V} x \dx \xi(x), \qquad V \subset X \text{ measurable},
\]
where $\m_\xi(V) \in X$ is set arbitrarily for $\xi(V) = 0$.

\begin{theorem}[Monge Maps for Scalar Product Spaces]\label{thm:scalar_product_partition_attainment}
    Let $\XX = (X, \langle \cdot, \cdot \rangle, \xi) \in \GM$
    with $X \subset \R^d$.
    Then there exists a measurable map 
    $T \colon X \to [n]$ 
    such that, for the disjoint partition 
    $V_i \coloneqq T^{-1}(\{i\})$, $i \in [n]$, 
    the gm-space 
    \[
    \hat{\YY} \coloneqq ([n], \hat{G}, \hat{\upsilon}) \in \GM_n, 
    \qquad
    \hat{G}_{i,i'} \coloneqq \langle \m_\xi(V_i), \m_\xi(V_{i'}) \rangle, 
    \qquad
    \hat{\upsilon} \coloneqq \sum_{i=1}^n \xi(V_i) \cdot \delta_{i},
    \]
    is a solution of the GW quantization problem \cref{eq:GW_quant} with respect to $\XX$,
    and 
    $(\id, T)_\# \xi \in \Pio(\XX, \hat{\YY})$.
\end{theorem}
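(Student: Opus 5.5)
The plan is to rewrite the quantization problem in the scalar-product case as the maximization of a convex functional of the coupling. A first-order (supporting hyperplane) argument then lets us replace an optimal coupling by a deterministic one coming from a partition.

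First I check that $\xi\in\p^{(2)}(X)$. Indeed, $\iint\langle x,x'\rangle^2\dx\xi\dx\xi=\|M_\xi\|_F^2<\infty$, and since $M_\xi$ is positive semi-definite this forces $\tr M_\xi=\int\|x\|^2\dx\xi<\infty$. Hence all slices $\pi_i$ have finite first moments.

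Fix $\pi=(\pi_i)_{i=1}^n\in\Pi(\xi,*_n)$ and write $w_i\coloneqq\pi_i(X)$ and $a_i\coloneqq\int x\dx\pi_i(x)$, so that $\m(\pi_i)=a_i/w_i$. By \cref{thm:pointwise_lb_gauge}, the optimal gauge matrix is $\hat G_{i,i'}=\langle \m(\pi_i),\m(\pi_{i'})\rangle$, and a direct computation gives
\[
\inf_{G} Q_{\GW}(G,\pi)=\|M_\xi\|_F^2-F(\pi),\qquad F(\pi)\coloneqq\Bigl\|B(\pi)\Bigr\|_F^2,\quad B(\pi)\coloneqq\sum_{i=1}^n \frac{a_ia_i^\top}{w_i},
\]
with the convention that $a a^\top/0 \coloneqq 0$ whenever $a=0$. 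Note that $w_i=0$ forces $a_i=0$. Therefore $\qGW(\XX)^2=\|M_\xi\|_F^2-\sup_\pi F(\pi)$.

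The key structural fact is that $F$ is convex in $\pi$:
\begin{itemize}
\item The perspective map $(a,w)\mapsto aa^\top/w$ is matrix-convex in the Loewner order, and $(a_i,w_i)$ depend linearly on $\pi$. Hence $B$ is Loewner-convex with values in the positive semi-definite cone.
\item The map $B\mapsto\|B\|_F^2$ is convex, and it is monotone on the positive semi-definite cone, because $\|B\|_F^2=\tr(B^2)$ is increasing under $0\preceq B\preceq B'$.
\end{itemize}
A monotone convex function of a matrix-convex map is convex, so $F$ is convex.

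Next I take an optimal quantizer and argue by linearization. By \cref{thm:GW_quantization_existence_characterization} there is an optimal $\hat\YY$ with fully supported $\hat\upsilon$ and some $\hat\pi\in\Pio(\XX,\hat\YY)$, so all $w_i>0$ and $\hat\pi$ maximizes $F$. At $\hat\pi$ the functional $F$ is differentiable along directions $\pi'-\hat\pi$. Convexity gives
\[
F(\pi')\ \ge\ F(\hat\pi)+\sum_{i=1}^n\int c_i\dx(\pi'_i-\hat\pi_i),\qquad c_i(x)=4\,\m(\hat\pi_i)^\top B\,x-2\,\m(\hat\pi_i)^\top B\,\m(\hat\pi_i),\quad B=B(\hat\pi).
\]
This comes from differentiating $aa^\top/w$ in $(a,w)$ and pairing the result with $2B$. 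Each $c_i$ is affine, hence measurable and $\xi$-integrable. I then define $T(x)$ as the smallest index maximizing $c_i(x)$, and set $\pi'_i\coloneqq\xi|_{V_i}$ with $V_i\coloneqq T^{-1}(\{i\})$. Since $\pi'$ puts all mass on pointwise maximal $c_i$, the linear term is $\ge0$. Hence $F(\pi')\ge F(\hat\pi)=\sup F$, so $\pi'=(\id,T)_\#\xi$ is also a maximizer. One caveat: some $V_i$ may be $\xi$-null. Then the value of $F$ uses the convention above, and the convexity inequality still holds because it only requires the differentiability point $\hat\pi$ to have positive masses.

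Finally I conclude with \cref{thm:pointwise_lb_gauge} applied to $\pi'$. The optimal gauge for $\pi'$ is $\hat G_{i,i'}=\langle\m_\xi(V_i),\m_\xi(V_{i'})\rangle$ on cells of positive mass, and the arbitrary choice of $\m_\xi(V_i)$ on null cells is harmless since those entries carry zero weight. This gives $Q_{\GW}(\hat G,\pi')=\qGW(\XX)^2$. Because $\pi'\in\Pi(\xi,\hat\upsilon)$ with $\hat\upsilon=\sum_i\xi(V_i)\delta_i$, we get $\GW_2^2(\XX,\hat\YY)\le Q_{\GW}(\hat G,\pi')=\qGW(\XX)^2\le\GW_2^2(\XX,\hat\YY)$. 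So $\hat\YY$ solves \cref{eq:GW_quant}, and $(\id,T)_\#\xi\in\Pio(\XX,\hat\YY)$.

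I expect the main obstacle to be justifying convexity of $F$ rigorously (the matrix-convex/monotone composition) together with the first-order inequality at $\hat\pi$. Should the composition argument be delicate, I would instead verify convexity of $t\mapsto F(\hat\pi+t(\pi'-\hat\pi))$ on $[0,1]$ directly. For $t<1$ all masses stay positive, and one can pass to $t=1$ by continuity of $F$.
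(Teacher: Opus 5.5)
Your proof is correct, and it differs from the paper at the decisive step. Both arguments reduce the problem to maximizing the convex functional $F(\pi)=\|M(\pi)\|_F^2$ over $\Pi(\xi,*_n)$, and both prove convexity the same way (matrix-convex perspective plus L\"owner monotonicity of $\|\cdot\|_F^2$). From there the paper takes an abstract route:
\begin{enumerate}
\item it proves weak continuity of $F$ by a tail argument;
\item it applies the Bauer maximum principle on the weakly compact convex set $\Pi(\xi,*_n)$;
\item it shows, via disintegration and a perturbation argument, that extreme points of $\Pi(\xi,*_n)$ are induced by maps.
\end{enumerate}
You instead start from an optimal coupling $\hat\pi$ supplied by \cref{thm:GW_quantization_existence_characterization}. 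You then use the supporting-line inequality for the convex function $t\mapsto F(\hat\pi+t(\pi'-\hat\pi))$ to show that the hard assignment maximizing the linearization is again a maximizer.

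Your route avoids Bauer's principle, the extreme-point characterization, and weak continuity of $F$; compactness enters only through the general existence theorem. It is also constructive. Since $c_i(x)=2x^\top M_\xi x-2\hat c_i(x)$, where $\hat c_i$ are the costs from \cref{thm:GW_quantization_existence_characterization}, your $T$ is exactly the Voronoi map for $(\hat c_i)_{i=1}^n$. In other words, it is one unit step of \cref{alg:1} started at an optimum, which is the mechanism behind \cref{cor:scalar_product_convergence}. Consequently the cells can be chosen polyhedral: they are Voronoi cells of the block means $\m(\hat\pi_i)$ for the seminorm $y\mapsto(y^\top B y)^{1/2}$ with $B=B(\hat\pi)$. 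The paper's route, in turn, is self-contained regarding existence and yields the more abstract fact that the maximum is attained at an extreme point.

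One small repair is needed. Your second-moment step is circular as written, because both $M_\xi$ and the identity $\iint\langle x,x'\rangle^2\dx\xi\dx\xi=\|M_\xi\|_F^2$ already presuppose finite second moments. Truncate to $\{\|x\|\le R\}$, use $\tr(M_R)^2\le d\,\|M_R\|_F^2\le d\iint\langle x,x'\rangle^2\dx\xi\dx\xi$, and let $R\to\infty$, exactly as in \cref{lem:second_moment}.
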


The proof, 
presented in \cref{subapp:proof_of_4_8},
establishes the result
by exploiting the reformulation of $\qGW(\XX)$
that we established in \cref{subapp:proof_of_4_5}
for the proof of 
\cref{thm:lower_and_upper_bound_scalar_product}.
The latter characterizes $\qGW$ 
as the minimization
of a concave function over the set $\Pi(\xi, *_n)$.
By the Bauer maximum principle, 
an extreme point of $\Pi(\xi, *_n)$ is a solution.
Showing that the extreme points are
map-induced plans is a standard argument.

The previous theorem shows 
that in the scalar-product setting
$\XX=(X,\langle\cdot,\cdot\rangle,\xi)$ 
with $X\subset\R^d$, 
an optimal quantizer is
induced by a partition $(V_i)_{i=1}^n$ of $X$: 
the optimal coupling is the hard assignment
$\pi_i=\xi\vert_{V_i}$ and the optimal gauge is the block-mean gauge
$\hat G_{i,i'}=\langle \m_\xi(V_i),\m_\xi(V_{i'})\rangle$, 
$i,i' \in [n]$.
The quantization problem
therefore reduces to the partition problem
\begin{equation}\label{eq:GW_quant_as_partitioning_problem}
\qGW(\XX)^2
=
\inf_{
\substack{
V_1,\dotsc,V_n \subset X \text{ disjoint}\\
\bigcup_{i=1}^n V_i = X
}
}
\sum_{i,i'=1}^n \int_{V_i \times V_{i'}}
\bigl(
\langle x, x' \rangle - \langle \m_\xi(V_i),\m_\xi(V_{i'}) \rangle
\bigr)^2
\dx \xi(x) \dx \xi(x').
\end{equation}
This is the natural GW analogue of the Wasserstein partition
formulation \cref{eq:W_quant_as_partitioning_problem}: 
the Wasserstein objective
penalizes the squared deviation of each point $x$ from its cell barycenter
$\m_\xi(V_i)$, 
whereas the GW objective penalizes the squared deviation of each
pairwise scalar product value $\langle x,x'\rangle$ 
from the scalar product value of the block means
$\langle \m_\xi(V_i),\m_\xi(V_{i'})\rangle$ 
over $V_i\times V_{i'}$.
We conclude the section by noting that,
in one dimension,
the Wasserstein and GW quantization problems
reduce to the same problem.

\begin{theorem}\label{thm:closed_form_1d}
Let 
$\XX=(X,\langle\cdot,\cdot\rangle,\xi) \in\GM$ 
with $X\subseteq\spann(u)$,
$u\in\R^d$, $\|u\|=1$, 
and set $T\colon X\to\R$, 
$T(x)=u^\top x$
as well as
$M_2(T_\# \xi) \coloneqq\int_\R t^2\dx(T_\# \xi)(t)$. 
Then
\[
\qGW(\XX)^2=\qW(T_\# \xi)^2\bigl(2M_2(T_\# \xi)-\qW(T_\# \xi)^2\bigr).
\]
\end{theorem}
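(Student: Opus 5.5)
Since $X\subseteq\spann(u)$, every $x\in X$ equals $T(x)u$, so $\langle x,x'\rangle=T(x)T(x')$. Thus $T$ is a homomorphism from $\XX$ to $(\R,\,st,\,\mu)$ with $\mu\coloneqq T_\#\xi$. Because $\qGW$ is invariant under homomorphisms, it suffices to treat $d=1$: $\XX=(\R,st,\mu)$ with $\mu\in\p^{(2)}(\R)$. Finiteness of the second moment follows from $g\in L^2$, since $\iint s^2t^2\dx\mu\dx\mu=M_2(\mu)^2$.

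We first invoke \cref{thm:scalar_product_partition_attainment}. It shows that $\qGW(\XX)^2$ equals the infimum over partitions in \cref{eq:GW_quant_as_partitioning_problem}, and the optimal gauge is $\hat G_{i,i'}=m_im_{i'}$ with $m_i\coloneqq\m_\mu(V_i)$ and weights $w_i\coloneqq\mu(V_i)$. (It is enough that the infimum over partitions is at most $\qGW^2$; the reverse inequality is trivial.) For a fixed partition, \cref{thm:GW_quantization_existence_characterization}\ref{item:thm35_value}, or equivalently the $L^2$-mean identity behind \cref{thm:pointwise_lb_gauge}, gives the objective as
\[
\iint s^2t^2\dx\mu\dx\mu-\sum_{i,i'}m_i^2m_{i'}^2w_iw_{i'}
= M_2(\mu)^2-\Bigl(\sum_i w_im_i^2\Bigr)^2 .
\]
Write $S\coloneqq\sum_i w_im_i^2$. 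The variance decomposition gives $M_2(\mu)=S+\sum_i\int_{V_i}(t-m_i)^2\dx\mu$, so $S=M_2(\mu)-D(V)$, where $D(V)$ is the within-cell variance from \cref{eq:W_quant_as_partitioning_problem}.

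The objective is therefore $M_2^2-(M_2-D)^2=D(2M_2-D)$. The map $D\mapsto D(2M_2-D)$ is increasing on $[0,M_2]$, and $D\le M_2$ always holds because $S\ge0$. Minimizing over partitions thus means minimizing $D$. By \cref{eq:W_quant_as_partitioning_problem}, applied with $X=\R$ convex, $\inf_V D(V)=\qW(\mu)^2$. Monotonicity and continuity let us pass the infimum through, which gives $\qGW(\XX)^2=\qW(\mu)^2(2M_2(\mu)-\qW(\mu)^2)$.

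The main point to check carefully is the reduction from $X\subseteq\spann(u)$ to $\R$. Quantization values, including $\qW(T_\#\xi)$, are taken on $\R$, and $\qGW$ must not change. This holds because $T$ is a homomorphism and $\qGW$ depends only on the homomorphism class. Alternatively, we can avoid the transfer altogether and apply \cref{thm:scalar_product_partition_attainment} directly on $X$. In that case partitions of $X$ correspond to partitions of $T(X)$, and $\m_\xi(V)=\m_\mu(T(V))\,u$. One also has to handle cells of zero mass: such cells contribute nothing to $S$ or $D$, so the formulas remain valid.
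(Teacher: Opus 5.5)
Your proposal is correct and follows essentially the same route as the paper. You reduce via the homomorphism $T$ to $(\R,(s,t)\mapsto st,T_\#\xi)$, use the partition formulation \cref{eq:GW_quant_as_partitioning_problem} to write the objective as $M_2^2-S^2$ with $S=M_2-D$, and transfer the infimum to $\qW(T_\#\xi)^2$; the only difference is cosmetic, since you pass the infimum through the increasing map $D\mapsto D(2M_2-D)$ on $[0,M_2]$, whereas the paper uses $S\ge 0$ to get $\sup S^2=(\sup S)^2$.
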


The proof, 
presented in \cref{subapp:proof_of_4_9}, 
uses
that $\XX$ is homomorphic to 
the one-dimensional gm-space 
$(\R, (s,t) \mapsto st, T_\# \xi)$
and a direct reformulation 
of the appearing integrals
in one dimension.

\section{Algorithm and Convergence}\label{sec:alg}

We turn 
to the practical approximation of
solutions to the GW 
quantization problem \cref{eq:GW_quant}
for which we propose
\cref{alg:1}.
\begin{algorithm}[t]
	\begin{algorithmic}[1]
	    \State \textbf{Input:} 
	    \parbox[t]{300pt}{gm-space $\XX \coloneqq (X,g,\xi)$
        \\
	    number of support points $n \in \N$
        }
        \State Initialize $\pi \in \Pi(\xi, *_n)$ with $\pi_i(X) > 0$
        and set $\upsilon_i \gets \pi_i(X)$
	    \While{not converged}
            \State
            $\hat{G}(\pi)_{i,i'}
            \gets
            \frac{1}{\upsilon_i \upsilon_{i'}}
            \iint_{X^2} g(x,x')
            \dx \pi_{i}(x)
            \dx \pi_{i'}(x')$,
            \Comment{{\footnotesize indices $i,i'$ range over $\supp(\upsilon)$ throughout}}
            \State
            $
            \hat{c}_i^{(\pi)}
            \gets
            \sum_{i'=1}^n \int_X
            \big( g(\cdot,x') - \hat{G}(\pi)_{i,i'} \big)^2
            \dx \pi_{i'}(x'),
            $
            \State
            $\pi_i \gets (1-t)\,\pi_i + t\,\xi|_{V_i}$
            for a Voronoi partition 
            $(V_i)_{i=1}^n$ 
            of $X$ 
            with respect to $(\hat{c}_i^{(\pi)})_{i=1}^n$ 
            and a step size $t \in [0,1]$,
            \State 
            $\upsilon_i \gets \pi_i(X)$
	    \EndWhile
        \State \textbf{Output:} Quantizer $\YY=([n],\hat G(\pi),\upsilon)$, coupling $\pi \in \Pi(\xi,\upsilon)$
        \Comment{{\footnotesize $\hat{G}(\pi) = 0$ outside $\supp(\upsilon)^2$}}
	\end{algorithmic}
	\caption{Conditional Gradient for GW Quantization}
	\label{alg:1}
\end{algorithm}
While the initial idea for the algorithm arose as an analogue to Lloyd's algorithm, we formally derive the algorithm as a conditional gradient method
of a reduced objective $R$ of $Q_{\GW}$ 
defined below. The algorithm is stated for generic convergence criteria. For our experiments, we observe the Voronoi partitions becoming stationary fairly quickly, which we use as the convergence criterion. Regarding the choice of the step size, see the discussion after \cref{prop:R_directional_derivative}.

Consider an input gm-space
$\XX \coloneqq (X,g,\xi)$
and fix the support size of
the quantization $n \in \N$.
We define the gauge-reduced objective
\begin{align}\label{eq:R_def}
R(\pi) \coloneqq 
\inf_{G \in \G_{\sym}^{(n)}} 
Q_{\GW}(G, \pi), 
\qquad \pi \in \Pi(\xi, *_n).
\end{align}
For every 
$\pi \in \Pi(\xi, *_n)$ 
with $\pi_i(X) > 0$, $i \in [n]$, 
the minimization in \cref{eq:R_def} 
is uniquely attained at
\[
\hat G(\pi)_{i,i'} \coloneqq 
\frac{1}{\pi_i(X)\,\pi_{i'}(X)} \iint_{X^2} g \dx\pi_i \dx\pi_{i'},
\qquad i,i' \in [n],
\]
see \cref{thm:pointwise_lb_gauge}.
Thus,
$R(\pi) = Q_{\GW}\bigl(\hat G(\pi), (\pi_i)_{i=1}^n\bigr)$ 
and $\qGW(\XX)^2 = \inf_{\pi \in \Pi(\xi, *_n)} R(\pi)$. 
To analyze the gradient of $R$, 
set
\[
\hat{c}_i^{(\pi)}(x) \coloneqq 
\sum_{i'=1}^n \int_X \bigl( g(x,x') - \hat G(\pi)_{i,i'} \bigr)^2 \dx\pi_{i'}(x'),
\qquad i \in [n].
\]
Since $\pi_{i'} \leq \xi$, $i' \in [n]$,
and $g \in L^2(\xi \otimes \xi)$, 
we have $\hat c_i^{(\pi)} \in L^1(\xi)$. 

The 
directional derivative 
of the reduced objective $R$
can be derived similarly to the one
of the standard
GW objective
\cite{PCS2016, vayer2020fused}.

\begin{proposition}\label{prop:R_directional_derivative}
    Let $\XX = (X,g,\xi) \in \GM$, $n \in \N$, 
    and let $\pi \in \Pi(\xi, *_n)$ with $\pi_i(X) > 0$ for all $i \in [n]$. 
    Then, for every $\eta \in \Pi(\xi, *_n)$, 
    the one-sided directional derivative of $R$ at $\pi$ exists and equals
    \[
    \lim_{t \downarrow 0} 
    \frac{R\bigl((1-t)\pi + t\eta\bigr) - R(\pi)}{t}
    = 2 \sum_{i=1}^n \int_X \hat c_i^{(\pi)} \dx(\eta_i - \pi_i).
    \]
\end{proposition}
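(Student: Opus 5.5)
Write $\pi^t \coloneqq (1-t)\pi + t\eta$ for $t\in[0,1]$. The plan is to use the variational characterization $R(\pi^t)=\min_G Q_{\GW}(G,\pi^t)$ and an envelope-type argument, with sandwich bounds so that no differentiability of $\hat G$ is needed. Since $\pi_i(X)>0$, we have $\pi^t_i(X)\ge (1-t)\pi_i(X)>0$ for $t<1$. Hence \cref{thm:pointwise_lb_gauge} applies: $R(\pi^t)=Q_{\GW}(\hat G(\pi^t),\pi^t)$ with $\hat G(\pi^t)$ given by the explicit formula \cref{eq:optimal_G_for_GW_quant}.

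For fixed $G$, the map $t\mapsto Q_{\GW}(G,\pi^t)$ is a quadratic polynomial in $t$, by bilinearity of $(\pi,\pi')\mapsto\sum_{i,i'}\iint (g-G_{i,i'})^2\dx\pi_i\dx\pi'_{i'}$. Its coefficients are finite since all slices are bounded by $\xi$ and $(g-G_{i,i'})^2\in L^1(\xi\otimes\xi)$. Expanding and using the symmetry of $g$ and $G$ gives
\[
Q_{\GW}(G,\pi^t)=Q_{\GW}(G,\pi)+2t\sum_{i}\int_X c_i^{G,\pi}\dx(\eta_i-\pi_i)+t^2 Q_{\GW}(G,\eta-\pi),
\]
where $c_i^{G,\pi}(x)\coloneqq\sum_{i'}\int_X(g(x,x')-G_{i,i'})^2\dx\pi_{i'}(x')$. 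Note that $c_i^{\hat G(\pi),\pi}=\hat c_i^{(\pi)}$.

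The upper bound is immediate. Plugging in $G=\hat G(\pi)$ gives $R(\pi^t)\le Q_{\GW}(\hat G(\pi),\pi^t)$, so
\[
\limsup_{t\downarrow0}\frac{R(\pi^t)-R(\pi)}{t}\le 2\sum_i\int\hat c_i^{(\pi)}\dx(\eta_i-\pi_i).
\]
For the lower bound, set $G_t\coloneqq\hat G(\pi^t)$. Then $R(\pi)\le Q_{\GW}(G_t,\pi)$, and therefore
\[
\frac{R(\pi^t)-R(\pi)}{t}\ge 2\sum_i\int c_i^{G_t,\pi}\dx(\eta_i-\pi_i)+t\,Q_{\GW}(G_t,\eta-\pi).
\]
It then suffices to show $G_t\to\hat G(\pi)$ as $t\downarrow0$. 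This is clear from the explicit formula: numerator and denominator are affine and quadratic in $t$, respectively, and the denominators $\pi^t_i(X)\pi^t_{i'}(X)$ stay bounded away from $0$ for small $t$. In particular, $G_t$ remains bounded.

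The main obstacle is justifying the limits in the lower bound, namely the continuity of $G\mapsto c_i^{G,\pi}$ integrated against the signed measure $\eta_i-\pi_i$, together with boundedness of the $t\,Q_{\GW}(G_t,\eta-\pi)$ term. Writing $(g-G)^2=g^2-2Gg+G^2$ shows that $c_i^{G,\pi}$ depends polynomially on the entries of $G$, with coefficients $\int g^2(\cdot,x')\dx\pi_{i'}$, $\int g(\cdot,x')\dx\pi_{i'}$ and $\pi_{i'}(X)$. Each of these is integrable against $|\eta_i-\pi_i|\le2\xi$, since $g\in L^2(\xi\otimes\xi)\subset L^1$. Hence $\sum_i\int c_i^{G,\pi}\dx(\eta_i-\pi_i)$ is a polynomial in $G$, and thus continuous. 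Likewise, $Q_{\GW}(G_t,\eta-\pi)$ is bounded uniformly for bounded $G_t$. Letting $t\downarrow0$ makes the $\liminf$ match the upper bound, which proves that the one-sided derivative exists and equals the stated formula.
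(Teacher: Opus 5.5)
Your argument is correct, but it takes a different route from the paper. The paper differentiates $t\mapsto R(\pi_t)=Q_{\GW}(\hat G(\pi_t),\pi_t)$ directly. It observes that each entry of $\hat G(\pi_t)$ is a rational function of $t$ with non-vanishing denominator and applies the chain rule. It then discards the term $\partial_G Q_{\GW}(\hat G(\pi_t),\pi_t)[\tfrac{\mathrm d}{\mathrm dt}\hat G(\pi_t)]$ by first-order optimality of the block-mean gauge, so only the partial derivative in $\pi$ remains. You instead use a Danskin-type squeeze: you freeze the gauge at $\hat G(\pi)$ for the upper bound and at $\hat G(\pi^t)$ for the lower bound. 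This needs only continuity of $t\mapsto\hat G(\pi^t)$ at $t=0$ and the polynomial dependence of $G\mapsto\sum_i\int_X c_i^{G,\pi}\dx(\eta_i-\pi_i)$ on the entries of $G$. The paper's route yields more: differentiability of $t\mapsto R(\pi_t)$ on all of $[0,1)$, with the derivative formula at every $t$. It does rely on joint (not just separate) smoothness of $(G,t)\mapsto Q_{\GW}(G,\pi_t)$, which holds because this is a polynomial in $t$ and the entries of $G$. Your route is more elementary and makes the integrability explicit. It would still work whenever the optimal gauge merely depends continuously on $t$. Its upper-bound half is also exactly the inequality $R(\pi^{(k+1)})\le Q_{\GW}(\hat G(\pi^{(k)}),\pi^{(k+1)})$ that drives the line search of \cref{rem:line_search} and the proof of \cref{thm:convergence}. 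One small slip: the numerator $\iint_{X^2} g\dx\pi^t_i\dx\pi^t_{i'}$ of $\hat G(\pi^t)_{i,i'}$ is quadratic, not affine, in $t$. This is harmless, since you only use continuity at $t=0$ with denominators bounded away from zero.
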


The proof is given in \cref{subapp:proof_of_6_1}. 
It relies on the fact that,
along the segment
$\pi_t = (1-t)\pi + t\eta$, 
the block-mean gauge $\hat{G}(\pi_t)$ is a ratio of
polynomials of $t$ with non-vanishing denominator,
and thus differentiable.
Differentiating $R$ at $t = 0$
and simplifying yields the stated linear form.

We minimize $R$ 
over the convex set 
$\Pi(\xi, *_n)$ 
by conditional gradient descent: 
at a current 
$\pi$ 
we move toward a minimizer of the linearization 
of $R$, 
that is, 
of the directional derivative 
of \cref{prop:R_directional_derivative}. 
The linear functional 
$\eta \mapsto \sum_{i=1}^n \int_X \hat c_i^{(\pi)} \dx\eta_i$ 
is minimized over $\Pi(\xi, *_n)$ 
exactly by those $\eta$, 
such that $\eta_i$ is concentrated on 
$\{ x \in X : \hat c_i^{(\pi)}(x) = \min_j \hat c_j^{(\pi)}(x)\}$
in particular by the Voronoi assignment $(\xi|_{V_i})_{i=1}^n$ 
(cf.\ \cref{lem:linear_assignment}),
 $i \in [n]$.
Hence \cref{alg:1} is the conditional-gradient method for 
$\min_{\pi \in \Pi(\xi, *_n)} R(\pi)$: 
line 4 evaluates the gradient through $\hat G(\pi)$, 
lines 5--6 determine a minimizer $\xi|_{V_i}$ of the linearization, 
and the convex combination in line 6 is the conditional-gradient step.

Taking the unit step size ($t = 1$)
in line 6 
ensures the hard assignment 
$\pi_i = \xi|_{V_i}$ 
which may be of particular interest 
when seeking to cluster $\XX$.
In this case, 
it may happen that clusters attain zero mass, 
i.e.\ $\upsilon_i = 0$.
This contrasts the case $t \in [0,1)$
where clusters masses remain positive throughout.
We also note, 
that the unit step $t=1$ need not decrease
$R = Q_{\GW}(\hat{G}(\cdot),\cdot)$ in general.
More precisely,
by
\cref{prop:R_directional_derivative} 
the conditional-gradient direction is one
of descent, 
so $R$ decreases for small $t>0$, 
but without concavity the full
step may increase it.

\begin{remark}[Line Search]\label{rem:line_search}
    Let $(V_i)_{i=1}^n$ be the Voronoi partition
    of $X$ with respect to $(\hat{c}_i^{(\pi)})_{i=1}^n$.
    A monotone decrease of $R$ 
    is guaranteed 
    by the line search 
    \[
    t \in \argmin_{s \in [0,1]} 
    Q_{\GW}\bigl(\hat G(\pi), \pi + s(\eta - \pi)\bigr), 
    \qquad 
    \eta_i \coloneqq \xi|_{V_i}, 
    \quad 
    i \in [n].
    \]
    This is the minimization 
    of the quadratic 
    functional
    $s \mapsto R(\pi) - s\, b + s^2 a$,
    where
    \[
    b \coloneqq -2\sum_{i=1}^n 
    \int_X \hat c_i^{(\pi)} 
    \dx (\eta_i - \pi_i), 
    \quad 
    a \coloneqq 
    \sum_{i,i'=1}^n \iint_{X^2} 
    \bigl( g - \hat G(\pi)_{i,i'} \bigr)^2 
    \dx (\eta_i - \pi_i) \dx (\eta_{i'} - \pi_{i'}).
    \]
    Note that $b \geq 0$ 
    since $\eta$ minimizes the linearization. 
    Hence $t = 1$ if $a \leq 0$ 
    and $t = \min\{1, b/(2a)\}$ if $a > 0$, 
    and in either case $R\bigl((1-t)\pi + t\eta\bigr) \leq R(\pi)$.
\end{remark}

We can use the line search as described in \cref{rem:line_search}
to produce a sequence $(\pi^{(k)})_{k \in \N}$ 
which monotonically decreases
$R$, that is
\[
R(\pi^{(k+1)}) \leq R(\pi^{(k)}).
\]
Since $R$ is non-negative, 
this ensures convergence of
$(R(\pi^{(k)}))_{k \in \N}$.
At the same time,
since $\Pi(\xi, *_n)$ is weakly compact,
the sequence $(\pi^{(k)})_{k \in \N}$
admits cluster points.
Under certain assumptions,
these cluster points 
are also stationary for $R$ 
as the following result shows.

\begin{theorem}\label{thm:convergence}
Let $\XX=(X,g,\xi)\in\GM$
with bounded gauge $g$.
Let $n\in\N$, 
and let
$(\pi^{(k)})_k\subset\Pi(\xi, *_n)$ 
be generated by \cref{alg:1} 
with line search as described in \cref{rem:line_search}.
Assume the iterates satisfy $\pi^{(k)}_i(X)>0$ 
for all $i\in[n]$, $k\in\N$.
Then every cluster point
$\pi^\ast$ of $(\pi^{(k)})_{k \in \N}$ 
with $\pi^\ast_i(X)>0$ 
for all $i\in[n]$ is stationary,
that is, 
no feasible direction is one of descent:
\[
2\sum_{i=1}^n\int_X \hat c_i^{(\pi^\ast)}\dx(\eta_i-\pi^\ast_i)\ \geq\ 0
\qquad\text{for all }\eta\in\Pi(\xi, *_n).
\]
\end{theorem}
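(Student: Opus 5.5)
We run the standard conditional-gradient (Frank--Wolfe) argument: a uniform descent estimate per step, then a limit passage for the Frank--Wolfe gap.

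For $\pi\in\Pi(\xi,*_n)$ with $\pi_i(X)>0$, define the gap
\[
D(\pi)\coloneqq -2\sum_{i=1}^n\int_X \hat c_i^{(\pi)}\dx(\eta^\pi_i-\pi_i)=2\Bigl(\sum_{i=1}^n\int_X \hat c_i^{(\pi)}\dx\pi_i-\int_X \min_j \hat c_j^{(\pi)}\dx\xi\Bigr)\ge 0,
\]
where $\eta^\pi$ is the Voronoi assignment. The equality holds by \cref{lem:linear_assignment}, since $\eta^\pi$ minimizes the linearization over $\Pi(\xi,*_n)$. Stationarity of $\pi^\ast$ is then exactly $D(\pi^\ast)=0$, so it suffices to show $D(\pi^\ast)\le 0$.

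\textbf{Step 1 (sufficient decrease).} We work with the quadratic model in \cref{rem:line_search}. Since $g$ is bounded, say $|g|\le B$, every entry of $\hat G(\pi)$ is also bounded by $B$. Hence $(g-\hat G_{i,i'})^2\le 4B^2$, and since $\sum_i|\eta_i-\pi_i|(X)\le 2$ we get $|a|\le 16B^2=:K$. Minimizing $s\mapsto R(\pi)-sb+s^2a$ over $[0,1]$ gives the value at most $R(\pi)-\min\{b/2,\,b^2/(4K)\}$, where $b=D(\pi)$.

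One subtlety must be handled: the line search in \cref{rem:line_search} minimizes $Q_{\GW}(\hat G(\pi),\cdot)$ with the gauge frozen, and the next iterate is then evaluated under $R$. But $R(\pi^{(k+1)})=\inf_G Q_{\GW}(G,\pi^{(k+1)})\le Q_{\GW}(\hat G(\pi^{(k)}),\pi^{(k+1)})$. Therefore
\[
R(\pi^{(k+1)})\le R(\pi^{(k)})-\min\{D_k/2,\;D_k^2/(4K)\},\qquad D_k\coloneqq D(\pi^{(k)}).
\]
Since $R\ge 0$, summing over $k$ shows $\sum_k \min\{D_k/2,D_k^2/(4K)\}<\infty$, and hence $D_k\to 0$ along the whole sequence.

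\textbf{Step 2 (passing to the limit).} This is the main obstacle: we need $\limsup_j D(\pi^{(k_j)})\ge D(\pi^\ast)$ along a subsequence $\pi^{(k_j)}\weakly\pi^\ast$, and weak convergence alone does not make integrals of the bounded, merely measurable function $g$ converge. The plan is to note that all slices satisfy $\pi_i\le\xi$, so they have densities $f_i=\dx\pi_i/\dx\xi\in[0,1]$ with $\sum_i f_i=1$. On this set, convergence of the $\pi_i$ in the weak topology of measures is equivalent to weak-$*$ convergence of the $f_i$ in $L^\infty(\xi)$: test functions in $C_b$ are dense in $L^1(\xi)$ and the $f_i$ are uniformly bounded.

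With this, the masses $\pi_i(X)=\int f_i\dx\xi$ converge to $\pi^\ast_i(X)>0$. The double integrals $\iint g\, f_i\otimes f_{i'}\dx\xi^{\otimes 2}$ converge because products $f_i\otimes f_{i'}$ of weak-$*$ convergent bounded sequences converge weak-$*$ in $L^\infty(\xi\otimes\xi)$: this holds on product test functions, which are dense in $L^1(\xi\otimes\xi)$. Hence $\hat G(\pi^{(k_j)})\to\hat G(\pi^\ast)$.

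Next, write $h_{i'}(x)=\int_X g(x,x')f_{i'}(x')\dx\xi(x')$. The operator $f\mapsto\int_X g(\cdot,x')f(x')\dx\xi(x')$ is Hilbert--Schmidt on $L^2(\xi)$, hence compact, so it maps weak-$*$ convergent bounded sequences to $L^2(\xi)$-norm convergent ones. Expanding the square, $\hat c_i^{(\pi)}(x)=\int g(x,\cdot)^2f_{\cdot}\dx\xi-2\hat G_{i,\cdot}h_{\cdot}(x)+\dots$ converges in $L^1(\xi)$, and therefore $\min_j \hat c_j^{(\pi^{(k_j)})}\to\min_j\hat c_j^{(\pi^\ast)}$ in $L^1(\xi)$. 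Finally, $\int \hat c_i f_i\dx\xi$ pairs a strongly convergent sequence with a weakly convergent bounded one, so it converges. Thus $D(\pi^{(k_j)})\to D(\pi^\ast)$.

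\textbf{Conclusion.} Combining the two steps, $D(\pi^\ast)=\lim_j D_{k_j}=0$. Since $D(\pi^\ast)$ is the maximal descent over all feasible $\eta$, this gives the claimed stationarity inequality.
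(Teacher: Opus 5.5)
Your proof is correct. Step~1 coincides with the paper's, including the use of $R(\pi^{(k+1)})\le Q_{\GW}(\hat G(\pi^{(k)}),\pi^{(k+1)})$ to handle the frozen gauge. Step~2, however, takes a genuinely different route.

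The paper never proves continuity of the gap. It extracts a further subsequence of the Frank--Wolfe vertices, $\eta^{(k_j)}\weakly\bar\eta$, and uses only \cref{lem:product_convergence} to get $\int_X\hat c_i^{(\pi^{(k_j)})}\dx\sigma_i^{(k_j)}\to\int_X\hat c_i^{(\pi^\ast)}\dx\sigma_i$ for any weakly convergent $\sigma^{(k_j)}$. It applies this with $\sigma^{(k_j)}\in\{\eta^{(k_j)},\pi^{(k_j)}\}$ and with constant sequences $\sigma^{(k_j)}\equiv\eta$. This gives $\sum_i\int_X\hat c_i^{(\pi^\ast)}\dx(\bar\eta_i-\pi^\ast_i)=0$, and also that $\bar\eta$ minimizes the limiting linearization as a limit of minimizers. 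Stationarity follows.

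You instead show $D(\pi^{(k_j)})\to D(\pi^\ast)$. This forces you to control $\int_X\min_j\hat c_j^{(\pi)}\dx\xi$, hence to upgrade to strong $L^1(\xi)$-convergence of the costs via compactness of the Hilbert--Schmidt operator with kernel $g$. The $g^2$-term is harmless here, since $\sum_{i'}\int_X g(x,x')^2\dx\pi_{i'}(x')=\int_X g(x,x')^2\dx\xi(x')$ does not depend on $\pi$.

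Your route costs more functional analysis but buys a stronger by-product: sequential weak continuity of the Frank--Wolfe gap on couplings with positive slice masses. The paper's route needs only weak-type convergence of bilinear integrals.

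If you only want the inequality you flagged, $\limsup_j D(\pi^{(k_j)})\ge D(\pi^\ast)$, upper semicontinuity of $\pi\mapsto\int_X\min_j\hat c_j^{(\pi)}\dx\xi$ suffices. It follows from $\int_X\min_j\hat c_j^{(\pi^{(k_j)})}\dx\xi\le\sum_i\int_X\hat c_i^{(\pi^{(k_j)})}\dx\eta^\ast_i$, with $\eta^\ast$ a fixed Voronoi assignment at $\pi^\ast$. That bound again needs only \cref{lem:product_convergence}, so the compactness argument can be dropped entirely.
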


The proof is given in \cref{subapp:proof_of_6_3} and follows the standard
conditional-gradient (Frank--Wolfe) argument.
More precisely,
the line search enforces monotone
decrease of $R$, so by \cref{prop:R_directional_derivative} no feasible
direction at a cluster point is one of descent.
Note that the positivity assumption 
$\pi^{(k)}_i(X)>0$, $i\in[n]$, $k\in\N$
is satisfied whenever the step sizes are
strictly less than $1$
when starting from an initial $\pi \in \Pi(\xi, *_n)$
with $\pi_i(X) > 0$, for all $i \in [n]$. 

We remarked before that for the unit step size,
monotone decrease of $R$ is not ensured.
The following result shows that
the scalar product case is an exception, 
where concavity of $R$ renders the unit step monotone.

\begin{corollary}\label{cor:scalar_product_convergence}
    Let $X \subset \R^d$ and $g = \langle \cdot, \cdot \rangle$. 
    Then $R$ is concave on $\Pi(\xi, *_n)$.
    Let $(\pi^{(k)})_{k \in \N}$ 
    be produced by \cref{alg:1} 
    with unit step size
    $t = 1$, and assume 
    $\pi^{(k)}_i(X) > 0$ for all 
    $i \in [n]$, $k \in \N$.
    Then
    \[
    R(\pi^{(k+1)}) \leq R(\pi^{(k)}), 
    \qquad k \in \N.
    \]
    If moreover $X$ is finite, 
    the iteration reaches, in finitely many steps, 
    a coupling $\pi^\star$ such that,
    for each $i \in [n]$, 
    $\pi^\star_i$ is concentrated on 
    $\{ x \in X : \hat c_i^{(\pi^\star)}(x) = \min_j \hat c_j^{(\pi^\star)}(x) \}$.
\end{corollary}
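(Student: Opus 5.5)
We first make $R$ explicit for the scalar-product gauge. For $\pi$ with $\pi_i(X)>0$, write $m_i\coloneqq\int_X x\dx\pi_i(x)$, the unnormalized cell moment, and $w_i\coloneqq\pi_i(X)$. Then $\iint g\dx\pi_i\dx\pi_{i'}=\langle m_i,m_{i'}\rangle$, so $\hat G(\pi)_{i,i'}=\langle m_i,m_{i'}\rangle/(w_iw_{i'})$. By \cref{thm:pointwise_lb_gauge}, or the same computation as \cref{thm:GW_quantization_existence_characterization}\ref{item:thm35_value}, we get
\[
R(\pi)=\iint_{X^2}\langle x,x'\rangle^2\dx\xi\dx\xi-\sum_{i,i'=1}^n\frac{\langle m_i,m_{i'}\rangle^2}{w_iw_{i'}}
=\|M_\xi\|_F^2-\Bigl\|\sum_{i=1}^n\frac{m_im_i^\top}{w_i}\Bigr\|_F^2 .
\]
The first term is constant on $\Pi(\xi,*_n)$. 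Concavity therefore reduces to convexity of $\Phi(\pi)\coloneqq\|S(\pi)\|_F^2$, where $S(\pi)\coloneqq\sum_i m_im_i^\top/w_i$.

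Each summand $(m,w)\mapsto mm^\top/w$ is the matrix perspective of $m\mapsto mm^\top$, hence jointly matrix-convex in the Loewner order on $\{w>0\}$. Since $(m_i,w_i)$ depends linearly on $\pi$, $S$ is Loewner-convex along segments. We also have $0\preceq S(\pi)$. The map $A\mapsto\|A\|_F^2=\tr(A^2)$ is convex and monotone on positive semidefinite matrices: if $0\preceq A\preceq B$, then $\tr(A^2)\le\tr(AB)\le\tr(B^2)$. A monotone convex function composed with a Loewner-convex map is convex. Hence $\Phi$ is convex and $R$ is concave on the relative interior where all $w_i>0$. Points with some $w_i=0$ are handled by continuity of $R$, with the convention that the $i$-th term vanishes, and the bound $mm^\top/w\to 0$ there. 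I expect this step, namely the monotonicity plus convexity composition argument, to be the main obstacle to write cleanly.

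For monotonicity, let $\eta$ be the Voronoi assignment from line 6. Concavity along the segment together with \cref{prop:R_directional_derivative} gives
\[
R(\eta)\le R(\pi)+2\sum_i\int_X\hat c_i^{(\pi)}\dx(\eta_i-\pi_i).
\]
The sum is $\le 0$ because $\eta$ minimizes the linearization (\cref{lem:linear_assignment}). So $R(\pi^{(k+1)})\le R(\pi^{(k)})$.

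If $X$ is finite, each iterate with $t=1$ is a hard assignment $\xi|_{V_i}$, and there are finitely many partitions of $X$ into $n$ cells. The values $R(\pi^{(k)})$ are nonincreasing. Whenever $\pi^{(k)}$ is not concentrated on its own argmin sets, pick the Voronoi assignment $\eta$. Its linearization gap is strictly negative, since $\pi^{(k)}$ puts positive mass where $\hat c_i$ is not minimal and $X$ is finite, so every point has positive mass. Then $R$ strictly decreases. A strict decrease cannot repeat infinitely often over finitely many partitions, so after finitely many steps we reach $\pi^\star$ whose slices are concentrated on the argmin sets.
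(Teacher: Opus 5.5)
Your proposal is correct and follows essentially the same steps as the paper. It derives concavity from $R(\pi)=\|M_\xi\|_F^2-\|M(\pi)\|_F^2$ (your $S(\pi)$ is the paper's $M(\pi)$), using Loewner convexity of the matrix-fractional terms and monotonicity of $\|\cdot\|_F^2$ on the PSD cone, as in \cref{lem:scalar_reduction} and \cref{lem:M_convex_continuous}. The only difference is that the paper checks the indices with vanishing mass by hand, whereas you use continuity along segments. It then obtains the descent bound from concavity, \cref{prop:R_directional_derivative} and \cref{lem:linear_assignment}, and finishes with strict decrease over the finitely many hard assignments.

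One small nit: the strict gap follows directly from the ``if and only if'' in \cref{lem:linear_assignment}, since failing to be concentrated on $A_i$ means $\pi_i(X\setminus A_i)>0$. So your remark that every point of a finite $X$ carries positive mass is not needed, and it need not hold.
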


The proof is given in \cref{subapp:proof_of_6_4}.
Concavity bounds the unit-step increment by the directional derivative
of \cref{prop:R_directional_derivative},
which is non-positive since the update minimizes the linearization
(\cref{lem:linear_assignment}), giving monotone decrease.
For finite $X$ the iterates are polytope vertices across which $R$
strictly decreases, forcing termination.

\section{Numerical Examples}\label{sec:numerics}
In the following, we provide various numerical experiments. 
These experiments serve primarily three goals: 
First, showcase that the proposed GW quantization is numerically feasible and leads to structurally different results compared to normal Wasserstein clustering (cf.~particularly \Cref{subsec:exp-mesh}). Second, show that the added modeling freedom provided by gm-spaces is useful for certain applications and that the quantization problem can also be solved effectively in such settings (cf.~\Cref{subsec:pruningnn}). And third, show that in certain simple cases, the proposed algorithm leads to approximation rates in line with the theory for optimal quantization (cf.~\Cref{subsec:numerics_rates}) and that the approximation quality leads to desirable benefits in downstream tasks (cf.~\Cref{subsec:accelerated_pairwise}).

We emphasize that all experiments should be considered as proof-of-concept, and not (yet) improving on state-of-the-art performance for important practical tasks. Nevertheless, we believe these experiments nicely showcase the practical possibilities that GW quantization can provide for data science applications.

All experiments are implemented in Python.\footnote{
The source code is publicly available at 
\url{https://github.com/florian-beier/gw-quantization}.
}
In addition to the self-implemented \cref{alg:1}, we use 
the \textsf{fast\_simplification} package for mesh decimation, as well as \textsf{POT}
\cite{flamary2021pot} for some GW distance and coupling
computations. 
Accordingly, the reported values for the $\GW$-problem are numerical approximations of the true values arising from the respective algorithms used in the \textsf{POT} package.

\subsection{Quantization of a 3D Shape}\label{subsec:exp-mesh}

In this example,
we quantize a 3D surface 
and visualize the resulting quantizers 
at decreasing resolutions.
The surface is a 3D registration 
of a human subject taken
from the training part of the
FAUST dataset~\cite{bogo2014faust}.
It consists of $6890$ vertices and $13\,776$ faces.
We regard the surface as a graph 
whose nodes are the mesh vertices and whose
edges are the edges of the triangulation.
Each edge is weighted by the Euclidean distance between
the vertices it connects.
We convert the 3D shape to a gm-space 
$\XX = (X,g,\xi)$ as follows.
The space $X \subset \R^3$ is set as the vertex set,
the gauge $g$ consists of
the pairwise Dijkstra distances
on the weighted graph,
and $\xi$ is the normalized area-weighted vertex measure, 
i.e.\ the mass of a vertex is proportional to
the total area of its incident triangles.
Note that the gauge is an approximation 
of the geodesic distance on the original surface.

We run \cref{alg:1} for various $n < 6890$
with unit step size $t = 1$.
To initialize the method,
we construct a subset $X_0 \subset X$
by greedily adding
farthest points 
from the current $X_0$.
More precisely,
starting with $X_0 = \{x_1\}$ 
for some initial choice $x_1$,
we add any element from
\[
\argmax_{x \in X} \; \min_{x' \in X_0} g(x, x'),
\]
to $X_0$.
This step is repeated 
until $X_0$ has $n$ distinct elements, 
that is, $X_0 = \{x_1,\dotsc,x_n\}$.
Then \cref{alg:1} is initialized 
with $\pi_i = \xi \vert_{V_i}$,
$i \in [n]$,
where $(V_i)_{i=1}^n$ 
is the Voronoi partition of $X$
with respect to $(g(\cdot,x_i))_{i=1}^n$.
We also apply the Wasserstein quantization
algorithm (Euclidean $k$-means) 
for the same values of $n$
and the same initialization $\pi$.
Both algorithms converge after fewer than 100 iterations,
and return a partitioning 
$(V_i)_{i=1}^n$ of the vertex set $X$
for each $n$.
From the partitions a 3D graph structure 
is reconstructed
as follows.
We associate each cluster $V_i$
with its mean, 
that is
\[
y_i \coloneqq \frac{1}{\xi(V_i)}\sum_{x \in V_i} \xi(\{x\})\, x,
\qquad i \in [n],
\]
and we place an edge between 
$y_i$ and $y_j$, $i \neq j$, 
whenever there exist
$x_i \in V_i$ and $x_j \in V_j$ 
that are connected by an edge 
in the original graph.
The resulting 3D graphs for both methods and various $n$
are shown in \cref{fig:shape_quantization}.

\begin{figure}[t]
  \centering
  \includegraphics[width=\textwidth]{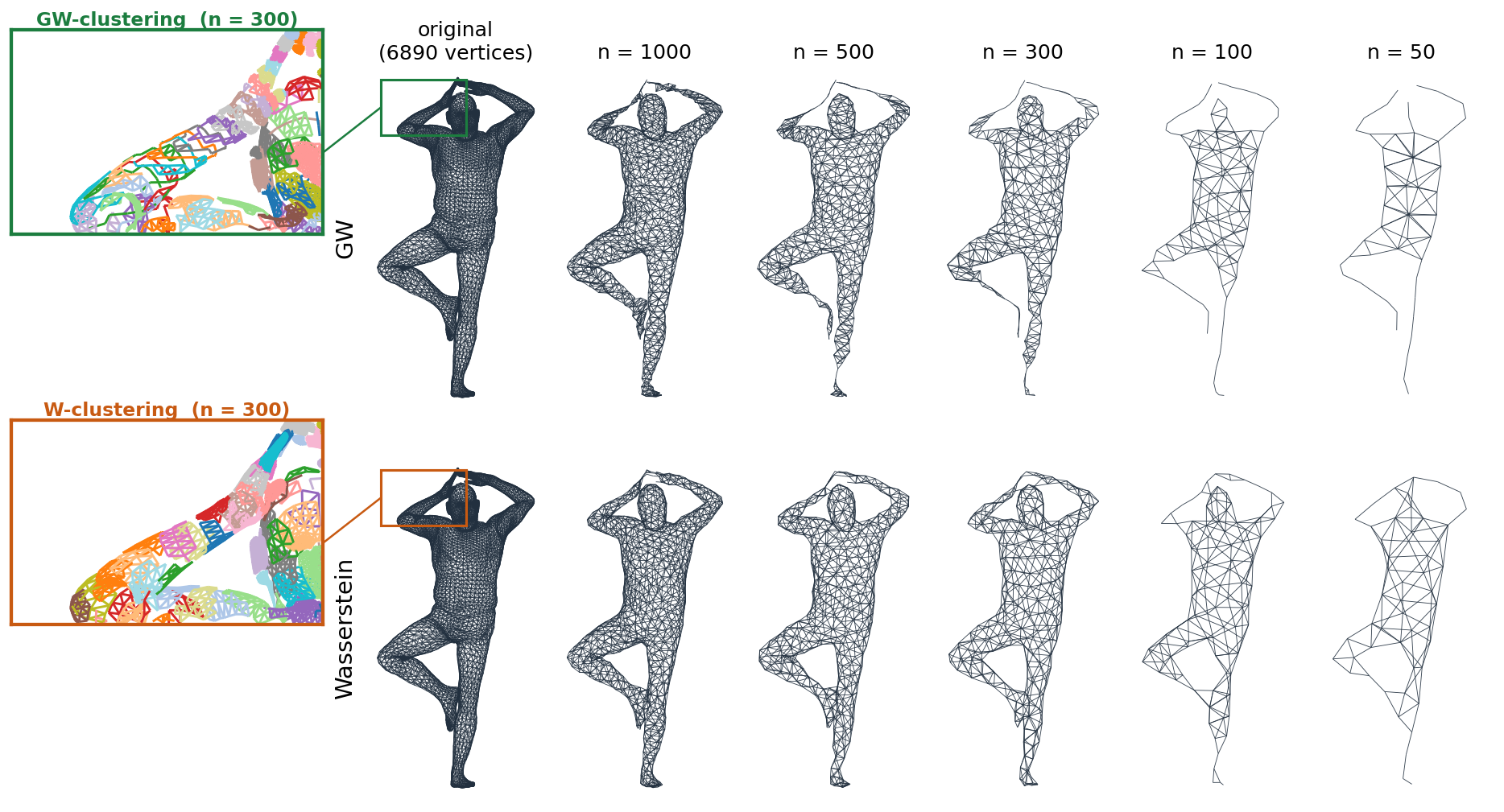}
  \caption{Progressive quantization of a 3D surface
  for $n = 1000, 500, 300, 100, 50$
  together with the original.
  Top: GW
  quantizers with geodesic gauge. 
  Bottom: Wasserstein quantizers with respect to the Euclidean metric.
  Insets (left): a magnified view of the subject's right forearm and hand at
  $n=300$, with each mesh edge colored by the cluster shared by its two
  endpoints -- GW (green frame) versus Wasserstein (orange frame).
  }
  \label{fig:shape_quantization}
\end{figure}

The subject exhibits multiple (almost) self-contacts
between the hands and head,
as well as between both legs.
We notice that the Wasserstein approach merges
the regions of self-contact across all quantizations,
drastically altering the topology of the induced graph.
This is due to the fact
that vertices of these near self-contact regions
possess small Euclidean distances to
vertices of the opposing contact region, 
making them likely to be clustered together.
In contrast,
the GW approach allows us to set the focus
to cluster according to the geodesic
(rather than Euclidean) distance,
so that these self-contact regions are not merged.
We deliberately chose a pose with near self-contacts 
for this experiment
in order to illustrate the distinct behavior of the two methods.
The figure also shows that the GW-based approach
seems to quantize the limbs of the subject
to a one-dimensional geometry faster
than the Wasserstein-based approach with decreasing $n$.
This effect can be seen well when considering the results for $n=300$.
Here, the forearms and hands, 
as well as the lower legs and feet
are one-dimensional lines in the GW case,
but are largely still 3D in the Wasserstein case.
The insets of \cref{fig:shape_quantization} make this explicit for the forearm:
colouring each mesh edge by the cluster of its endpoints, the GW clusters appear
as bands wrapping successive cross-sections, whereas the Wasserstein clusters are
compact patches that ignore the circumference.
Since a limb is thin compared to its length, 
the GW quantization can
collapse each cross-section at almost no cost. 
More precisely,
two points
$x, x' \in X$ on opposite sides of a thin cross-section 
have almost identical
gauge profiles, $g(x,\cdot) \approx g(x',\cdot)$. 
There is thus an incentive to
cluster cross-sections, 
which roughly preserves the distances along the limb.
The Wasserstein quantization objective, 
in contrast,
sees only ambient position
and forms compact three-dimensional clusters. 
Thereby the full thickness of a
limb is retained until $n$ is small enough
that the diameter of these clusters
exceeds the limb thickness.

\subsection{Accelerated Pairwise Distance Computation}\label{subsec:accelerated_pairwise}

In this example,
we show how \cref{alg:1} can be used to
accelerate the computation of all pairwise GW
distances within a dataset of 3D shapes.
We turn to the
deformation-transfer dataset of Sumner and
Popovi\'c~\cite{mesh3d_animals}
and extract exactly one triangulated mesh
of each of the 8 classes
camel, cat, elephant, horse, lion, face, head, flamingo.
We pre-process each mesh and extract a gm-space similarly to \Cref{subsec:exp-mesh}.
More precisely, each mesh is
decimated to approximately 
1000 vertices,
see \cref{fig:mesh_gallery}.
\begin{figure}[t]
  \centering
  \includegraphics[width=0.7\textwidth]{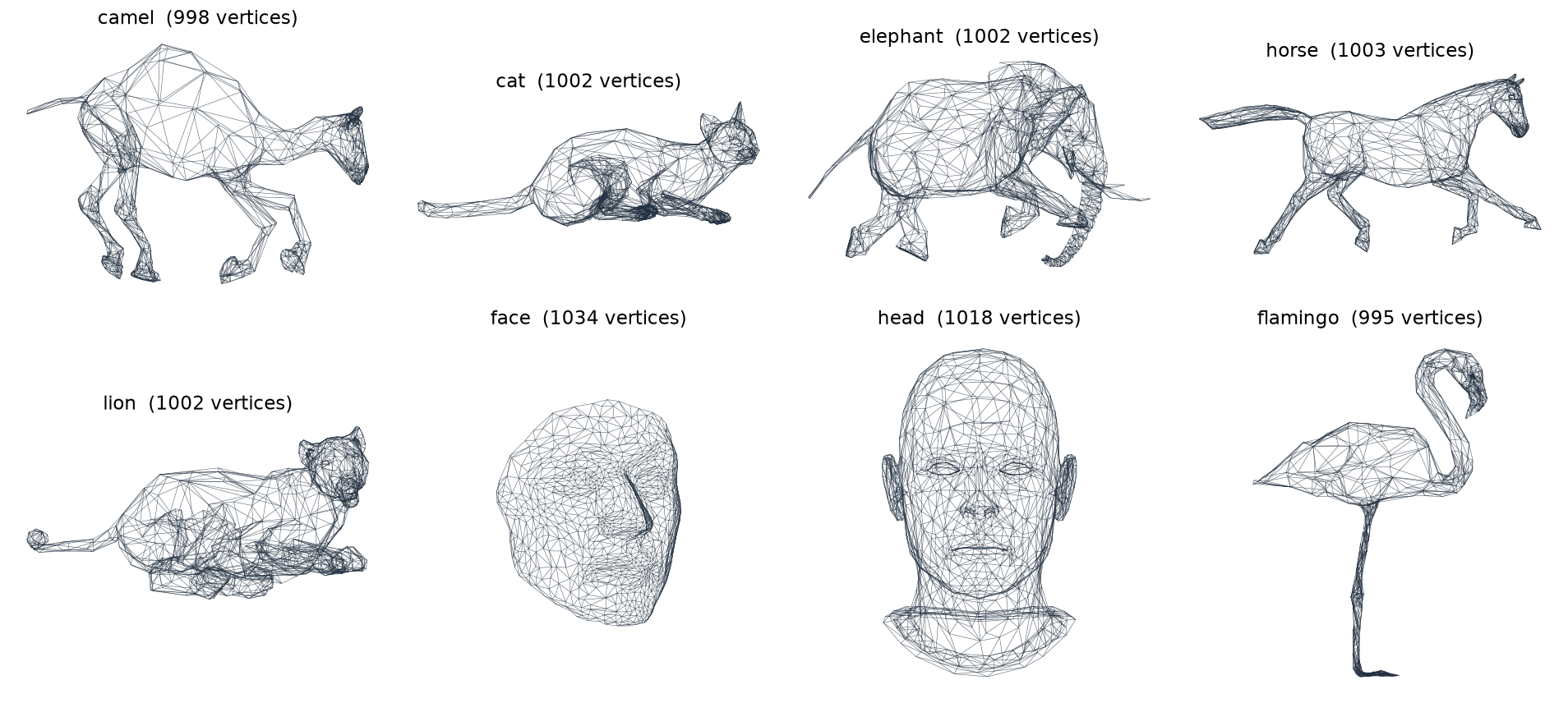}
  \caption{A set of 8 decimated 3D surfaces from Sumner and
Popovi\'c~\cite{mesh3d_animals}.}
  \label{fig:mesh_gallery}
\end{figure}
Then,
for each of the 8 meshes, 
we extract
a gm-space $\XX_k = (X_k,g_k,\xi_k)$,
where 
$X_k \subset \R^3$ are the vertex positions,
$g_k$ are the normalized pairwise Dijkstra distances of 
vertices within the 
graph induced by the mesh
and $\xi_k$ is 
the normalized area-weighted vertex measure
on $X_k$,
$k = 1,\dotsc,8$.
We consider the task of computing all
pairwise distances 
between $\XX_1,\dotsc,\XX_8$
which amounts to 
$\binom{8}{2} = 28$
GW computations.
We use our quantization machinery 
to reduce the computational cost.
More precisely,
fixing some $n < 1000$,
we propose to quantize each object
$\XX_k$ using \cref{alg:1},
yielding some $\YY_k$, 
$k=1,\dotsc,8$.
The initialization of the algorithm is
exactly done as in \Cref{subsec:exp-mesh}.
Then we approximate 
\begin{equation}\label{eq:pairwise_approximation}
\GW_2(\XX_k,\XX_l)
\approx
\GW_2(\YY_k,\YY_l),
\qquad k,l = 1,\dotsc,8.
\end{equation}
Since the quantizations
are much smaller than the original spaces,
we expect a significant speed-up 
at the cost of some approximation error.
Before proceeding,
we remark that the latter 
is approximately bounded.
Indeed, 
for arbitrary quantizers $\YY_k$, 
the triangle and reverse triangle
inequalities yield
\begin{align*}
\lvert \GW_2(\XX_k,\XX_l) - \GW_2(\YY_k,\YY_l) \rvert
\! &\leq \! 
\lvert \GW_2(\XX_k,\XX_l) - \GW_2(\XX_k,\YY_l) \rvert
\!+\! \lvert \GW_2(\XX_k,\YY_l) - \GW_2(\YY_k,\YY_l) \rvert 
\\
&\leq \GW_2(\XX_k,\YY_k) + \GW_2(\XX_l,\YY_l).
\end{align*}
The right-hand side is the total quantization cost 
of the two shapes, 
which is
returned as a byproduct of \cref{alg:1}.
It upper-bounds
$\qGW(\XX_k)+\qGW(\XX_l)$ 
and thus provides an a-priori error 
bound\footnote{
In our experiments the numerical analogue to this bound turns out to be conservative:
the observed errors are usually about an order of magnitude below it.
}
for
\cref{eq:pairwise_approximation}.

\begin{figure}[t]
  \centering
  \includegraphics[width=0.6\textwidth]{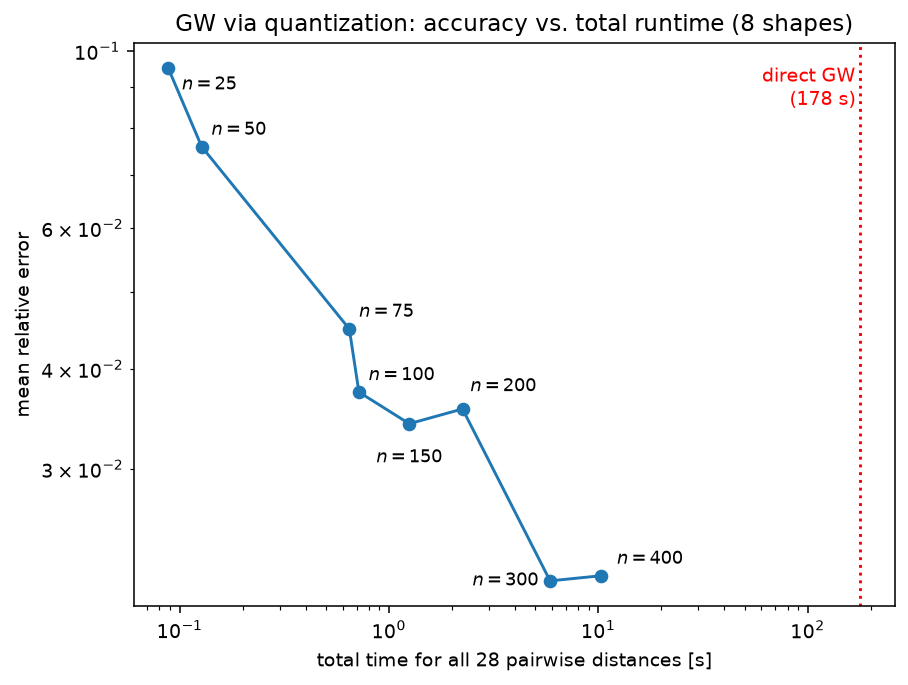}
    \caption{Total runtime (x-axis) versus mean relative error (y-axis)
    for approximating all $28$
    pairwise $\GW_2$ distances by quantization to $n$ points.
    Each marker corresponds to one value of $n$; the dotted red
    line is the direct full-resolution baseline.}
    \label{fig:pairwise_speedup}
\end{figure}

As a baseline, 
we compute the distances
$(\GW_2(\XX_k,\XX_l))_{1 \leq k < l \leq 8}$
directly,
which takes 178 seconds.
We apply the described methodology
for $n = 25,50,75,100,150,200,300,400$.
In \cref{fig:pairwise_speedup},
we plot the total time of computing all 8 quantizations
$\YY_1,\dotsc,\YY_8$
and all their pairwise GW distances 
$(\GW_2(\YY_k,\YY_l))_{1 \leq k < l \leq 8}$
against
the mean relative error 
\[
\frac{1}{28} 
\sum_{1\leq k < l \leq 8}
\frac{
\lvert \GW_2(\YY_k,\YY_l) - \GW_2(\XX_k,\XX_l) \rvert
}{
\GW_2(\XX_k,\XX_l)
}.
\]
As expected, 
the relative error is 
generally decreasing for increasing $n$,
where the slight increase at $n=200$
is likely due to the non-convexity of the GW problem, 
whose solver only attains local optima.
In summary,
already for moderate $n$ the quantization approach reduces the
runtime significantly 
while incurring a relative error of
only around two to ten percent.

\subsection{Verification of Bounds and Rates}\label{subsec:numerics_rates}

In \cref{sec:euclidean}, under suitable conditions
we have shown two-sided bounds of the form
\[
c \cdot \qW(\xi) \leq \qGW(\XX) \leq C \cdot \qW(\xi),
\]
when $\XX = (X,g,\xi) \in \GM$ 
for $X \subset \R^d$,
and $g \in \{\|\cdot - \cdot\|, \|\cdot - \cdot \|^2, \langle \cdot, \cdot \rangle\}$,
for all
$n \in \N$.
The Wasserstein quantization value
$\qW(\xi)$ is always taken 
with respect to the Euclidean distance.
The exact constants $c,C > 0$ 
depend on the choice of gauge $g$
and are summarized in
\cref{tab:sandwich_constants}.

\begin{table}[t]
  \centering
    \caption{Constants in the two-sided bound
  $c\,\qW(\xi)\le\qGW(\XX)\le C\,\qW(\xi)$ for the three Euclidean gauges.
  Here $\lambda_{\min}(\cdot)$ and
  $\lambda_{\max}(\cdot)$ denote the smallest and largest eigenvalue of a matrix,
  $\Sigma_\xi$ is the covariance matrix of $\xi$, and
  $M_\xi$ is its second-moment matrix.}
  \begin{tabular}{c c c}
    \toprule
    $g$ & $c$ & $C$ \\
    \midrule
    $\lVert\cdot-\cdot\rVert$   
    & $\diam(X)^{-1} \cdot \sqrt{\lambda_{\min}(\Sigma_\xi)}$ 
    & $2$ 
    \\
    $\lVert\cdot-\cdot\rVert^2$ 
    & $2\sqrt{\lambda_{\min}(\Sigma_\xi)}$
    & $4\diam(X)$ 
    \\
    $\langle\cdot,\cdot\rangle$ 
    & $\sqrt{\lambda_{\min}(M_\xi)}$ 
    & $\sqrt{2\,\lambda_{\max}(M_\xi)}$
    \\
    \bottomrule
  \end{tabular}
  \label{tab:sandwich_constants}
\end{table}

To showcase the behavior of the bounds in practice,
we sample $N = 5\,000$ points
$x_1,\dotsc,x_N$
from the uniform distribution on 
$[0,1]^d$ for $d = 1,2,3$.
We set
$\XX = (X,g,\xi) \in \GM$,
where
$X \subset [0,1]^d$ is the convex hull of 
$\{x_1,\dotsc,x_N\}$,
$g \in \{\|\cdot - \cdot\|, \|\cdot - \cdot \|^2, \langle \cdot, \cdot \rangle\}$
and
$\xi = \frac{1}{N}\sum_{i=1}^N \delta_{x_i}$.
Note that $\XX$ is covered by the theory of \cref{sec:euclidean} that
concerns the two-sided bounds.
We proceed to run \cref{alg:1}
to approximately compute $\qGW(\XX)$ numerically,
for $n = 8,16,32,64,128,256$,
initialized by taking the Voronoi cells
of a greedy $n$-furthest-point-sampling procedure
on $X$
according to the Euclidean distance, 
cf.\ \Cref{subsec:exp-mesh}.
We also use the classic Lloyd algorithm,
initialized with the same subsampled points,
to approximate $\qW(\xi)$.
In every case, \cref{alg:1} and Lloyd's algorithm
converge in under 200 iterations.
Based on the approximations,
we plot the graph of
$n \mapsto \qGW(\XX)$,
as well as the band
$n \mapsto [c\qW(\xi), C \qW(\xi)]$
for all three gauges 
$g \in \{\|\cdot - \cdot\|, \|\cdot - \cdot \|^2, \langle \cdot, \cdot \rangle\}$
and all choices of dimensions 
$d = 1,2,3$,
as log-log plots in \cref{fig:rates_and_bounds}.

\begin{figure}[t]
  \centering
  \includegraphics[width=0.8\textwidth]{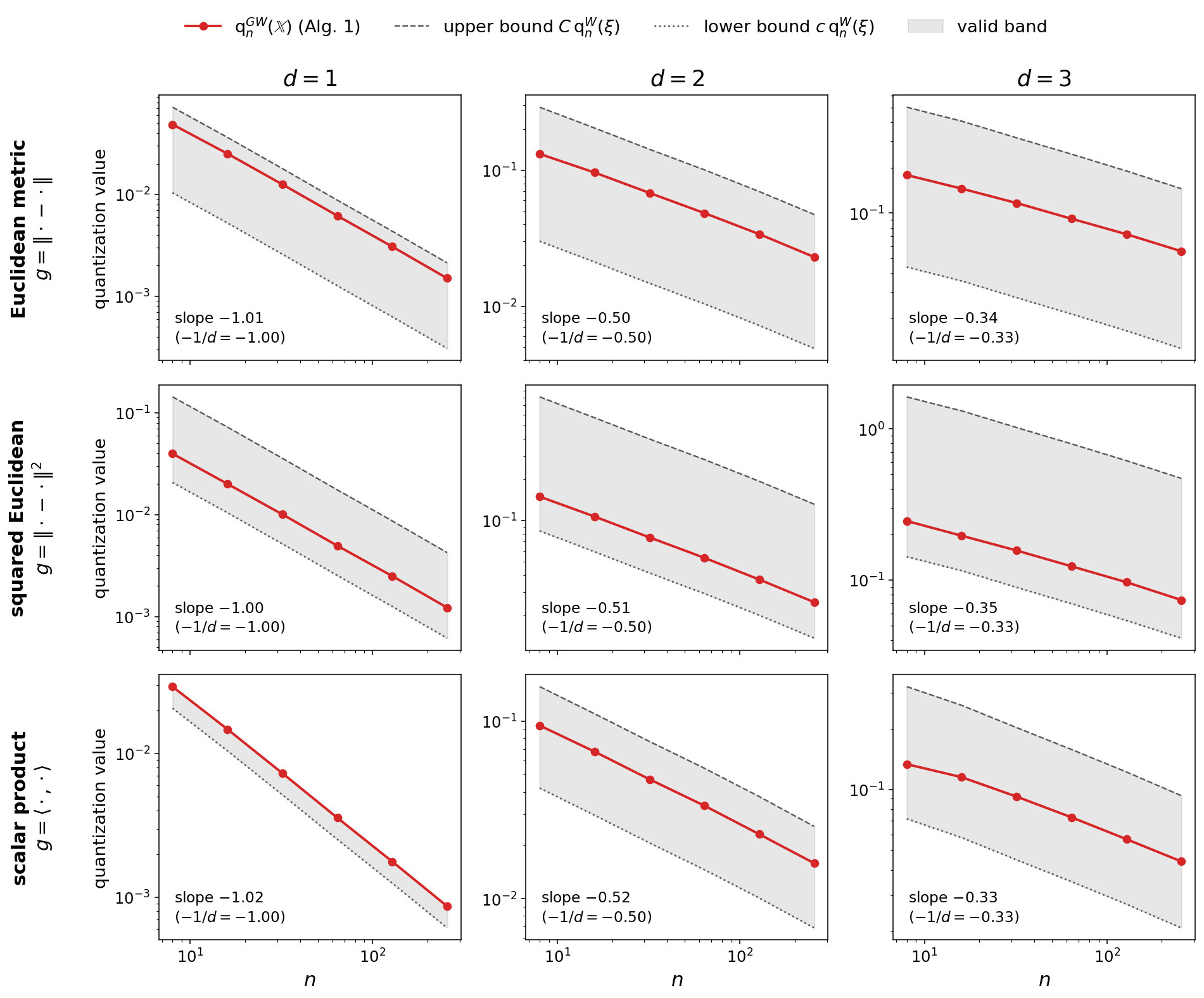}
    \caption{
    Verification of the two-sided bounds and the quantization rate. 
    For
    each dimension $d\in\{1,2,3\}$ (columns) 
    we sample $N=5\,000$ points uniformly
    from $[0,1]^d$ 
    and form $\XX=(X,g,\xi)$ with sampled points $X$, 
    uniform measure $\xi$, 
    and one of the three Euclidean gauges $g$ (rows). 
    Each panel shows
    $\qGW(\XX)$ (approximated via \cref{alg:1}) 
    in red
    against $n$ on log--log axes. 
    It
    lies in the grey band $[\,c\,\qW(\xi),\,C\,\qW(\xi)\,]$
    (constants from \cref{tab:sandwich_constants}, 
    $\qW(\xi)$ approximated via Lloyd's algorithm),
    and its fitted slope approximately matches
    the predicted rate $-1/d$. 
    Here $\qW(\xi)$ is the Wasserstein
    quantization value with respect to the Euclidean metric.}
  \label{fig:rates_and_bounds}
\end{figure}

In every case,
the numerical approximation of the quantization value $\qGW(\XX)$ 
lies within the numerical approximations of the interval $[c\,\qW(\xi),C\,\qW(\xi)]$.
Note that for the scalar product with $d=1$,
the values lie essentially at the upper bound.
This reflects \cref{thm:closed_form_1d},
which provides 
the closed-form expression
$\qGW(\XX) = \qW(\xi)\sqrt{2M_\xi - \qW(\xi)^2}$.
As $d=1$, $M_\xi$ reduces to a scalar
and moreover
 $M_\xi = \lambda_{\max}(M_\xi)$.
Since $\qW(\xi)\to 0$ as $n\to\infty$, 
the ratio $\qGW(\XX)/\qW(\xi)$
converges to $\sqrt{2\lambda_{\max}(M_\xi)} = C$, 
so the upper bound is
asymptotically attained. 

We use a large sample size 
$N = 5\,000 \gg n$
so that $\qGW(\XX)$ serves as a proxy 
for the quantization value of the continuous
uniform measure on $[0,1]^d$. 
Accordingly, the log--log plots display an approximately linear
decay, and the fitted slopes match the predicted rate $-1/d$ 
to within $\approx 0.02$
across all gauges and dimensions.
This reflects the asymptotic rate $\qGW(\XX) \asymp n^{-1/d}$
established in \cref{cor:GW_quant_rate_1,cor:GW_quant_rate_2}.

\subsection{Pruning a Neural Network}\label{subsec:pruningnn}
Pruning neural networks is a way of reducing the size (and thus the computational burden) of a large neural network while trying to retain a high performance. In this example, we show how a feedforward neural network can be encoded as a gm-space and how the respective quantization leads to a smaller network which performs better than baseline pruning procedures. To this end, we work with a simple three-hidden-layer neural network of the form
\[
f(x) = W_4 \circ \varphi \circ W_3 \circ \varphi \circ W_2 \circ \varphi \circ W_1(x),
\]
where $x \in \mathbb{R}^d$ is the input to the neural network, $W_1, W_2, W_3, W_4$ are linear maps (i.e., matrices), and $\varphi$ is an activation function applied element-wise. 

Most pruning methods work layer by layer. A simple pruning baseline is to delete neurons based on certain characteristics, usually related to the magnitudes of the attached weights (see, e.g.~\cite{ding2025neural,hoefler2021sparsity, li2017pruning}) or activation vectors (see, e.g., \cite{sun2024simple}). Beyond just deletion of neurons, pruning can be based on clustering each layer's neurons
and suitably combining the weights of neurons which share a cluster (see, e.g., \cite{akash2022wasserstein,morelli2026partial, singh2020model}). While pruning methods are sometimes combined with retraining, in this example we focus purely on data-free methods, and hence neither retraining nor activation vectors are used.

In the following, we show how to encode and prune a neural network as a whole using the presented GW machinery. 
To this end, we define the gm-space $\XX =([N], g, \xi)$, where $N$ is the total number of neurons (including input and output neurons) in the network and $\xi$ is the uniform distribution. 
In the simple experiment (where the MNIST dataset is used which consists of images of handwritten digits), we have $N = 784+3\cdot h + 10$, where $784$ is the input dimension ($28\times 28$ pixel images), $h \in \{50, 100\}$ is the hidden dimension for our experiments, and $10$ is the output dimension, corresponding to 10 digits.
The gauge $g$ is built via the weight matrices of the network as follows: For $l, \tilde{l} \in \{0, 1, 2, 3, 4\}$ with $l<\tilde{l}$, set $W^{(l, \tilde{l})} \coloneqq W_{\tilde{l}}  W_{\tilde{l}-1} \dots W_{l+1}$ and $W^{(\tilde{l}, l)} \coloneqq (W^{(l, \tilde{l})})^{\top}$. Then, if $i$ is the $a$-th neuron in layer $l$ and $j$ is the $b$-th neuron in layer $\tilde{l}$, where $l < \tilde{l}$, we set $g(i, j) := \lambda^{(|l-\tilde{l}|-1)/2} W^{(l, \tilde{l})}_{b, a}$, where $\lambda \in (0, 1)$ is a factor which we introduce to increase the relative importance of short-range interactions in the network. In the following, we use $\lambda=0.3$, but we observed similar results for $\lambda \in [0.1,0.5]$.
If $l=\tilde{l}$ and $i\neq j$, we set $g(i, j)=0$. Regarding the diagonal $g(i, i)$, we make the following observations: First, in a clustering sense, each input and each output neuron is unique, and hence we set their gauge value $g(i, i)$ each to a unique value separated sufficiently from all other diagonal values. This ensures that no input or output neuron should ever be clustered together with any other neuron. Similarly, so that no neurons are matched across different hidden layers, each hidden layer has a unique value for their corresponding neurons on the diagonal, each sufficiently separated from all other values. Effectively, what these diagonal values as a whole are doing is to restrict the forms of the support which the measures $\pi_i$ in \cref{alg:1} can take. Finally, we symmetrize the gauge by setting $g(j, i) := g(i, j)$.

To be precise, for neurons $i\in [N]$, we set $l(i)\in \{0, 1, 2, 3, 4\}$ as its layer, and we set $p(i) \in \{1, 2, 3, \ldots\}$ as the neuron's position within the layer. 
For $i, j \in [N]$, the gauge is given by
\[
g(i, j) \coloneqq \begin{cases}
    v_i & i=j \text{ and } l(i) \in\{0, 4\} \\
    y_{l(i)} & i=j \text{ and } l(i) \in \{1, 2, 3\}\\
    0 & l(i) = l(j) \text{ and } i \neq j \\
    \lambda^{(|l(i)-l(j)|-1)/2} \, W^{(l(i), l(j))}_{p(j), p(i)} & l(i) \neq l(j)
\end{cases}
\]
where $v_i, y_{l(i)} \in \mathbb{R}$ are just sufficiently different values on the diagonal.

\begin{figure}[t]
    \centering
    \includegraphics[width=1\linewidth]{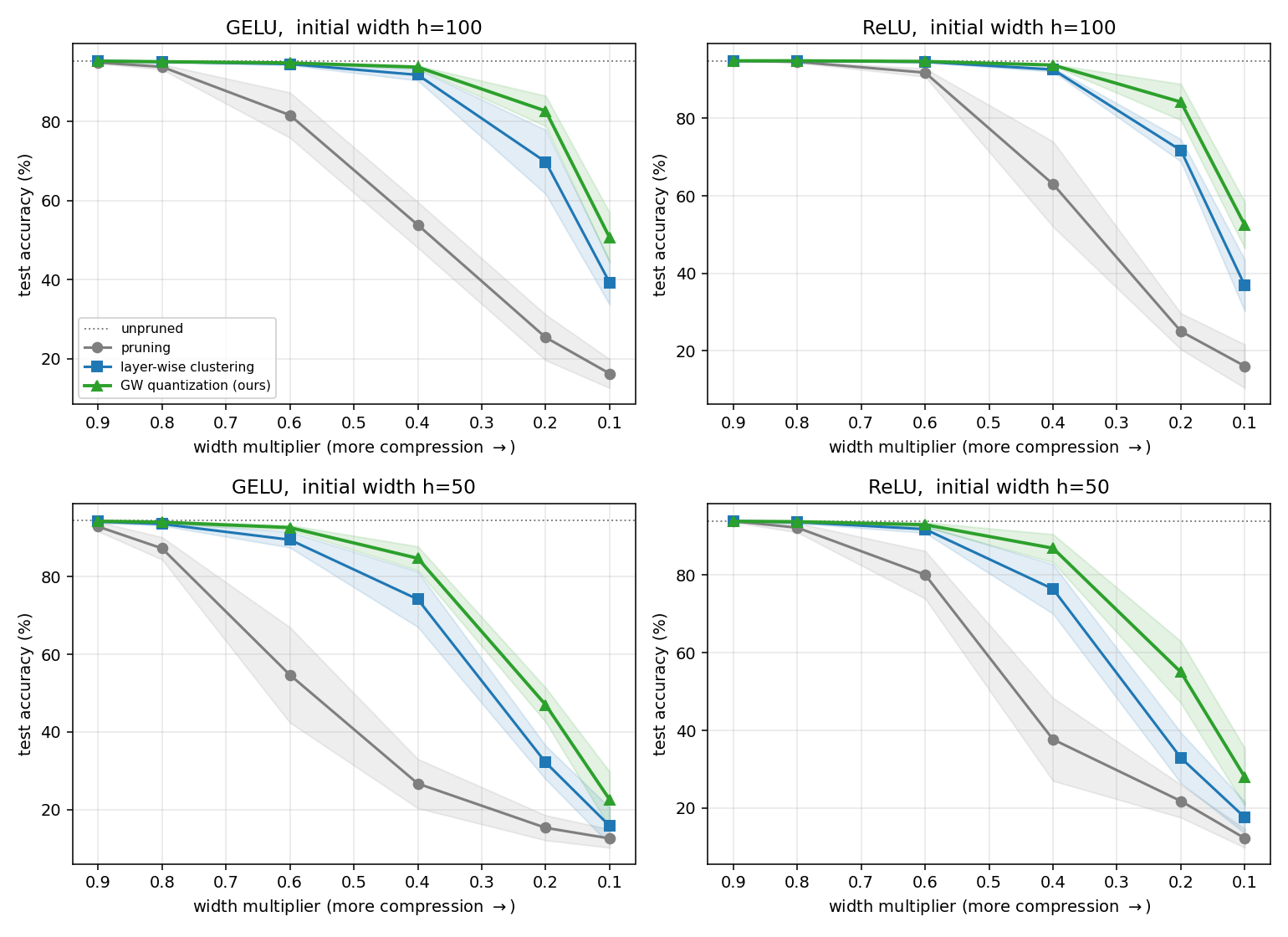}
    \caption{Comparison of different ways to reduce the size of the neural network (three-hidden-layer feedforward neural network trained on MNIST) by reducing each hidden dimension to a width multiplier times the initial width. The baselines are pruning (gray lines, arising from deleting those neurons per layer whose incoming weights have lowest $L^1$ norm) and layer-wise clustering of neurons (blue lines, cf.~\cite{morelli2026partial}). The method introduced in this paper is shown as the green lines, where the network as a whole is modeled as a gm-space and quantized with \cref{alg:1}. The confidence bands are over 10 independent random seeds.}
    \label{fig:pruningnetwork}
\end{figure}

We run \cref{alg:1} to quantize $\XX$. To this end, we note that we merely reduce the size of the hidden layers, so we prune the model size to $n = 784 + 3 \cdot \lfloor\alpha \cdot h\rfloor + 10$, where $\alpha \in [0, 1]$ is a multiplier for the width of hidden layers. To initialize \cref{alg:1}, we use the layer-by-layer hierarchical clustering of neurons as given by \cite{morelli2026partial}.
We then run \cref{alg:1} with step size 1, but note that the results are practically the same when using a line search instead. 
To ensure a fair comparison with the baseline methods, we want to avoid empty clusters in \cref{alg:1}, for which we use standard cluster splitting methods, cf.~\cite[Section 8.2.2]{TSK2006}.
We denote the output of \cref{alg:1} by $([n], G, \upsilon)$. Due to the aforementioned modeling choice of the gauge (effectively restricting $\pi_i$), we can still assign each neuron $i \in [n]$ uniquely to a layer. To extract the pruned neural network from this output, we first fit a gauge of neural-network-form (as $g$, but for a smaller network) to $G$ in $L^2(\upsilon \otimes \upsilon)$, which we denote by $\tilde{G}$.\footnote{We perform this $L^2$-minimization via the Adam optimizer, which appears to converge quickly and robustly.} Finally, for two neurons $i, j \in [n]$ in layers $l$ and $l+1$, the corresponding entry of the weight matrix $\tilde{W}_{l+1}$ of the pruned network is given by $\frac{\upsilon_i}{\xi(\{1\})}\tilde{G}(i, j)$.\footnote{The inclusion of the term $\frac{\upsilon_i}{\xi(\{1\})}$ is important to retain the scale of the weights in each layer. It encodes how many initial neurons are summarized by the new node $i$, which is not included in the gauge value (which is simply an average across initial gauge values). Of course, we could replace $\xi(\{1\})$ by $1/N$.}

The results for networks with varying activation functions and sizes are shown in \cref{fig:pruningnetwork}, which shows results for networks trained on MNIST. We see that the introduced method using GW quantization performs best throughout all four regimes (different sizes and activation functions), particularly leading to better performance at more aggressive compression.
While this example is certainly only a first step in this direction (e.g., no retraining is used and the modeling is agnostic to the activation function), it showcases the potential of the modeling freedom provided by GW to encode neural networks and the relevance of quantization in such settings.

\appendix
\crefalias{section}{appendix}
\crefalias{subsection}{appendix}

\section{Auxiliary Lemmas}

We provide two auxiliary lemmas which are known, 
but stated here for clear reference in our setting.

\begin{lemma}\label{lem:linear_assignment}
    Let $\xi \in \p(X)$ 
    and $(c_i)_{i=1}^n$
    with non-negative 
    $c_1, \dotsc, c_n \in L^1(\xi)$. 
    Then
    \[
    \min_{
    \substack{
    \pi_1,\dotsc,\pi_n \in \M_+(X)
    \\
    \sum_{i=1}^n \pi_i = \xi
    }
    }
    \sum_{i=1}^n \int_{X} c_i \dx \pi_i(x)
    =
    \int_X \min_{j=1,\dotsc,n} c_j \dx \xi.
    \]
    A collection of measures 
    $(\pi_i)_{i=1}^n \subset \M_+(X)$ with $\sum_{i=1}^n \pi_i = \xi$
    attains the minimum 
    if and only if
    $\pi_i$ is concentrated on
    \[
    A_i \coloneqq \{x \in X : c_i(x) = \min_{j=1,\dotsc,n} c_j(x)\},
    \quad 
    \text{for all } i = 1,\dotsc,n.
    \]
    In particular,
    if $(V_i)_{i=1}^n$ 
    is a Voronoi partition of $X$
    according to $(c_i)_{i=1}^n$,
    $(\xi \vert_{V_i})_{i=1}^n$ is a solution
    and it is unique if and only if 
    $\xi(A_i \cap A_j) = 0$,
    $i \neq j$.
\end{lemma}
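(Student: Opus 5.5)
The plan is to establish the lower bound pointwise, then identify the minimizers, and finally handle the Voronoi statement. For any feasible $(\pi_i)_{i=1}^n$ with $\sum_i \pi_i = \xi$, each $\pi_i$ is non-negative and $c_i \geq \min_j c_j$ holds everywhere. Hence
\[
\sum_{i=1}^n \int_X c_i \dx\pi_i \;\geq\; \sum_{i=1}^n \int_X \min_j c_j \dx\pi_i \;=\; \int_X \min_j c_j \dx\xi .
\]
All integrals are finite because $0 \le c_i$, $c_i\in L^1(\xi)$ and $\pi_i \le \xi$. Equality is attained by the Voronoi assignment $\pi_i = \xi\vert_{V_i}$. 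The map $T$ in \cref{eq:voronoi_partition} is measurable, since each set $\{c_i = \min_j c_j\}$ is measurable. By construction $V_i \subset A_i$, so $c_i = \min_j c_j$ on $V_i$. Therefore $\sum_i \int_{V_i} c_i \dx\xi = \int_X \min_j c_j \dx\xi$, and the minimum is attained with the stated value.

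For the characterization of minimizers, write the gap as
\[
\sum_{i=1}^n \int_X \Bigl(c_i - \min_j c_j\Bigr)\dx\pi_i .
\]
This is a sum of integrals of non-negative functions against non-negative measures. It vanishes if and only if each integrand is zero $\pi_i$-a.e., that is, if and only if $\pi_i(X\setminus A_i)=0$ for every $i$. This gives both directions of the claimed equivalence. The only care needed is that $c_i - \min_j c_j$ is integrable, which holds because it is dominated by $c_i \in L^1(\xi)$.

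The uniqueness statement is the only step requiring more than bookkeeping, and I expect it to be the main obstacle.

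Suppose first that $\xi(A_i\cap A_j)=0$ for all $i\neq j$, and let $(\pi_i)$ be any minimizer. Then $\pi_i$ is concentrated on $A_i$, and $\pi_i \le \xi$. The sets $A_i$ cover $X$. Modulo $\xi$-null sets, namely the pairwise overlaps, the family $(A_i)$ is a partition, and it agrees with $(V_i)$ up to null sets; indeed $V_i \subset A_i$ and $A_i\setminus V_i \subset \bigcup_{j\ne i} A_j\cap A_i$. For measurable $B$ this gives $\pi_i(B) = \pi_i(B\cap A_i)$. Because $\sum_j \pi_j = \xi$ and $\pi_j(A_i)=\pi_j(A_i\cap A_j)\le \xi(A_i\cap A_j)=0$ for $j\neq i$, we get $\pi_i(B\cap A_i) = \xi(B\cap A_i) = \xi(B\cap V_i)$. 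Hence $\pi_i = \xi\vert_{V_i}$.

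Conversely, suppose $\xi(A_i\cap A_j)>0$ for some $i\neq j$, and let $D = A_i\cap A_j$. The set $D$ contains no point of $V_k$ with $k$ larger than both $i$ and $j$. By the tie-breaking rule, $D\cap V_i$ and $D\cap V_j$ do not both carry all the mass, so one can move mass within $D$. Concretely, take the Voronoi solution and replace the portion $\xi\vert_{D\cap V_k}$ (for whichever index $k\in\{i,j\}$ it was assigned to) by splitting it in half between $\pi_i$ and $\pi_j$. Both indices are admissible on $D$, so the new family is still concentrated on the respective $A$'s and is a minimizer. It differs from $(\xi\vert_{V_i})$ because $\xi(D)>0$. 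This gives non-uniqueness and completes the equivalence.
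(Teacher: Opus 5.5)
Your lower bound, your characterization of minimizers via the non-negative gap $\sum_i \int_X (c_i - \min_j c_j)\,\mathrm{d}\pi_i$, and your forward uniqueness direction are correct and follow the paper's proof. For the forward direction you compute $\pi_i(B\cap A_i)=\xi(B\cap A_i)=\xi(B\cap V_i)$ directly, while the paper first shows $\pi_i\vert_{V_i}=\xi\vert_{V_i}$ and then that $\pi_i$ is concentrated on $V_i$; the two are equivalent. The gap is in the converse uniqueness direction (non-uniqueness when $\xi(A_i\cap A_j)>0$).

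You fix the given pair $i\neq j$ with $\xi(D)>0$, $D=A_i\cap A_j$, and split $\xi\vert_{D\cap V_k}$ ``for whichever index $k\in\{i,j\}$ it was assigned to''. This assumes the Voronoi rule assigns the mass of $D$ to $i$ or $j$, which is false in general. For $x\in D$ the tie-breaking only gives $T(x)\le\min\{i,j\}$. So $D\cap V_k=\emptyset$ for $k>\min\{i,j\}$, but $T(x)<\min\{i,j\}$ whenever a third, smaller index also attains the minimum at $x$. For example, take $n=3$, $c_1=c_2=c_3$ and the pair $(i,j)=(2,3)$. Then $D=X$, $V_1=X$ and $V_2=V_3=\emptyset$, so $\xi(D\cap V_2)=\xi(D\cap V_3)=0$. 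Your modification moves no mass, and the conclusion ``it differs because $\xi(D)>0$'' fails.

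The fix is what the paper does. Since $(V_k)_{k=1}^n$ partitions $X$, pick an index $k$, possibly outside $\{i,j\}$, with $\xi(D\cap V_k)>0$, and choose $m\in\{i,j\}\setminus\{k\}$. Move half of $\xi\vert_{D\cap V_k}$ from the $k$-th measure to the $m$-th. This is admissible because $D\cap V_k\subset V_k\subset A_k$ and $D\subset A_m$, and it changes the family because $\xi(D\cap V_k)>0$. Alternatively, split all of $\xi\vert_D$ evenly between $i$ and $j$, removing it from every cell. The result differs from $(\xi\vert_{V_k})_{k=1}^n$ because $D\cap V_{\max\{i,j\}}=\emptyset$, so the index $\max\{i,j\}$ gains mass $\tfrac12\xi(D)>0$ on $D$.
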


\begin{proof}
    We abbreviate $c \coloneqq \min_{j=1,\dotsc,n} c_j$.
    Since $c_i \in L^1(\xi)$ 
    and $0 \leq c \leq c_i$,
    we also have $c \in L^1(\xi)$.
    Now let $(\pi_i)_{i=1}^n \subset \M_+(X)$ 
    with
    $\sum_{i=1}^n \pi_i = \xi$ be arbitrary. 
    Then
    \begin{equation}\label{eq:linear_lb}
    \sum_{i=1}^n \int_X c_i \,\mathrm{d}\pi_i
    \geq \sum_{i=1}^n \int_X c \,\mathrm{d}\pi_i
    = \int_X c \,\mathrm{d}\xi.
    \end{equation}
    It holds $c_i - c \geq 0$,
    and $X \setminus A_i = \{x : c_i(x) - c(x) > 0\}$.
    Thus, equality in \cref{eq:linear_lb} 
    holds if and only if
    $\pi_i(X \setminus A_i) = 0$,
    i.e.\ $\pi_i$ is concentrated on $A_i$,
    for every $i = 1,\dotsc,n$.
    By construction,
    the Voronoi partition
    $(V_i)_{i=1}^n$ is a disjoint
    partition of $X$ with $V_i \subset A_i$, 
    so in particular $(\xi|_{V_i})_{i=1}^n$ 
    is feasible and attains
    the lower bound in \cref{eq:linear_lb}.
    
    We turn to the uniqueness claim.
    First, 
    assume $\xi(A_i \cap A_j) = 0$ for all $i \neq j$
    and let $(\pi_i)_{i=1}^n$ be any solution.
    It holds
    \begin{equation}\label{eq:proof_xi_B_V_i}
    \xi(B \cap V_i) 
    = \sum_{j=1}^n \pi_j(B \cap V_i) 
    = \pi_i(B \cap V_i) + \sum_{j \neq i} \pi_j(B \cap V_i)
    , \quad B \subset X \text{ measurable}.
    \end{equation}
    For $j \neq i$, we have
    $V_i \cap A_j \subset A_i \cap A_j$, 
    so that 
    \[
    \pi_j(V_i) = \pi_j(V_i \cap A_j) \leq \pi_j(A_i \cap A_j) \leq \xi(A_i \cap A_j)  = 0.
    \]
    Hence, together with \cref{eq:proof_xi_B_V_i}, 
    we obtain $\pi_i\vert_{V_i} = \xi\vert_{V_i}$.
    In the following, 
    we establish that $\pi_i$ is concentrated on $V_i$.
    If $x \in A_i \setminus V_i$,
    then $x \in V_j \subset A_j$
    for some $j \neq i$ 
    and thus $x \in A_i \cap A_j$.
    Hence, 
    \[
    A_i \setminus V_i \subset \bigcup_{j \neq i} (A_i \cap A_j),
    \]
    which is a $\xi$-null set.
    As $\pi_i \leq \xi$
    and $\pi_i$
    is concentrated on $A_i = V_i \cup (A_i \setminus V_i)$,
    it follows that $\pi_i$ is
    concentrated on $V_i$
    and $\pi_i = \xi\vert_{V_i}$,
    for all $i=1,\dotsc,n$.

    Conversely, 
    assume $\xi(A_k \cap A_l) > 0$
    for some fixed $k \neq l$. 
    Since $(V_i)_{i=1}^n$ is a
    partition of $X$, 
    there exists 
    $i \in \{1,\dotsc,n\}$ 
    such that 
    $M \coloneqq A_k \cap A_l \cap V_i$
    satisfies $\xi(M) > 0$. 
    Choose $j \in \{k, l\} \setminus \{i\}$ 
    and set
    \[
    \pi'_i \coloneqq \xi|_{V_i} - \tfrac{1}{2}\,\xi|_{M}, \qquad
    \pi'_j \coloneqq \xi|_{V_j} + \tfrac{1}{2}\,\xi|_{M}, \qquad
    \pi'_m \coloneqq \xi|_{V_m}, \quad m \in \{1,\dotsc,n\} \setminus \{i, j\}.
    \]
    Since $M \subset V_i$,
    we obtain
    $(\pi'_m)_{m=1}^n \subset \mathcal{M}_+(X)$ 
    with $\sum_{m=1}^n \pi'_m = \xi$.
    Each $\pi'_m$ is concentrated on $A_m$, 
    since $V_m \subset A_m$ 
    and $M \subset A_j$ 
    as well as $M \subset V_i \subset A_i$.
    By the first part, $(\pi'_m)_{m=1}^n$ is a solution, 
    and it differs from
    $(\xi|_{V_m})_{m=1}^n$ as $\xi(M) > 0$, as desired.
\end{proof}

The following is a well-known statement regarding the arithmetic mean as the $L^2$ barycenter.

\begin{lemma}\label{lem:L2_bary}
    Let $A$ be a Polish space, 
    $\alpha \in \M_+(A)$ 
    with $\alpha(A) > 0$
    and $f \in L^2(A, \alpha; \R^d)$. 
    Then, 
    for its associated $L^2$-mean 
    $m \coloneqq \frac{1}{\alpha(A)} \int_A f \dx\alpha \in \R^d$, 
    we have
    \begin{equation}\label{eq:L2_bary_min}
        \lVert f - m \rVert_{L^2(\alpha)} 
        = \inf_{c \in \R^d} \lVert f - c \rVert_{L^2(\alpha)},
    \end{equation}
    and $m$ is the unique minimizer.
    Moreover, it holds
    \[
    \lVert f - a_0 \rVert_{L^2(\alpha)}^2 
    = \lVert f - m \rVert_{L^2(\alpha)}^2 
    + \lVert m - a_0 \rVert^2\, \alpha(A), 
    \qquad \text{for any } a_0 \in \R^d.
    \]
\end{lemma}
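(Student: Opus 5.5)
The plan is to expand the square around $m$ and show that the cross term vanishes; both claims of \cref{lem:L2_bary} then follow at once. First I would check that everything is finite. Since $\alpha$ is a finite measure, $L^2(\alpha)\subset L^1(\alpha)$ by Cauchy--Schwarz, because $\int_A \|f\|\dx\alpha \le \alpha(A)^{1/2}\|f\|_{L^2(\alpha)}$. Hence $m$ is well-defined, and $f-c\in L^2(A,\alpha;\R^d)$ for every constant $c\in\R^d$.

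Next, fix an arbitrary $a_0\in\R^d$ and write $f-a_0=(f-m)+(m-a_0)$. Expanding the squared norm pointwise and integrating gives
\[
\lVert f-a_0\rVert_{L^2(\alpha)}^2=\lVert f-m\rVert_{L^2(\alpha)}^2+2\Bigl\langle \int_A (f-m)\dx\alpha,\ m-a_0\Bigr\rangle+\lVert m-a_0\rVert^2\alpha(A).
\]
Pulling the constant vector $m-a_0$ out of the integral is justified by the integrability shown above. By the definition of $m$, we have $\int_A (f-m)\dx\alpha=\int_A f\dx\alpha-m\,\alpha(A)=0$. So the cross term drops out, and this proves the claimed identity.

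Finally, I would read off the minimization statement from the identity. Since $\alpha(A)>0$, we get $\lVert f-c\rVert_{L^2(\alpha)}^2\ge\lVert f-m\rVert_{L^2(\alpha)}^2$ for every $c$, with equality if and only if $\|m-c\|^2\alpha(A)=0$, i.e.\ if and only if $c=m$. This shows \cref{eq:L2_bary_min} and that $m$ is the unique minimizer. There is no genuine obstacle here; the only point requiring care is the integrability check that makes $m$ finite and lets the constant vector be moved outside the integral.
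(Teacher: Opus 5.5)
Your proposal is correct and follows essentially the same route as the paper: expand $\lVert f-a_0\rVert^2$ around $m$, make the cross term vanish via $\int_A (f-m)\dx\alpha = 0$, and read off the minimization and uniqueness from $\alpha(A)>0$. Your explicit integrability check ($L^2(\alpha)\subset L^1(\alpha)$ for finite $\alpha$) is a small detail that the paper leaves implicit.
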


\begin{proof}
    For any $a_0 \in \R^d$, 
    expanding the square gives
    \begin{align*}
    \lVert f - a_0 \rVert_{L^2(A, \alpha)}^2
    &= \int_A \lVert f(a) - a_0 \rVert^2 \dx\alpha(a)
    \\
    &= \int_A 
    \Bigl( 
    \lVert f(a) - m \rVert^2 
    + 2 \langle f(a) - m,\, m - a_0 \rangle 
    + \lVert m - a_0 \rVert^2 
    \Bigr) 
    \dx\alpha(a)
    \\
    &= \lVert f - m \rVert_{L^2(A, \alpha)}^2 
    + 2 \Bigl\langle \int_A (f(a) - m) \dx\alpha(a),\, m - a_0 \Bigr\rangle 
    + \lVert m - a_0 \rVert^2\, \alpha(A).
    \end{align*}
    By definition of $m$, 
    it holds 
    $\int_A (f(a) - m) \dx\alpha(a) = \int_A f \dx\alpha - m\, \alpha(A) = 0$, 
    so the middle term vanishes 
    and the asserted identity follows. 
    It holds 
    $\lVert m - a_0 \rVert^2\, \alpha(A) \ge 0$ 
    and equality 
    if and only if $a_0 = m$, 
    this also yields \cref{eq:L2_bary_min} 
    and the uniqueness of $m$.
\end{proof}

\section{Proofs for \texorpdfstring{\cref{sec:gw}}{Section 3}}

\subsection{Fully Supported Representatives in \texorpdfstring{$\GM_n$}{GMₙ}}\label{subapp:proof_of_3_1}

\begin{proof}[Proof of \cref{prop:rep-with-larger-cardinality}]
    Let $\YY = ([n],G,\upsilon) \in \GM_n$ and set
    $Y_1 = \{i \in [n] : \upsilon_i > 0\}$.
    We assume $m \coloneqq \lvert Y_1 \rvert < n$, 
    otherwise we are done.
    Firstly, $\YY_1 \coloneqq (Y_1, G \vert_{Y_1 \times Y_1}, \upsilon \vert_{Y_1})$
    is homomorphic to $\YY$ by construction.
    Moreover, 
    by renaming $Y_1$ according to any bijection 
    $T_1:Y_1 \to [m]$,
    $\YY_1$ 
    is further homomorphic to some 
    $\YY_2 = ([m],G^{(2)},\upsilon^{(2)})$
    with $\upsilon^{(2)} \in \p([m])$ fully supported.
    It suffices to find a to-$\YY_2$ 
    homomorphic gm-space
    of the form
    $\tilde{\YY} = ([m+1],\tilde{G},\tilde{\upsilon})$,
    with fully supported $\tilde{\upsilon} \in \p([m+1])$.
    From there the result follows by induction.
    For this let
    $b = (G_{1,m}^{(2)},\dotsc,G_{m,m}^{(2)})^\top \in \mathbb{R}^m$ 
    and set
    \[
    \tilde{G} = 
\begin{pmatrix}
    G^{(2)} & b 
    \\
    b^\top & G_{m,m}^{(2)}
\end{pmatrix}
    \in \R^{(m+1) \times (m+1)}
    \text{ and }
    \tilde{\upsilon} = \biggl(
    \upsilon_1^{(2)},\dotsc,
    \upsilon_{m-1}^{(2)}, 
    \frac{\upsilon_{m}^{(2)}}{2},
    \frac{\upsilon_{m}^{(2)}}{2}
    \biggr)^\top.
    \]
    By construction, the map 
    $T:[m+1] \to [m]$ 
    given by $T\vert_{[m]} = \id_{[m]}$, and $T(m+1) = m$,
    defines a homomorphism from $\tilde{\YY}$
    to $\YY_2$.
    Hence,
    $\GW_2(\YY_2,\tilde{\YY}) = 0$,
    so that $\YY$ 
    is homomorphic 
    to $\tilde{\YY}$.
    Repeating the construction inductively
    for $m+2,\dotsc,n$,
    we see that $\YY$ is homomorphic to
    a space defined on $[n]$ with fully supported measure.
\end{proof}

\subsection{Optimality of the Block-Mean Gauge}\label{subapp:proof_of_3_2}

\begin{proof}[Proof of \cref{thm:pointwise_lb_gauge}]
    Consider a fixed $\pi \in \p(X \times [n])$
    with $(P_X)_\# \pi = \xi$.
    Clearly,
    $\hat{G}$ is symmetric.
    Furthermore, 
    it holds
    \begin{align}\label{proof:pointwise_lb_1}
        \inf_{G \in \R^{n \times n}}
        Q_{\GW}(G,(\pi_i)_{i=1}^n)
        &=
        \inf_{G \in \R^{n \times n}}
        \sum_{i,i'=1}^n
        \iint_{X^2}
        \lvert g(x,x') - G_{i,i'} \rvert^2
        \dx \pi_i(x) \dx \pi_{i'}(x')
        \nonumber
        \\
        &=
        \sum_{i,i'=1}^n
        \inf_{G_{i,i'} \in \R}
        \|g - G_{i,i'}\|_{L^2(\pi_i \otimes \pi_{i'})}^2
        \nonumber
        \\
        &=
        \sum_{i,i'=1}^n
        \|g - \hat{G}_{i,i'}\|_{L^2(\pi_i \otimes \pi_{i'})}^2,
    \end{align}
    where the last equality follows by 
    \cref{lem:L2_bary}, 
    since $\hat{G}_{i,i'}$ is exactly the $L^2$ mean
    of $g$ with respect to 
    $\pi_i \otimes \pi_{i'}$, 
    $i,i' \in [n]$.
    Since the unconstrained minimizer is symmetric,
    it also solves the problem when constrained to symmetric matrices.
    Consider the case 
    $\pi \in \Pio(\XX,\YY)$
    for the given $\YY = ([n],G,\upsilon)$.
    Then
    \begin{align*}
        \GW_2^2(\XX,\YY)
        &= 
        Q_{\GW}(G,(\pi_i)_{i=1}^n)
        =
        \sum_{i,i'=1}^n
        \|g - G_{i,i'}\|_{L^2(\pi_i \otimes \pi_{i'})}^2
        \\
        &=
        \sum_{i,i'=1}^n
        \biggl(
        \|g - \hat{G}_{i,i'}\|_{L^2(\pi_i \otimes \pi_{i'})}^2
        + 
        \underbrace{\pi_i(X) \pi_{i'}(X)}_{= \upsilon_i \upsilon_{i'}} (G_{i,i'} - \hat{G}_{i,i'})^2
        \biggr)
        \nonumber
        \\
        &=
        \iint_{(X \times [n])^2}
        \lvert g(x,x') - \hat{G}_{i,i'} \rvert^2
        \dx \pi(x,i) \dx \pi(x',i')
        + \|G - \hat{G}\|_{L^2(\upsilon \otimes \upsilon)}^2
        \nonumber
        \\
        &\geq
        \GW_2^2(\XX,\hat{\YY})
        + \|G - \hat{G}\|_{L^2(\upsilon \otimes \upsilon)}^2
    \end{align*}
    where
    the equality in the second line 
    follows again from
    \cref{lem:L2_bary}
    and the final estimate is due to 
    $\pi \in \Pi(\xi,\upsilon)$.
\end{proof}

\subsection{Optimal Couplings Concentrate on Voronoi Cells}\label{subapp:proof_of_3_3}

First,
we show that
$Q_{\GW}(G,\cdot)$ 
attains its
minimum over 
$\Pi(\xi, *_n)$,
for which we require an
intermediary result
which we present in a slightly generalized form
that is required later.

\begin{lemma}\label{lem:product_convergence}
    Let $X$ be a Polish space,
    $\xi \in \p(X)$ and consider two sequences
    $(\pi^{(k)})_{k \in \N}$, $(\sigma^{(k)})_{k \in \N} \subset \p(X \times [n])$
    with
    $(P_X)_\# \pi^{(k)} = (P_X)_\# \sigma^{(k)} = \xi$, $k \in \N$,
    such that $\pi^{(k)} \weakly \pi$ and $\sigma^{(k)} \weakly \sigma$.
    Then,
    for every $h \in L^1(\xi \otimes \xi)$ and all $i, i' \in [n]$,
    \[
    \iint_{X^2} h(x,x') \dx\pi_i^{(k)}(x) \dx\sigma_{i'}^{(k)}(x')
    \xrightarrow{k \to \infty}
    \iint_{X^2} h(x,x') \dx\pi_i(x) \dx\sigma_{i'}(x').
    \]
\end{lemma}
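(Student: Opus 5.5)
The plan is to reduce the statement to weak convergence of the slice product measures $\pi_i^{(k)}\otimes\sigma_{i'}^{(k)}$ to $\pi_i\otimes\sigma_{i'}$. The tool is the domination $\pi_i^{(k)}\otimes\sigma_{i'}^{(k)}\le\xi\otimes\xi$, which lets us pass from bounded continuous test functions to general $h\in L^1(\xi\otimes\xi)$ by density.

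First, fix $i,i'$. Since $\{i\}$ is clopen in $[n]$, the indicator $x\mapsto f(x)\1_{\{i\}}(j)$ is bounded continuous on $X\times[n]$ for any $f\in C_b(X)$. Hence $\pi^{(k)}\weakly\pi$ implies $\pi_i^{(k)}\weakly\pi_i$ in $\M_+(X)$, and likewise $\sigma_{i'}^{(k)}\weakly\sigma_{i'}$. Masses converge as well, by testing with $f\equiv 1$. On a Polish space, weak convergence of finite measures implies weak convergence of the products, so $\pi_i^{(k)}\otimes\sigma_{i'}^{(k)}\weakly\pi_i\otimes\sigma_{i'}$ on $X^2$. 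To prove this I would normalize to probability measures (or treat the zero-mass limit separately, where it is trivial) and cite the standard product theorem, e.g.\ Billingsley. This gives the claim for all $h\in C_b(X^2)$.

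Next, I record the domination. Since $\sum_i\pi_i^{(k)}=\xi$, we have $\pi_i^{(k)}\le\xi$, and similarly for $\sigma^{(k)}$ and for the limits, because the $X$-marginal is preserved under weak limits. Therefore all the product measures involved are bounded above by $\xi\otimes\xi$.

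For general $h\in L^1(\xi\otimes\xi)$, fix $\varepsilon>0$ and choose $h_\varepsilon\in C_b(X^2)$ with $\|h-h_\varepsilon\|_{L^1(\xi\otimes\xi)}<\varepsilon$. This is possible because $C_b$ is dense in $L^1$ of a finite Borel measure on a metric space. Using the domination,
\[
\Bigl|\iint h\,\dx(\pi_i^{(k)}\otimes\sigma_{i'}^{(k)})-\iint h\,\dx(\pi_i\otimes\sigma_{i'})\Bigr|
\le 2\varepsilon+\Bigl|\iint h_\varepsilon\,\dx(\pi_i^{(k)}\otimes\sigma_{i'}^{(k)})-\iint h_\varepsilon\,\dx(\pi_i\otimes\sigma_{i'})\Bigr|.
\]
The last term tends to zero by the previous step. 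Letting $k\to\infty$ and then $\varepsilon\to 0$ concludes the proof.

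The main obstacle is the weak convergence of products of sub-probability measures. I would handle it via the density of finite sums of tensor products $f_1\otimes f_2$ with $f_1,f_2\in C_b(X)$ in the relevant sense, combined with tightness: a weakly convergent sequence on a Polish space is tight by Prokhorov. On compact sets, Stone--Weierstrass approximates any $h\in C_b(X^2)$ uniformly by such sums, and for each tensor term the integrals factorize, so convergence follows from the convergence of the slices. The density of $C_b$ in $L^1$ is routine by regularity of Borel measures on metric spaces.
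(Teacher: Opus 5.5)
Your proposal is correct and follows essentially the same approach as the paper: you get convergence for bounded continuous test functions from weak convergence of product measures, then use the domination $\pi_i^{(k)}\otimes\sigma_{i'}^{(k)}\le\xi\otimes\xi$ and the density of $C_b(X\times X)$ in $L^1(\xi\otimes\xi)$, with the same $2\varepsilon$ estimate. The only difference is that the paper applies product convergence directly to the probability measures $\pi^{(k)}\otimes\sigma^{(k)}$ on $(X\times[n])^2$ and tests with $\tilde h(x,x')\1_{\{i\}}(j)\1_{\{i'\}}(j')$, which avoids the normalization and the zero-mass case you need for the sub-probability slices.
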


\begin{proof}
    Since $\pi_i^{(k)}(A) = \pi^{(k)}(A \times \{i\}) \leq \xi(A)$ and, likewise,
    $\sigma_{i'}^{(k)}(A) \leq \xi(A)$ for all measurable $A \subset X$,
    it holds $\pi_i^{(k)} \otimes \sigma_{i'}^{(k)} \leq \xi \otimes \xi$.
    Thus
    \begin{equation}\label{eq:proof_uniform_bound}
    \Big\lvert \iint_{X^2} f \dx \pi_i^{(k)} \dx \sigma_{i'}^{(k)} \Big\rvert
    \leq \lVert f \rVert_{L^1(\xi \otimes \xi)}
    \quad \text{for all } f \in L^1(\xi \otimes \xi),\ k \in \N.
    \end{equation}
    As the marginal projection is weakly continuous,
    the limits satisfy $(P_X)_\# \pi = (P_X)_\# \sigma = \xi$,
    so $\pi_i \otimes \sigma_{i'} \leq \xi \otimes \xi$
    and
    \cref{eq:proof_uniform_bound} also holds with
    $\pi_i, \sigma_{i'}$ in place of
    $\pi_i^{(k)}, \sigma_{i'}^{(k)}$.
    
    First,
    let $\tilde{h} \in C_b(X \times X)$, the function
    $((x,j),(x',j')) \mapsto \tilde{h}(x,x') \1_{\{i\}}(j) \1_{\{i'\}}(j')$
    is bounded and continuous on $(X \times [n])^2$.
    Hence, 
    due to
    $\pi^{(k)} \otimes \sigma^{(k)} \weakly \pi \otimes \sigma$
    see \cite[Prop.~2.7.8]{bogachev_weak},
    we obtain
    \begin{equation}\label{eq:proof_limit_h_bounded}
    \iint_{X^2} \tilde{h} \dx\pi_i^{(k)} \dx\sigma_{i'}^{(k)}
    \xrightarrow{k \to \infty}
    \iint_{X^2} \tilde{h} \dx\pi_i \dx\sigma_{i'}.
    \end{equation}

    Now, 
    let $h \in L^1(\xi \otimes \xi)$ and $\varepsilon > 0$. 
    As
    $C_b(X \times X)$ is dense in $L^1(\xi \otimes \xi)$, fix
    $\tilde{h} \in C_b(X \times X)$ with
    $\lVert h - \tilde{h} \rVert_{L^1(\xi \otimes \xi)} < \varepsilon$.
    Applying
    \cref{eq:proof_uniform_bound} 
    to $f = h - \tilde{h}$, for both $\pi^{(k)}$ and $\pi$,
    \begin{align*}
    \Big\lvert \iint_{X^2} h \dx\pi_i^{(k)} \dx\sigma_{i'}^{(k)}
    - \iint_{X^2} h \dx\pi_i \dx\sigma_{i'} \Big\rvert
    &\leq \Big\lvert \iint_{X^2} (h - \tilde{h}) \dx\pi_i^{(k)} \dx\sigma_{i'}^{(k)} \Big\rvert
    + \Big\lvert \iint_{X^2} (\tilde{h} - h) \dx\pi_i \dx\sigma_{i'} \Big\rvert
    \\
    &\quad
    + \Big\lvert \iint_{X^2} \tilde{h} \dx\pi_i^{(k)} \dx\sigma_{i'}^{(k)}
    - \iint_{X^2} \tilde{h} \dx\pi_i \dx\sigma_{i'} \Big\rvert
    \\
    &\leq 2\varepsilon
    + \Big\lvert \iint_{X^2} \tilde{h} \dx\pi_i^{(k)} \dx\sigma_{i'}^{(k)}
    - \iint_{X^2} \tilde{h} \dx\pi_i \dx\sigma_{i'} \Big\rvert .
    \end{align*}
    Taking $\limsup_{k \to \infty}$ 
    and using \cref{eq:proof_limit_h_bounded}
    yields
    \[
    \limsup_{k \to \infty}
    \Big\lvert \iint_{X^2} h \dx\pi_i^{(k)} \dx\sigma_{i'}^{(k)}
    - \iint_{X^2} h \dx\pi_i \dx\sigma_{i'} \Big\rvert
    \leq 2\varepsilon .
    \]
    Since $\varepsilon > 0$ was arbitrary, 
    the left-hand side is zero,
    as desired.
\end{proof}

\begin{proposition}\label{prop:Q_GW_min_exists}
Let $\XX=(X,g,\xi)\in\GM$ and $G\in\G_{\sym}^{(n)}$. 
Then $Q_{\GW}(G,\cdot)$ attains its
minimum over $\Pi(\xi, *_n)$.
\end{proposition}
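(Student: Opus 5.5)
We use the direct method of the calculus of variations on $\Pi(\xi,*_n)$ equipped with the weak topology. First, $\Pi(\xi,*_n)$ is nonempty: it contains, for example, $\xi\otimes\delta_1$. It is also weakly compact. Tightness follows because every $\pi\in\Pi(\xi,*_n)$ has first marginal $\xi$, which is tight on the Polish space $X$, and second marginal supported on the finite set $[n]$; hence for a compact $K\subset X$ with $\xi(X\setminus K)<\varepsilon$ we get $\pi((X\times[n])\setminus(K\times[n]))<\varepsilon$. Closedness follows because the constraint $(P_X)_\#\pi=\xi$ is preserved under weak limits, since $P_X$ is continuous. Prokhorov's theorem then shows that every sequence in $\Pi(\xi,*_n)$ has a weakly convergent subsequence with limit in $\Pi(\xi,*_n)$.

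Next, take a minimizing sequence $(\pi^{(k)})_{k\in\N}\subset\Pi(\xi,*_n)$ for $Q_{\GW}(G,\cdot)$. The infimum is finite, since $Q_{\GW}(G,\pi)\le \sum_{i,i'}\|g-G_{i,i'}\|^2_{L^2(\xi\otimes\xi)}<\infty$, using $g\in L^2(\xi\otimes\xi)$ and $\pi_i\le\xi$. Pass to a subsequence with $\pi^{(k)}\weakly\pi$ for some $\pi\in\Pi(\xi,*_n)$. For each pair $i,i'\in[n]$, define $h_{i,i'}(x,x')\coloneqq (g(x,x')-G_{i,i'})^2$. This function lies in $L^1(\xi\otimes\xi)$, because $g\in L^2(\xi\otimes\xi)$ and $\xi\otimes\xi$ is a probability measure.

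Now apply \cref{lem:product_convergence} with $\sigma^{(k)}=\pi^{(k)}$ and $h=h_{i,i'}$. It gives convergence of each of the $n^2$ terms
\[
\iint_{X^2} h_{i,i'}\dx\pi^{(k)}_i\dx\pi^{(k)}_{i'}\to\iint_{X^2} h_{i,i'}\dx\pi_i\dx\pi_{i'}.
\]
Summing over $i,i'$ yields $Q_{\GW}(G,\pi^{(k)})\to Q_{\GW}(G,\pi)$, so we have full continuity along the sequence, not just lower semicontinuity. Therefore $Q_{\GW}(G,\pi)$ equals the infimum, and $\pi$ is a minimizer.

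The only potential obstacle is that $g$ is merely $L^2$ rather than continuous, so weak convergence alone cannot be used to pass to the limit in the integrals. This is exactly what \cref{lem:product_convergence} resolves, via the domination $\pi_i\otimes\pi_{i'}\le\xi\otimes\xi$ together with the density of $C_b$ in $L^1(\xi\otimes\xi)$. What remains is to check the compactness of $\Pi(\xi,*_n)$, which is routine.
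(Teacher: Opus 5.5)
Your proposal is correct and follows the paper's proof. Both establish weak compactness of $\Pi(\xi,*_n)$, obtain weak continuity of $Q_{\GW}(G,\cdot)$ by applying \cref{lem:product_convergence} to $(g-G_{i,i'})^2\in L^1(\xi\otimes\xi)$, and conclude by the direct method. The only difference is that you check compactness by hand via tightness and Prokhorov's theorem, whereas the paper cites \cite[Cor.~5.21]{villani2008optimal} and then invokes the Weierstrass theorem.
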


\begin{proof}
The set $\Pi(\xi, *_n)$ is weakly compact \cite[Cor.~5.21]{villani2008optimal}. 
Writing
\[
Q_{\GW}(G,(\pi_i)_{i=1}^n)=\sum_{i,i'=1}^n\iint_{X^2}(g(x,x')-G_{i,i'})^2\dx\pi_i(x)\dx\pi_{i'}(x')
\]
with 
$(g-G_{i,i'})^2\in L^1(\xi\otimes\xi)$, 
\cref{lem:product_convergence} shows that
$Q_{\GW}(G,\cdot)$ 
is weakly continuous on $\Pi(\xi, *_n)$. 
The claim follows by the
Weierstrass theorem.
\end{proof}

Now that we established existence
of minimizers,
we want to characterize them.
Unfortunately,
the objective
$(\pi_i)_{i=1}^n \mapsto Q_{\GW}(G,(\pi_i)_{i=1}^n)$
is quadratic
and its minimization
does not admit closed form
as in the linear Wasserstein setting.
Following the GW literature \cite{PCS2016,SejViaPey21,vayer2020fused},
a common approach to handle such problems,
is via their bilinear relaxation.
More precisely,
we define
\[
\tilde{Q}_{\GW}
(G,(\pi_i)_{i=1}^n,(\tilde{\pi}_{i})_{i=1}^n)
\coloneqq 
\sum_{i,i'=1}^n \iint_{X^2} (g(x,x') - G_{i,i'})^2 \dx \pi_i(x) \dx \tilde{\pi}_{i'}(x'),
\]
which is linear in
$(\pi_{i})_{i=1}^n$
and 
$(\tilde{\pi}_{i})_{i=1}^n$,
respectively.
For some fixed $\tilde{\pi} \in \p(X \times [n])$
with $(P_X)_\# \tilde{\pi} = \xi$,
we define
\[
c_i^{(\tilde{\pi})}(x) \coloneqq \sum_{i'=1}^n \int_{X} (g(x,x') - G_{i,i'})^2 \dx \tilde{\pi}_{i'}(x'),
\quad 
i \in [n].
\]
Since 
$\tilde{\pi}_i \leq \xi$ 
and $g \in L^2(\xi \otimes \xi)$, 
it holds $c^{(\tilde{\pi})}_i \in L^1(\xi)$.
Using this new notation,
we have
\begin{align}\label{eq:Q_tilde_rewrite}
\tilde{Q}_{\GW}(G,(\pi_i)_{i=1}^n,(\tilde{\pi}_i)_{i=1}^n)
=
\sum_{i=1}^n \int_X c_i^{(\tilde{\pi})}(x) \dx \pi_i(x).
\end{align}

In our setting, 
the rewriting \cref{eq:Q_tilde_rewrite} 
casts the linear minimization
as a linear assignment problem, 
whose solutions the following proposition
characterizes via Voronoi partitions.

\begin{proposition}\label{prop:tilde_Q_GW_biconvex_minimization}
    Consider a fixed gauge matrix $G \in \G_{\sym}^{(n)}$
    and $(\tilde{\pi}_i)_{i=1}^n \subset \M_+(X)$
    with $\sum_{i=1}^n \tilde{\pi}_i = \xi$.
    Then, $(\pi_i)_{i=1}^n$ 
    minimizes 
    \[
    \inf_{
    \substack{
    (\pi_i)_{i=1}^n \subset \M_+(X)
    \\
    \sum_{i=1}^n \pi_i = \xi
    }
    }
    \tilde{Q}_{\GW}(G,(\pi_i)_{i=1}^n,(\tilde{\pi}_i)_{i=1}^n)
    \]
    if and only if $\pi_i$ is concentrated on
    \[
    A_i \coloneqq \{x \in X : c_i^{(\tilde{\pi})}(x) = \min_{j} c_j^{(\tilde{\pi})}(x)\}, \quad \text{for every }i \in [n].
    \]
    In particular, 
    $(\xi \vert_{V_i})_{i=1}^n$ 
    is a solution for a Voronoi partition 
    $(V_i)_{i=1}^n$ of $X$ 
    according to $(c_i^{(\tilde\pi)})_{i=1}^n$.
    It is the unique solution if 
    and only if
    $\xi(A_i \cap A_j) = 0$ for all $i \neq j$.
\end{proposition}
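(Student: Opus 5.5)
The plan is to reduce the statement to \cref{lem:linear_assignment} via the rewriting \cref{eq:Q_tilde_rewrite}. With $G$ and $(\tilde{\pi}_i)_{i=1}^n$ fixed, \cref{eq:Q_tilde_rewrite} shows that
\[
(\pi_i)_{i=1}^n \mapsto \tilde{Q}_{\GW}(G,(\pi_i)_{i=1}^n,(\tilde{\pi}_i)_{i=1}^n) = \sum_{i=1}^n \int_X c_i^{(\tilde{\pi})} \dx \pi_i
\]
is a linear assignment objective with costs $c_i = c_i^{(\tilde{\pi})}$. The minimization over $(\pi_i)_{i=1}^n \subset \M_+(X)$ with $\sum_i \pi_i = \xi$ is then exactly the problem treated in \cref{lem:linear_assignment}.

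First I verify the hypotheses of \cref{lem:linear_assignment}. Non-negativity of $c_i^{(\tilde{\pi})}$ is immediate, since the integrand $(g(x,x')-G_{i,i'})^2$ is non-negative and the $\tilde{\pi}_{i'}$ are non-negative measures. Integrability $c_i^{(\tilde{\pi})} \in L^1(\xi)$ was already noted before \cref{eq:Q_tilde_rewrite}. To justify it, use $\tilde{\pi}_{i'} \leq \xi$ and Tonelli to get
\[
\int_X c_i^{(\tilde{\pi})} \dx\xi \leq \sum_{i'} \|g - G_{i,i'}\|^2_{L^2(\xi\otimes\xi)} < \infty,
\]
which is finite because $g \in L^2(\xi\otimes\xi)$ and constants are in $L^2$ of a probability measure. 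Measurability of $c_i^{(\tilde{\pi})}$ follows from Tonelli as well. One caveat: $c_i^{(\tilde{\pi})}$ is defined pointwise but may be $+\infty$ on a $\xi$-null set. I fix a finite-valued version, or note that \cref{lem:linear_assignment} only uses the $c_i$ through $\xi$-a.e.\ relations. Either way the sets $A_i$ change only by a $\xi$-null set, and since every admissible $\pi_i \leq \xi$, concentration statements are unaffected.

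With the hypotheses in place, \cref{lem:linear_assignment} gives all three claims directly. It yields the characterization of minimizers as those $(\pi_i)$ with each $\pi_i$ concentrated on $A_i$. It shows that $(\xi|_{V_i})_{i=1}^n$ is optimal for a Voronoi partition with respect to $(c_i^{(\tilde{\pi})})$. Finally, it gives uniqueness if and only if $\xi(A_i\cap A_j)=0$ for $i\neq j$.

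The only step requiring care is the integrability and measurability check above. Once that is in place, the result is a direct specialization of \cref{lem:linear_assignment}, so I expect no substantive obstacle.
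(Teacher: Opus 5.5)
Your proposal is correct and follows the paper's proof, which simply combines the rewriting \cref{eq:Q_tilde_rewrite} of $\tilde{Q}_{\GW}$ as a linear assignment objective with \cref{lem:linear_assignment}. Your explicit check of the lemma's hypotheses (non-negativity, $c_i^{(\tilde\pi)} \in L^1(\xi)$ via $\tilde\pi_{i'} \leq \xi$, and the remark that $c_i^{(\tilde\pi)}$ may be infinite only on a $\xi$-null set, which leaves all concentration statements unchanged) is a harmless and slightly more careful version of what the paper takes for granted.
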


\begin{proof}
    This directly follows from \cref{eq:Q_tilde_rewrite}
    and \cref{lem:linear_assignment}.
\end{proof}

Going via the bilinear relaxation allows us to
weakly characterize the solutions 
of the original quadratic problem.
The idea is to show that $\hat{\pi}$,
as a minimizer of the quadratic problem,
also minimizes the linear function
$\tilde{Q}_{\GW}(G,\cdot,(\hat{\pi}_i)_{i=1}^n)$
from where the result follows by 
\cref{prop:tilde_Q_GW_biconvex_minimization}.

\begin{proof}[Proof of \cref{thm:Q_GW_measure_min_characterization}]
    Note that existence is covered by
    \cref{prop:Q_GW_min_exists}.
    For brevity, we use the notation
    $B(\pi,\tilde{\pi}) \coloneqq \tilde{Q}_{\GW}(G,(\pi_i)_{i=1}^n, (\tilde{\pi}_i)_{i=1}^n)$.
    Let $(V_i)_{i=1}^n$ be the partition from 
    \cref{prop:tilde_Q_GW_biconvex_minimization}
    and let 
    $\gamma \in \p(X \times [n])$ 
    be defined by 
    $\gamma(\cdot \times \{i\}) = \xi \vert_{V_i}$.
    According to the proposition,
    $\gamma$ is a solution to 
    \begin{equation}\label{eq:proof_sub_min_problem}
    \argmin_{
    \substack{
    (\pi_i)_{i=1}^n \subset \M_+(X) \\ 
    \sum_{i=1}^n \pi_i = \xi
    }
    }
    B(\pi,\hat{\pi}).
    \end{equation}
    Hence,
    $
    \delta \coloneqq 
    B(\gamma,\hat{\pi}) - B(\hat{\pi},\hat{\pi})
    \leq 0
    $.
    We show $\delta = 0$.
    For $t \in (0,1)$, we set $\pi^{(t)} =  \hat{\pi} + t (\gamma - \hat{\pi})$.
    Clearly, 
    $(P_X)_\# \pi^{(t)} = \xi$ for all $t \in (0,1)$.
    Due to the bilinearity and symmetry of $B$,
    we obtain
    \[
    B(\pi^{(t)},\pi^{(t)})
    = B(\hat{\pi},\hat{\pi}) 
    + 2t 
    \underbrace{
    B(\hat{\pi},\gamma - \hat{\pi})
    }_{= B(\hat{\pi},\gamma) - B(\hat{\pi},\hat{\pi}) = \delta}
    + t^2 B(\gamma - \hat{\pi},\gamma - \hat{\pi})
    \]
    Minimality of $\hat{\pi}$ 
    ensures
    $B(\pi^{(t)},\pi^{(t)}) \geq B(\hat{\pi},\hat{\pi})$,
    so that
    \[
    0 \leq
    2\delta + t B(\gamma - \hat{\pi},\gamma - \hat{\pi}).
    \]
    As $\delta \leq 0$,
    taking the limit $t \to 0$
    yields $0 = \delta = B(\gamma,\hat{\pi}) - B(\hat{\pi},\hat{\pi})$.
    Hence, $\hat{\pi}$ is a minimizer of \cref{eq:proof_sub_min_problem},
    so that \cref{prop:tilde_Q_GW_biconvex_minimization} yields that each
    $\hat{\pi}_i$ is concentrated on $A_i$.
    If additionally $\xi(A_i \cap A_j) = 0$ for all $i \neq j$,
    the uniqueness part of \cref{prop:tilde_Q_GW_biconvex_minimization}
    gives $\hat{\pi}_i = \xi\vert_{V_i}$, $i \in [n]$, as desired.
\end{proof}

\subsection{Existence and Structure of GW Quantizers}\label{subapp:proof_of_3_4}

\begin{proof}[Proof of \cref{thm:GW_quantization_existence_characterization}]
    Let $\XX = (X, g, \xi) \in \GM$ be arbitrary. 
    We first show existence. 
    Let
    $(\YY^{(k)})_{k \in \N} \subset \GM_n$ be a minimizing sequence of
    \cref{eq:GW_quant} with
    $\YY^{(k)} = ([n], G^{(k)}, \upsilon^{(k)})$, $k \in \N$,
    whose measures are
    fully supported, 
    see \cref{prop:rep-with-larger-cardinality},
    and let
    $\pi^{(k)} \in \Pio(\XX, \YY^{(k)})$, $k \in \N$. Define
    $\hat{\YY}^{(k)} \coloneqq ([n], \hat{G}^{(k)}, \upsilon^{(k)}) \in \GM_n$
    with
    \[
    \hat{G}^{(k)}_{i,i'}
    \coloneqq \frac{1}{\upsilon^{(k)}_i \upsilon^{(k)}_{i'}}
    \iint_{X^2} g(x,x') \dx\pi_i^{(k)}(x) \dx\pi_{i'}^{(k)}(x'),
    \quad i, i' \in [n].
    \]
    By \cref{thm:pointwise_lb_gauge}, it holds
    \[
    \GW_2^2(\XX, \YY^{(k)})
    = Q_{\GW}(G^{(k)}, (\pi_i^{(k)})_{i=1}^n)
    \geq Q_{\GW}(\hat{G}^{(k)}, (\pi_i^{(k)})_{i=1}^n)
    \geq \GW_2^2(\XX,\hat{\YY}^{(k)})
    \geq \qGW(\XX)^2,
    \]
    so that
    \[
    \qGW(\XX)^2
    = \lim_{k \to \infty} 
    Q_{\GW}(\hat{G}^{(k)}, (\pi_i^{(k)})_{i=1}^n) 
    \]
    and $(\hat{\YY}^{(k)})_{k \in \N}$ 
    is also a minimizing sequence of \cref{eq:GW_quant}.
    Since
    $(\pi^{(k)})_{k \in \N}$ 
    is a sequence in the weakly compact set $\Pi(\xi, *_n)$,
    we can select a converging subsequence,
    again denoted by $(\pi^{(k)})_k$.
    Let $\pi \in \p(X \times [n])$ be its limit
    and set $\upsilon \coloneqq (P_{[n]})_\# \pi$
    which is the weak limit of 
    the corresponding subsequence of marginals
    $(\upsilon^{(k)})_{k \in \N}$.
    We set $I \coloneqq \{i \in [n] : \upsilon_i > 0\}$ and note that
    $\pi_i(X) = \upsilon_i = 0$, i.e., $\pi_i = 0$, for $i \notin I$.
    For $i, i' \in I$, \cref{lem:product_convergence} applied with $h = g$ yields
    \[
    \hat{G}^{(k)}_{i,i'}
    \xrightarrow{k \to \infty}
    \hat{G}_{i,i'}
    \coloneqq \frac{1}{\upsilon_i \upsilon_{i'}}
    \iint_{X^2} g(x,x') \dx\pi_i(x) \dx\pi_{i'}(x').
    \]
    In the context of 
    \cref{lem:L2_bary} 
    with $d=1$, $f=g$ and $\alpha = \pi_i^{(k)} \otimes \pi_{i'}^{(k)}$ 
    we get $m = \hat{G}^{(k)}_{i,i'}$,
    so applying the lemma with
    $a_0 = 0$ 
    yields
    \begin{align*}
    \iint_{X^2} \big(g - \hat{G}^{(k)}_{i,i'}\big)^2
    \dx\pi_i^{(k)} \dx\pi_{i'}^{(k)}
    = 
    &\iint_{X^2} g^2 \dx\pi_i^{(k)} \dx\pi_{i'}^{(k)}
    - \big(\hat{G}^{(k)}_{i,i'}\big)^2 \upsilon^{(k)}_i \upsilon^{(k)}_{i'}
    \\
    \xrightarrow{k \to \infty} 
    &\iint_{X^2} g^2 \dx\pi_i \dx\pi_{i'}
    - \big(\hat{G}_{i,i'}\big)^2 \upsilon_i \upsilon_{i'},
    \qquad i,i' \in I,
    \end{align*}
    where the convergence of the first term
    holds again by \cref{lem:product_convergence}.
    Thus, 
    we arrive at
    \begin{align*}
    \qGW(\XX)^2
    &= \lim_{k \to \infty} Q_{\GW}(\hat{G}^{(k)}, (\pi_i^{(k)})_{i=1}^n)
    \geq \sum_{i, i' \in I} \lim_{k \to \infty}
    \iint_{X^2} \big(g - \hat{G}^{(k)}_{i,i'}\big)^2
    \dx\pi_i^{(k)} \dx\pi_{i'}^{(k)} \\
    &= \sum_{i, i' \in I}
    \left(\iint_{X^2} g^2 \dx\pi_i \dx\pi_{i'}
    - \big(\hat{G}_{i,i'}\big)^2 \upsilon_i \upsilon_{i'}\right)
    = \sum_{i, i' \in I}
    \iint_{X^2} \big(g - \hat{G}_{i,i'}\big)^2 \dx\pi_i \dx\pi_{i'},
    \end{align*}
    where the last equality follows as before, from 
    \cref{lem:L2_bary}.
    Now set $\hat{\YY} \coloneqq ([n], \hat{G}, \upsilon) \in \GM_n$, 
    where $\hat{G}$ is
    extended by zero outside of $I \times I$. As $\pi_i = 0$ for $i \notin I$, the
    right-hand side above equals the cost of the coupling
    $\pi \in \Pi(\xi, \upsilon)$ between $\XX$ and $\hat{\YY}$, so that
    \[
    \qGW(\XX)^2
    \geq \iint_{(X \times [n])^2}
    \big(g(x,x') - \hat{G}_{i,i'}\big)^2 \dx\pi(x,i) \dx\pi(x',i')
    \geq \GW_2^2(\XX, \hat{\YY})
    \geq \qGW(\XX)^2,
    \]
    where the last inequality holds since
    $\hat{\YY}\in \GM_n$. Hence, all inequalities are
    equalities and $\hat{\YY}$ is a solution of
    \cref{eq:GW_quant}.

    We turn to the characterization.
    Let $\hat{\YY} = ([n], \hat{G}, \hat{\upsilon})$ 
    be an arbitrary solution of
    \cref{eq:GW_quant},
    with fully supported $\hat{\upsilon} \in \p([n])$.
    Let $\XX = (X, g, \xi) \in \GM$
    and let
    $\hat{\pi} \in \Pio(\XX, \hat{\YY})$. Note that
    $\hat{\pi}_i(X) = \hat{\upsilon}_i > 0$ for all $i \in [n]$.

    We show \cref{item:thm35_gauge}. Define $\tilde{G} \in \G^{(n)}_{\sym}$ by
    \[
    \tilde{G}_{i,i'}
    \coloneqq \frac{1}{\hat{\upsilon}_i \hat{\upsilon}_{i'}}
    \iint_{X^2} g(x,x') \dx\hat{\pi}_i(x) \dx\hat{\pi}_{i'}(x'),
    \quad i, i' \in [n],
    \]
    and set 
    $\tilde{\YY} \coloneqq ([n], \tilde{G}, \hat{\upsilon}) \in \GM_n$. 
    Since
    $\hat{\pi} \in \Pio(\XX, \hat{\YY})$, 
    \cref{thm:pointwise_lb_gauge} yields
    \[
    \GW_2^2(\XX, \hat{\YY})
    \geq \GW_2^2(\XX, \tilde{\YY})
    + \lVert \hat{G} - \tilde{G} \rVert^2_{L^2(\hat{\upsilon} \otimes \hat{\upsilon})}.
    \]
    As $\hat{\YY}$ solves
    \cref{eq:GW_quant}, 
    it holds
    $\GW_2^2(\XX, \tilde{\YY}) 
    \geq \qGW(\XX)^2
    = \GW_2^2(\XX, \hat{\YY})$, so that
    $\lVert \hat{G} - \tilde{G}
    \rVert^2_{L^2(\hat{\upsilon} \otimes \hat{\upsilon})} = 0$.
    Since $\hat{\upsilon}$ is fully supported, 
    this gives $\hat{G} = \tilde{G}$,
    as desired.

    We show \cref{item:thm35_concentration}. 
    With the gauge matrix $\hat{G}$ 
    of $\hat{\YY}$ fixed, 
    since
    $\hat{\pi} \in \Pio(\XX, \hat{\YY})$,
    it holds
    \[
    Q_{\GW}(\hat{G}, (\hat{\pi}_i)_{i=1}^n)
    = \GW_2^2(\XX, \hat{\YY})
    = \qGW(\XX)^2.
    \]
    Thus, 
    $(\hat{\pi}_i)_{i=1}^n$ is a solution
    to
    \[
    \inf_{
    \substack{
    (\pi_i)_{i=1}^n \subset \M_+(X) \\
    \sum_{i=1}^n \pi_i = \xi
    }
    }
    Q_{\GW}(\hat{G}, (\pi_i)_{i=1}^n).
    \]
    Hence, \cref{thm:Q_GW_measure_min_characterization} yields that $\hat{\pi}_i$ is
    concentrated on $A_i$ for every $i \in [n]$, where $\hat{c}_i = c_i^{(\hat{\pi})}$
    is taken with respect to the gauge matrix $\hat{G}$, which is
    \cref{item:thm35_concentration}.

    We show \cref{item:thm35_value}. 
    Since $\hat{\pi} \in \Pio(\XX, \hat{\YY})$, it
    holds
    \[
    \GW_2^2(\XX, \hat{\YY})
    = \sum_{i,i'=1}^n \iint_{X^2}
    \big(g - \hat{G}_{i,i'}\big)^2 \dx\hat{\pi}_i \dx\hat{\pi}_{i'}.
    \]
    By \cref{item:thm35_gauge}, $\hat{G}_{i,i'}$ is the mean of $g$ with respect to
    the measure $\hat{\pi}_i \otimes \hat{\pi}_{i'}$, 
    so that \cref{lem:L2_bary}
    applied with $a_0 = 0$ yields
    \[
    \iint_{X^2} \big(g - \hat{G}_{i,i'}\big)^2 \dx\hat{\pi}_i \dx\hat{\pi}_{i'}
    = \iint_{X^2} g^2 \dx\hat{\pi}_i \dx\hat{\pi}_{i'}
    - \hat{G}_{i,i'}^2 \,\hat{\upsilon}_i \hat{\upsilon}_{i'},
    \quad i, i' \in [n].
    \]
    Summing over $i, i' \in [n]$ and using $\sum_{i=1}^n \hat{\pi}_i = \xi$, we
    obtain
    \[
    \GW_2^2(\XX, \hat{\YY})
    = \iint_{X^2} g^2(x,x') \dx\xi(x) \dx\xi(x')
    - \sum_{i,i'=1}^n \hat{G}_{i,i'}^2 \,\hat{\upsilon}_i \hat{\upsilon}_{i'},
    \]
    which is \cref{item:thm35_value}.
    
    Finally, 
    assume additionally that $\xi(A_i \cap A_j) = 0$ 
    for all $i \neq j$.
    As shown in \cref{item:thm35_concentration}, 
    the family $(\hat{\pi}_i)_{i=1}^n$
    is a solution to
    $\inf Q_{\GW}(\hat{G}, (\pi_i)_{i=1}^n)$ 
    over all $(\pi_i)_{i=1}^n \subset
    \M_+(X)$ with $\sum_{i=1}^n \pi_i = \xi$, 
    so that the second part of
    \cref{thm:Q_GW_measure_min_characterization} yields
    $\hat{\pi}_i = \xi\vert_{V_i}$, $i \in [n]$, 
    for the Voronoi partition
    $(V_i)_{i=1}^n$ of $X$ 
    with respect to $(\hat{c}_i)_{i=1}^n$. 
    Since
    $(V_i)_{i=1}^n$ is a disjoint partition of 
    $X$, the map $T \colon X \to [n]$,
    $T(x) = i$ if $x \in V_i$, 
    is well-defined and it holds
    \[
    \hat{\pi}
    = \sum_{i=1}^n \xi\vert_{V_i} \otimes \delta_{i}
    = (\id, T)_\# \xi.
    \]
    In particular,
    $\hat{\upsilon}_i = \hat{\pi}_i(X) = \xi(V_i)$ for all $i \in [n]$, 
    i.e.,
    $\hat{\upsilon} = \sum_{i=1}^n \xi(V_i) \cdot \delta_{i}$.
    Inserting
    $\hat{\pi}_i = \xi\vert_{V_i}$ 
    into \cref{item:thm35_gauge} gives
    \[
    \hat{G}_{i,i'}
    = \frac{1}{\xi(V_i) \xi(V_{i'})}
    \iint_{V_i \times V_{i'}} g(x,x') \dx\xi(x) \dx\xi(x'),
    \quad i, i' \in [n],
    \]
    as desired.
\end{proof}

\section{Proofs for \texorpdfstring{\cref{sec:euclidean}}{Section 4}}\label{app:euclidean}

\subsection{An Upper Bound for Lipschitz Functions of Metrics}\label{subapp:proof_of_4_1}

\begin{proof}[Proof of \cref{prop:GW_quant_upper_bound}]
    Let $\varepsilon > 0$ be arbitrary.
    By \cref{eq:n-center-covering-radius}, 
    choose $Y=\{y_1,\dots,y_n\}\subset X$
    with
    \[
    \qW(\xi)^2 
    \geq  
    \int_X
    d_X^2(x,Y)
    \dx \xi(x) - \varepsilon,
    \]
    and let 
    $h \coloneqq \varphi \circ d_X\vert_{Y \times Y}$.
    In addition, let
    $\upsilon = T_{\#} \xi \in \p(Y)$ 
    be defined by
    \[
    T: X \to Y, x \mapsto y_i, 
    \quad 
    \text{where }
    i = \min\{
    j : d_X(x,y_j) \leq d_X(x,y_k)
    \text{ for all } k \in [n]
    \}.
    \]
    Let $\YY \coloneqq (Y,h,\upsilon)$ and note that $\qGW(\XX) \leq \GW_2(\XX,\YY)$ since $\YY \in \GM_n$.
    We apply the triangle inequality
    to obtain
    \begin{align*}
        &d_X(x,x') - d_X(T(x),T(x'))
        \\
        \leq 
        &\bigl(
        d_X(x,T(x)) 
        + d_X(T(x),T(x'))
        + d_X(T(x'),x')
        \bigr)
        - d_X(T(x),T(x'))
        \\
        = 
        &d_X(x,T(x)) + d_X(x',T(x')),
    \end{align*}
    for any $x,x' \in X$.
    An analogous application also yields
    the same estimate for
    $d_X(T(x),T(x')) - d_X(x,x')$, 
    so that in total
    \begin{equation}\label{eq:proof_triangle}
        \lvert d_X(x,x') - d_X(T(x),T(x')) \rvert
        \leq
        d_X(x,T(x)) + d_X(x',T(x')). 
    \end{equation}
    For the coupling 
    $\pi \coloneqq 
    (\id,T)_\# \xi \in \Pi(\xi,\upsilon)$, 
    we obtain
    \begin{align*}
        \GW_2^2(\XX,\YY)
        &\leq 
        \iint_{(X \times Y)^2} 
        (\varphi(d_X(x,x')) - \varphi(d_X(y,y')))^2
        \dx \pi(x,y) \dx \pi(x',y')
        \\
        &\leq 
        L^2
        \iint_{(X \times Y)^2} 
        (d_X(x,x') - d_X(y,y'))^2
        \dx \pi(x,y) \dx \pi(x',y')
        \\
        &=
        L^2
        \iint_{X^2} 
        (d_X(x,x') - d_X(T(x),T(x')))^2
        \dx \xi(x) \dx \xi(x').
    \end{align*}
    Now applying \cref{eq:proof_triangle}
    together with the elementary inequality
    $(a+b)^2 \leq 2(a^2 + b^2)$
    gives
    \begin{align*}
        \qGW(\XX)^2 \leq \GW_2^2(\XX,\YY)
        &\leq 
        L^2
        \iint_{X^2} 
        (d_X(x,T(x)) + d_X(x',T(x')))^2
        \dx \xi(x) \dx \xi(x')
        \\
        &\leq 
        2L^2
        \iint_{X^2} 
        d_X^2(x,T(x)) + d_X^2(x',T(x'))
        \dx \xi(x) \dx \xi(x')
        \\
        &\leq
        4 L^2 
        \int_{X} 
        d_X^2(x,T(x))
        \dx \xi(x)
        \\
        &\leq  4 L^2 \qW(\xi)^2 + 4L^2 \varepsilon.
    \end{align*}
    Letting $\varepsilon \rightarrow 0$ and taking square roots gives $\qGW(\XX) \leq 2L\,\qW(\xi)$, as desired.
\end{proof}

\subsection{Lower Bounds for the Euclidean and Squared-Euclidean Gauge}\label{subapp:proof_of_4_3}

For the proof of \cref{thm:lb_euclidean_gauges}, 
we require two auxiliary lemmas, 
and the following additional notation.
For a measure
$\mu \in \p(\R^d)$
and $f:\R^d \to \R$,
we write 
\begin{equation}\label{eq:var_f_def}
\mathrm{Var}_\mu(f) \coloneqq \Big\lVert f - \int_{\R^d} f \dx\mu \Big\rVert^2_{L^2(\mu)}
= \frac{1}{2} \iint_{\R^d\times\R^d} \big(f(x)-f(x')\big)^2 \dx\mu(x)\dx\mu(x'),
\end{equation}
where the second equality follows by expanding the squares (both sides equal
$\int f^2\dx\mu - (\int f\dx\mu)^2$).
\begin{lemma}\label{lem:var_collapse}
    Let $\mu, \nu \in \p(\R^d)$ and $f \in L^2(\mu \otimes \nu)$. Then for every $c \in \R$,
    \[
    \lVert f - c \rVert^2_{L^2(\mu \otimes \nu)}
    \;\geq\;
    \int_{\R^d} \mathrm{Var}_\mu\big( f(\cdot, y) \big) \dx\nu(y).
    \]
\end{lemma}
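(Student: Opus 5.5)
The plan is a Fubini decomposition: integrate first over $x$ for each fixed $y$, and then use that, for a fixed function of $x$, the constant minimizing the $L^2(\mu)$ distance is its $\mu$-mean, where the remaining error is exactly the variance. The constant $c$ is thus replaced fiberwise by a $y$-dependent constant, which can only lower the cost.

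\textbf{Fubini step.} Since $f\in L^2(\mu\otimes\nu)$, we have $(f-c)^2\in L^1(\mu\otimes\nu)$. Fubini--Tonelli then gives that for $\nu$-a.e.\ $y$ the section $f(\cdot,y)$ lies in $L^2(\mu)$, and
\[
\lVert f - c\rVert^2_{L^2(\mu\otimes\nu)} = \int_{\R^d} \lVert f(\cdot,y) - c\rVert^2_{L^2(\mu)} \dx\nu(y).
\]

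\textbf{Fiberwise step.} For each such $y$, apply \cref{lem:L2_bary} with $A=\R^d$, $\alpha=\mu$, target dimension $1$, the function $f(\cdot,y)$, and $a_0=c$. It yields
\[
\lVert f(\cdot,y)-c\rVert^2_{L^2(\mu)} = \mathrm{Var}_\mu\big(f(\cdot,y)\big) + \bigl(m(y)-c\bigr)^2 \;\geq\; \mathrm{Var}_\mu\big(f(\cdot,y)\big),
\]
where $m(y)=\int f(x,y)\dx\mu(x)$ and we use $\mu(\R^d)=1$.

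\textbf{Integration step.} Integrating this inequality in $y$ against $\nu$ gives the claim. The only point needing care is measurability of $y\mapsto \mathrm{Var}_\mu(f(\cdot,y))$. This follows from Tonelli, since the variance equals $\int f(x,y)^2\dx\mu(x) - m(y)^2$ and both terms are measurable in $y$. The null set of $y$ where the section fails to be in $L^2(\mu)$ is irrelevant for the integral. There is no real obstacle here; this measurability remark is the only nontrivial detail.
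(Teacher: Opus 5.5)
Your proposal is correct and matches the paper's proof: apply \cref{lem:L2_bary} fiberwise to get $\lVert f(\cdot,y)-c\rVert^2_{L^2(\mu)} \geq \mathrm{Var}_\mu\big(f(\cdot,y)\big)$ for $\nu$-a.e.\ $y$, then integrate against $\nu$. Your Fubini and measurability remarks only make explicit details that the paper leaves implicit.
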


\begin{proof}
    Let $c \in \R$ be arbitrary.
    By \cref{lem:L2_bary},
    $\lVert f(\cdot,y) - c \rVert^2_{L^2(\mu)}
    \geq 
    \mathrm{Var}_\mu\big(f(\cdot,y)\big)
    $ 
    for $\nu$-a.e.\ fixed $y$.
    Integrating over $y$ with respect to $\nu$ yields the claim.
\end{proof}

\begin{lemma}\label{lem:quadratic_var_lb}
    Let $\mu, \nu \in \p(\R^d)$, where $\mu$ has finite fourth and
    $\nu$ finite second moment, with covariance matrices
    $\Sigma_\mu, \Sigma_\nu$. Then
    \[
    \int_{\R^d} \mathrm{Var}_{\mu}\big( \lVert \cdot - y \rVert^2 \big)
    \dx\nu(y)
    \;\geq\; 4 \tr( \Sigma_\mu \Sigma_\nu ).
    \]
\end{lemma}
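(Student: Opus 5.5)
We compute the inner variance explicitly for fixed $y$ and then integrate over $y$. Expand $\lVert x - y\rVert^2 = \lVert x\rVert^2 - 2\langle x, y\rangle + \lVert y\rVert^2$. For fixed $y$, the term $\lVert y\rVert^2$ is constant in $x$, so it does not affect $\mathrm{Var}_\mu$. Writing $f(x) \coloneqq \lVert x\rVert^2$ and $\ell_y(x) \coloneqq \langle x,y\rangle$, bilinearity of the variance gives
\[
\mathrm{Var}_\mu\bigl(\lVert\cdot - y\rVert^2\bigr) = \mathrm{Var}_\mu(f) - 4\,\mathrm{Cov}_\mu(f,\ell_y) + 4\,\mathrm{Var}_\mu(\ell_y).
\]
These quantities are finite because $\mu$ has a finite fourth moment. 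We have $\mathrm{Var}_\mu(\ell_y) = y^\top\Sigma_\mu y$. The covariance term is linear in $y$: $\mathrm{Cov}_\mu(f,\ell_y) = \langle a, y\rangle$ with $a \coloneqq \int (\lVert x\rVert^2 - \int\lVert\cdot\rVert^2\dx\mu)(x - \bar m_\mu)\dx\mu(x) \in \R^d$.

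Rather than integrating and hoping the terms cancel, it is cleaner to center $y$ first. Set $\bar m_\nu \coloneqq \int y\dx\nu(y)$ and write $y = \bar m_\nu + z$. The function $\lVert x - y\rVert^2$ equals $\lVert x - \bar m_\nu\rVert^2 - 2\langle x, z\rangle$ plus terms constant in $x$. Replacing $\mu$ by its translate by $-\bar m_\nu$ leaves covariances unchanged, so without loss of generality $\bar m_\nu = 0$. With $h \coloneqq \lVert\cdot\rVert^2$ this gives
\[
\mathrm{Var}_\mu(h - 2\ell_z) = \mathrm{Var}_\mu(h) - 4\langle a, z\rangle + 4 z^\top \Sigma_\mu z.
\]
Integrate against $\nu$, using that $\int z\dx\nu = 0$ and $\int zz^\top\dx\nu = \Sigma_\nu$. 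The linear term vanishes, and
\[
\int \mathrm{Var}_\mu\bigl(\lVert\cdot - y\rVert^2\bigr)\dx\nu(y) = \mathrm{Var}_\mu(h) + 4\tr(\Sigma_\mu\Sigma_\nu) \ge 4\tr(\Sigma_\mu\Sigma_\nu).
\]
The last step uses $\mathrm{Var}_\mu(h) \ge 0$.

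The only step needing care is justifying the translation and the finiteness and integrability of each term. The translation is legitimate because a shift is a measurable bijection and variances are shift-invariant; in fact one can avoid it by carrying $\bar m_\nu$ along, since it only adds a term constant in $x$. The moment assumptions make every term finite: the fourth moment of $\mu$ controls $\mathrm{Var}_\mu(h)$ and $a$, and the second moment of $\nu$ makes $\int\langle a, z\rangle\dx\nu$ and $\int z^\top\Sigma_\mu z\dx\nu$ well defined. With these checks the exchange of the linear term with the integral is valid, so no real obstacle remains.
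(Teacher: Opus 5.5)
Your proof is correct and follows the paper's argument. The paper also writes $q(y)=\mathrm{Var}_\mu(\lVert\cdot\rVert^2)-4\langle c_\mu,y\rangle+4y^\top\Sigma_\mu y$ and expands this quadratic exactly around $\m(\nu)$, which is your centering $y=\bar m_\nu+z$; both routes reach the identity $\int q\dx\nu=q(\m(\nu))+4\tr(\Sigma_\mu\Sigma_\nu)$ and drop $q(\m(\nu))=\mathrm{Var}_\mu(\lVert\cdot-\m(\nu)\rVert^2)\geq 0$, which is your $\mathrm{Var}_\mu(h)$ after translating.
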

\begin{proof}
        Set $m_2 \coloneqq \int_{\R^d} \lVert x \rVert^2 \dx\mu(x)$,
        $\m(\mu) \coloneqq \int_{\R^d} x \dx\mu(x)$ and
        $c_\mu \coloneqq \int_{\R^d} \big( \lVert x \rVert^2 - m_2 \big) x
        \dx\mu(x)$.
        Expanding $\lVert x - y \rVert^2 = \lVert x \rVert^2
        - 2 \langle x, y \rangle + \lVert y \rVert^2$ and using that
        $\mathrm{Var}_\mu$ is invariant under adding the (in $x$) constant
        $\lVert y \rVert^2$,
        \[
        q(y) \coloneqq
        \mathrm{Var}_{\mu}\big( \lVert \cdot - y \rVert^2 \big)
        = \mathrm{Var}_{\mu}\big( \lVert \cdot \rVert^2
          - 2 \langle \cdot,\, y \rangle \big).
        \]
        The function $x \mapsto \lVert x \rVert^2 - 2 \langle x, y \rangle$ has
        $\mu$-mean $m_2 - 2 \langle \m(\mu), y \rangle$, so by the definition of
        the variance,
        \[
        q(y)
        = \int_{\R^d} \Big( \big( \lVert x \rVert^2 - m_2 \big)
        - 2 \langle x - \m(\mu),\, y \rangle \Big)^2 \dx\mu(x).
        \]
        We expand the square.
        Firstly, due to
        $\int_{\R^d} \big( \lVert x \rVert^2 - m_2 \big) \dx\mu(x) = 0$,
        it holds
        \[
        \int_{\R^d} \big( \lVert x \rVert^2 - m_2 \big)
        \langle x - \m(\mu),\, y \rangle \dx\mu(x) 
        = \langle c_\mu, y \rangle.
        \]
        Secondly, we get
        $\int_{\R^d} \langle x - \m(\mu),\, y \rangle^2 \dx\mu(x)
        = y^\top \Sigma_\mu y$.
        Thus, we obtain
        \[
        q(y)
        = \mathrm{Var}_{\mu}\big( \lVert \cdot \rVert^2 \big)
        - 4 \langle c_\mu, y \rangle
        + 4\, y^\top \Sigma_\mu y.
        \]
    Thus, 
    $q$ is a  polynomial of degree (at most) two in $y$ 
    with Hessian
    $\nabla^2 q = 8 \Sigma_\mu$.
    For polynomials of degree two, the second-order Taylor
    expansion is exact, so that
    expanding $q$ around $\m(\nu)$ and using
    $\int_{\R^d} (y - \m(\nu)) \dx\nu(y) = 0$ yields
    (cf.~\cite[Theorem 1.5]{seber2003linear}),
    \begin{align*}
    \int_{\R^d} q \dx\nu
    &= q(\m(\nu))
    + \frac{1}{2} \int_{\R^d}
    (y - \m(\nu))^\top \big( \nabla^2 q \big) (y - \m(\nu)) \dx\nu(y)
    \\
    &= q(\m(\nu))
    + \frac{1}{2} \int_{\R^d}
    \tr\bigl(\big( \nabla^2 q \big) (y - \m(\nu)) (y - \m(\nu))^\top \bigr) \dx\nu(y)
    \\
    &= q(\m(\nu)) + 4 \tr(\Sigma_\mu \Sigma_\nu).
    \end{align*}
    Since $q(\m(\nu)) = \mathrm{Var}_\mu\big( \lVert \cdot - \m(\nu) \rVert^2 \big)
    \geq 0$ as a pointwise variance, the claim follows.
\end{proof}

We finally turn to the proof of \cref{thm:lb_euclidean_gauges}.

\begin{proof}[Proof of \cref{thm:lb_euclidean_gauges}]
    Let $\YY = ([n],G,\upsilon) \in \GM_n$ be arbitrary with fully supported
    $\upsilon \in \p([n])$, see \cref{prop:rep-with-larger-cardinality}, and let
    $\pi \in \Pio(\XX,\YY)$. We set $\bar\pi_i \coloneqq \pi_i/\upsilon_i \in \p(X)$,
    with mean $m_i \coloneqq \int_X x \dx\bar\pi_i(x)$ and covariance matrix
    $\Sigma_i \coloneqq \int_X (x-m_i)(x-m_i)^\top \dx\bar\pi_i(x)$, $i \in [n]$.
    Note that by closed convexity of $X$, 
    $m_i \in X$.
    Since $\pi_i \leq \xi$, all appearing moments are finite.
    Due to $\pi \in \Pio(\XX,\YY)$,
    it holds
    \[
    \GW_2^2(\XX,\YY) = \sum_{i,i'=1}^n \upsilon_i\upsilon_{i'}
    \big\lVert g - G_{i,i'} \big\rVert^2_{L^2(\bar\pi_i \otimes \bar\pi_{i'})}.
    \]
    Applying \cref{lem:var_collapse} to each summand with 
    $f = g$,
    $\mu = \bar\pi_i$,
    $\nu = \bar\pi_{i'}$ 
    and $c = G_{i,i'}$, 
    we obtain
    \begin{equation}\label{eq:conditional_variance_lb}
    \GW_2^2(\XX,\YY)
    \;\geq\;
    \sum_{i,i'=1}^n 
    \upsilon_i
    \upsilon_{i'}
    \int_X
    \mathrm{Var}_{\bar\pi_i}\big( g(\cdot, y) \big) \dx\bar{\pi}_{i'}(y)
    =
    \sum_{i = 1}^n 
    \upsilon_i
    \int_X
    \mathrm{Var}_{\bar\pi_i}\big( g(\cdot, y) \big) \dx\xi(y),
    \end{equation}
    where, in the last equality,
    we used 
    $\sum_{i'=1}^n \upsilon_{i'}\bar\pi_{i'} = \xi$.

    We show \cref{item:lb_sq}, i.e.\ let $g = \lVert \cdot - \cdot \rVert^2$.
    By \cref{lem:quadratic_var_lb} applied with $\mu = \bar\pi_i$ and
    $\nu = \xi$, and since $\tr(AB) \geq \lambda_{\min}(B) \tr(A)$ for
    positive semi-definite $A, B$,
    \begin{align*}
    \GW_2^2(\XX, \YY)
    &\geq 4 \sum_{i=1}^n \upsilon_i \tr( \Sigma_i \Sigma_\xi )
    \geq 4 \lambda_{\min}(\Sigma_\xi) \sum_{i=1}^n \upsilon_i \tr(\Sigma_i)
    \\
    &= 4 \lambda_{\min}(\Sigma_\xi) \sum_{i=1}^n \int_X
      \lVert x - m_i \rVert^2 \dx\pi_i(x)
    \geq 4 \lambda_{\min}(\Sigma_\xi) \qW(\xi)^2,
    \end{align*}
    where the last step follows from \cref{eq:qW_as_min_QW}. Taking
    square roots and the infimum over $\YY$ yields \cref{item:lb_sq}.
    
    We show \cref{item:lb_dist}, i.e.\ $g = \lVert \cdot - \cdot \rVert$.
    Fix $i \in [n]$ and $y \in X$.
    Since $\lVert x-y\rVert + \lVert x'-y\rVert \le 2\diam(X)$ for $x,x'\in X$,
    it holds
    \begin{align*}
    \big(\lVert x-y\rVert^2 - \lVert x'-y\rVert^2\big)^2
    &= \big(\lVert x-y\rVert - \lVert x'-y\rVert\big)^2\big(\lVert x-y\rVert + \lVert x'-y\rVert\big)^2
    \\
    &\leq 4\diam(X)^2 \big(\lVert x-y\rVert - \lVert x'-y\rVert\big)^2.
    \end{align*}
    Integrating over 
    $(x,x')$ 
    with respect to $\bar{\pi}_i \otimes \bar{\pi}_{i}$ 
    and using the second equality 
    in \cref{eq:var_f_def},
    we get
    $\mathrm{Var}_{\bar{\pi}_i}(\lVert \cdot - y\rVert^2) \le 4\diam(X)^2\,\mathrm{Var}_{\bar{\pi}_i}(\lVert \cdot - y\rVert)$.
    Together with
    \cref{eq:conditional_variance_lb} 
    and the estimate from
    \cref{item:lb_sq},
    we arrive at
    \[
    \GW_2^2(\XX, \YY)
    \geq \frac{1}{4 \diam(X)^2}
    \sum_{i=1}^n \upsilon_i \int_X
    \mathrm{Var}_{\bar\pi_i}\big( \lVert \cdot - y \rVert^2 \big)
    \dx\xi(y)
    \geq \frac{\lambda_{\min}(\Sigma_\xi)}{\diam(X)^2} \, \qW(\xi)^2.
    \]
    Taking square roots and the infimum over $\YY$ yields
    \cref{item:lb_dist}.
\end{proof}

\subsection{Two-Sided Bounds for the Scalar-Product Gauge}\label{subapp:proof_of_4_5}

For the proof of \cref{thm:lower_and_upper_bound_scalar_product}, 
we require 
two auxiliary lemmas.
The first one ensures that
for any $\XX = (X,\langle \cdot, \cdot \rangle, \xi) \in \GM$ with $X \subset \R^d$,
the measure $\xi$ has finite second moment.

\begin{lemma}\label{lem:second_moment}
    Let $X \subset \R^d$ and $\xi \in \p(X)$.
    If $\langle \cdot,\cdot \rangle \in L^2(\xi \otimes \xi)$,
    then $\xi \in \p^{(2)}(X)$.
\end{lemma}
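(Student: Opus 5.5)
Since $\langle x,x'\rangle^2$ is integrable against $\xi\otimes\xi$, the plan is to reduce it to the matrix identity
\[
\iint_{X^2}\langle x,x'\rangle^2\dx\xi(x)\dx\xi(x')=\tr(M_\xi^2)=\|M_\xi\|_F^2,
\]
and then read off finiteness of the second moment $\tr(M_\xi)=\int\|x\|^2\dx\xi$. The subtlety is that $M_\xi$ is not yet known to be finite, so I would not manipulate it directly. Instead I would work with truncations.

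For $R>0$ set $\xi_R\coloneqq\xi\vert_{\{\|x\|\le R\}}$ and $M_R\coloneqq\int x x^\top\dx\xi_R(x)$. This is a finite positive semi-definite matrix, and it is monotonically increasing in $R$ in the Loewner order. Tonelli together with $\langle x,x'\rangle^2=\tr(xx^\top x'x'^\top)$ gives
\[
\tr(M_R^2)=\iint_{\{\|x\|,\|x'\|\le R\}}\langle x,x'\rangle^2\dx\xi\dx\xi\le \|\langle\cdot,\cdot\rangle\|_{L^2(\xi\otimes\xi)}^2\eqqcolon K<\infty.
\]

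Next I would bound the trace by the Frobenius norm. For positive semi-definite $M$, Cauchy--Schwarz on the eigenvalues gives $\tr(M)\le\sqrt{d}\,\|M\|_F=\sqrt{d\,\tr(M^2)}$. Hence
\[
\int_{\{\|x\|\le R\}}\|x\|^2\dx\xi=\tr(M_R)\le\sqrt{dK}
\]
uniformly in $R$. Letting $R\to\infty$ by monotone convergence yields $\int_X\|x\|^2\dx\xi\le\sqrt{dK}<\infty$. Since the moment condition is independent of the base point ($\|x-x_0\|^2\le2\|x\|^2+2\|x_0\|^2$), this is exactly $\xi\in\p^{(2)}(X)$.

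The only delicate step is avoiding circularity, namely not using $M_\xi$ before its finiteness is established. The truncation handles this, and the rest is a direct computation.
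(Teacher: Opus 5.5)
Your proposal is correct and follows essentially the paper's own proof: truncate to $\{\|x\|\le R\}$, identify $\tr(M_R^2)$ with the truncated double integral of $\langle x,x'\rangle^2$, bound $\tr(M_R)\le\sqrt{d\,\tr(M_R^2)}$ by Cauchy--Schwarz on the eigenvalues, and conclude by monotone convergence. Your remark that the second-moment condition does not depend on the base point is a harmless addition that the paper leaves implicit.
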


\begin{proof}
    Set 
    $I \coloneqq  \iint_{\R^d \times \R^d} \langle x, x' \rangle^2 \dx\xi(x) \dx\xi(x') < \infty$.
    For $R > 0$, 
    let $B_R \coloneqq \{ x \in X : \|x\| \leq R \}$ 
    and
    \[
    M_R \coloneqq \int_{B_R} x x^\top \dx\xi(x) \in \R^{d \times d},
    \]
    which is well-defined and symmetric positive semi-definite, with entries bounded by $R^2$.
    Let $\lambda_1,\dotsc,\lambda_d \geq 0$ be the eigenvalues of $M_R$.
    Then the Cauchy-Schwarz inequality yields
    \[
    \tr(M_R)^2
    = 
    \biggl(\sum_{i=1}^d \lambda_i \biggr)^2
    \leq \Bigl( \sum_{i=1}^d \lambda_i^2 \Bigr) \Bigl( \sum_{i=1}^d 1^2 \Bigr)
    = d \sum_{i=1}^d \lambda_i^2
    = d \, \|M_R\|_F^2 .
    \]
    Furthermore, 
    due to $\tr(x x^\top) = \|x\|^2$
    and $\langle a, b \rangle^2 = \tr(a a^\top b b^\top)$,
    we obtain
    \begin{align*}
    \Bigl( \int_{B_R} \|x\|^2 \dx\xi(x) \Bigr)^2
    &= \tr\Bigl( 
    \int_{B_R} xx^\top \dx\xi(x) \Bigr)^2
    = \tr(M_R)^2
    \leq 
    d \, \|M_R\|_F^2
    = d \tr(M_R M_R)
    \\
    &= d \iint_{B_R^2} \tr(x x^\top x' (x')^\top) \dx\xi(x) \dx\xi(x')
    = d \iint_{B_R^2} \langle x, x' \rangle^2 \dx\xi(x) \dx\xi(x')
    \leq d \, I .
    \end{align*}
    Hence $\int_{B_R} \|x\|^2 \dx\xi \leq \sqrt{d \, I}$ 
    for every $R > 0$, 
    and taking the limit $R \to \infty$,
    monotone convergence yields
    \[
    \int_{X} \|x\|^2 \dx\xi(x) \leq \sqrt{d \, I} < \infty,
    \]
    as desired.
\end{proof}

For the remainder of the section,
for $\pi \in \Pi(\xi, *_n)$,
we define
\begin{equation}\label{eq:def_a_i_and_M}
a_i(\pi) \coloneqq \int_X x \dx\pi_i(x) \in \R^d, \quad i \in [n]
\qquad \text{and} \qquad
M(\pi) \coloneqq \sum_{i=1}^n \frac{a_i(\pi) (a_i(\pi))^\top}{\pi_i(X)} \in \R^{d \times d},
\end{equation}
where we set $0/0 = 0$. 
This makes $M(\pi)$ well-defined as 
$\lVert a_i(\pi) \rVert^2 \leq \pi_i(X) \int_X \lVert x \rVert^2 \dx\pi_i(x)$ 
by Jensen's inequality, 
so that $a_i(\pi) = 0$ whenever $\pi_i(X) = 0$.

In the scalar product case, the quantization problem
is equivalent to maximization of the function 
$\pi \mapsto \|M(\pi)\|_F^2$,
which is the result of the following lemma.

\begin{lemma}\label{lem:scalar_reduction}
Let $\XX = (X,\langle \cdot, \cdot \rangle, \xi) \in \GM$
with $X \subset \R^d$.
Let 
$M_\xi\coloneqq\int_X xx^\top\dx\xi$. 
For every $\pi\in\Pi(\xi, *_n)$
\[
\inf_{G\in\G^{(n)}_{\sym}} Q_{\GW}\bigl(G,(\pi_i)_{i=1}^n\bigr)
= \|M_\xi\|_F^2 - \|M(\pi)\|_F^2.
\]
\end{lemma}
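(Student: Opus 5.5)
The plan is to take the closed-form minimizer of the gauge from \cref{thm:pointwise_lb_gauge} and evaluate the resulting variance formula explicitly, using the bilinear structure of the scalar product. Fix $\pi\in\Pi(\xi,*_n)$ and let $I\coloneqq\{i:\pi_i(X)>0\}$. For $i\notin I$ the measure $\pi_i$ vanishes, so every term involving index $i$ is zero for any choice of $G_{i,\cdot}$. We may also take $a_i(\pi)=0$ there, by the Jensen remark after \cref{eq:def_a_i_and_M}. Hence we may restrict to indices in $I$. As in the proof of \cref{thm:pointwise_lb_gauge}, the infimum decouples entrywise. By \cref{lem:L2_bary} with $a_0=0$, exactly as in the proof of \cref{thm:GW_quantization_existence_characterization}\ref{item:thm35_value}, we get
\[
\inf_{G} Q_{\GW}(G,\pi)=\iint_{X^2}\langle x,x'\rangle^2\dx\xi\dx\xi-\sum_{i,i'\in I}\hat G_{i,i'}^2\,\pi_i(X)\pi_{i'}(X).
\]
Integrability is guaranteed because \cref{lem:second_moment} gives $\xi\in\p^{(2)}(X)$, so all first moments $a_i(\pi)$ are finite.

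Next, we compute both terms. For the first term, write $\langle x,x'\rangle^2=\tr(xx^\top x'x'^\top)$ and use Fubini, which yields $\tr(M_\xi M_\xi)=\|M_\xi\|_F^2$. The needed integrability is exactly the assumption $\langle\cdot,\cdot\rangle\in L^2(\xi\otimes\xi)$, so interchanging trace and integral is legitimate. For the block means, bilinearity gives
\[
\hat G_{i,i'}=\frac{\langle a_i(\pi),a_{i'}(\pi)\rangle}{\pi_i(X)\pi_{i'}(X)}.
\]
Therefore
\[
\hat G_{i,i'}^2\,\pi_i(X)\pi_{i'}(X)=\frac{\langle a_i,a_{i'}\rangle^2}{\pi_i(X)\pi_{i'}(X)}
=\tr\Bigl(\frac{a_ia_i^\top}{\pi_i(X)}\frac{a_{i'}a_{i'}^\top}{\pi_{i'}(X)}\Bigr).
\]
Summing over $i,i'\in I$ gives $\tr(M(\pi)^2)=\|M(\pi)\|_F^2$, since $M(\pi)$ is symmetric. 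Combining the two computations yields the claimed identity.

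The main obstacle is only bookkeeping. First, one must handle indices with $\pi_i(X)=0$ consistently with the convention $0/0=0$; the symmetric minimizer is then obtained by setting those rows arbitrarily, for instance to zero. Second, one must justify the finiteness and Fubini steps. Both are routine given \cref{lem:second_moment} and $\pi_i\le\xi$.
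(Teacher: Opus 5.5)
Your proposal is correct and follows the paper's proof essentially step for step: discard the indices with $\pi_i(X)=0$, insert the block-mean gauge from \cref{thm:pointwise_lb_gauge} (which by bilinearity equals $\langle a_i(\pi),a_{i'}(\pi)\rangle/(\pi_i(X)\pi_{i'}(X))$), apply \cref{lem:L2_bary} with $a_0=0$, and rewrite both resulting terms via $\langle a,b\rangle^2=\tr(aa^\top bb^\top)$. One small precision: factoring $\iint\tr\bigl(xx^\top x'(x')^\top\bigr)\dx\xi(x)\dx\xi(x')$ into $\tr(M_\xi^2)$ entrywise by Fubini uses the finite second moment from \cref{lem:second_moment}, which you do cite, and not merely the assumption $\langle\cdot,\cdot\rangle\in L^2(\xi\otimes\xi)$ as your sentence suggests.
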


\begin{proof}
    Note that $M_\xi$ is well-defined 
    by \cref{lem:second_moment}.
    Indices $i \in [n]$ with $\pi_i(X) = 0$ 
    contribute to neither side of the claimed identity,
    so we may assume 
    $\pi_i(X) > 0$ 
    for all $i \in [n]$.
    For fixed $\pi \in \Pi(\xi, *_n)$, 
    the minimization of 
    $G \mapsto Q_{\GW}(G, (\pi_i)_{i=1}^n)$ 
    is solved by $\hat{G}$
    from \cref{thm:pointwise_lb_gauge}.
    Due to the bilinearity of the scalar product
    it is given by
    \[
    \hat{G}_{i,i'}
    =\frac{1}{\pi_i(X) \pi_{i'}(X)}
    \iint_{X^2} \langle x,x' \rangle
    \dx \pi_{i}(x)
    \dx \pi_{i'}(x')
    = \frac{\langle a_i(\pi),a_{i'}(\pi)\rangle}{{\pi_i(X)} \pi_{i'}(X)},
    \quad
     i,i' \in [n].
    \]
    Applying \cref{lem:L2_bary}
    with $a_0 = 0$ as usual
    yields
    \begin{equation*}
    \inf_{G \in \G^{(n)}_{\sym}} Q_{\GW}(G, (\pi_i)_{i=1}^n)
    = \iint_{X^2} \langle x, x' \rangle^2 \dx\xi(x) \dx\xi(x')
    - \sum_{i,i'=1}^n \frac{\langle a_i(\pi), a_{i'}(\pi) \rangle^2}{\pi_i(X) \pi_{i'}(X)}.
    \end{equation*}
    Due to
    $\langle a, b \rangle^2 = \tr(a a^\top b b^\top)$ 
    for $a, b \in \R^d$, 
    the first term
    equals $\|M_\xi\|_F^2$ and
    the second term equals 
    $\tr(M(\pi)^2) = \lVert M(\pi) \rVert_F^2$,
    as desired.
\end{proof}

The previous result allows us to
derive appropriate bounds
in the scalar product case.
Before we proceed with this,
we recall the \emph{Löwner order} of symmetric matrices.
That is,
for two symmetric matrices $A,B$,
we write $A \succeq B$ if $A - B$ is positive semi-definite.
In particular,
$A$ is positive semi-definite 
if and only if $A \succeq 0$.
We also write $A \preceq B$
if $B - A$ is positive semi-definite.

The bounds rely on the trace/covariance structure 
of the scalar product GW
problem, 
which has also been exploited e.g.\ in
\cite{delon2022gromov,sliced_gw,ZGMS2024}.

\begin{proof}[Proof of \cref{thm:lower_and_upper_bound_scalar_product}]
    By \cref{lem:second_moment}, 
    it holds
    $\xi \in \p^{(2)}(X)$, 
    so $\qW(\xi)$ and $M_\xi$ are
    well-defined. 
    Before we show the desired bounds, 
    we express $\qGW(\XX)$ via a matrix trace.
    For $\pi \in \Pi(\xi, *_n)$ consider $a_i(\pi)$, $i \in [n]$,
    as well as $M(\pi)$ defined in \cref{eq:def_a_i_and_M}. 
    By \cref{eq:qGW_as_min_Q_GW} and \cref{lem:scalar_reduction},
    \[
    \qGW(\XX)^2
    = \inf_{\pi \in \Pi(\xi, *_n)} \inf_{G \in \G^{(n)}_{\sym}} Q_{\GW}(G,(\pi_i)_{i=1}^n)
    = \inf_{\pi \in \Pi(\xi, *_n)} \bigl( \|M_\xi\|_F^2 - \|M(\pi)\|_F^2 \bigr).
    \]
    Now further define $\tilde M(\pi) \coloneqq M_\xi - M(\pi)$.
    As $M_\xi$ and $M(\pi)$ are symmetric, for every $\pi \in \Pi(\xi, *_n)$
    \[
    \|M_\xi\|_F^2 - \|M(\pi)\|_F^2
    = \tr(M_\xi^2) - \tr(M(\pi)^2)
    = \tr\bigl( \tilde M(\pi)\,(M_\xi + M(\pi)) \bigr),
    \]
    hence
    \begin{equation}\label{eq:proof_sp_master}
    \qGW(\XX)^2 = \inf_{\pi \in \Pi(\xi, *_n)} \tr\bigl( \tilde M(\pi)\,(M_\xi + M(\pi)) \bigr).
    \end{equation}
    We remark that it holds
    \[
    \tilde M(\pi) = \sum_{i=1}^n \int_X \Bigl( x - \frac{a_i(\pi)}{\pi_i(X)} \Bigr)
        \Bigl( x - \frac{a_i(\pi)}{\pi_i(X)} \Bigr)^\top \dx\pi_i(x),
    \]
    which follows by expanding the integrand and using $\sum_{i=1}^n \pi_i = \xi$.
    We note that
    $\tilde{M}(\pi)$
    as well as $M(\pi) = M_\xi - \tilde M(\pi)$ 
    are positive semi-definite,
    hence
    \[
    M_\xi \preceq M_\xi + M(\pi) \preceq 2 M_\xi
    \quad \text{so that}
    \quad 
    \lambda_{\min}(M_\xi) I \preceq M_\xi + M(\pi) \preceq 2 \lambda_{\max}(M_\xi) I.
    \]
    Since $\tilde M(\pi) \succeq 0$,
    the map $A \mapsto \tr(\tilde M(\pi)\,A)$ 
    is
    monotone with respect to the Löwner order 
    $\preceq$. 
    Combining this with
    \cref{eq:proof_sp_master} 
    and the monotonicity of the infimum, 
    we obtain
    \begin{align}\label{eq:proof_both_sides}
    \lambda_{\min}(M_\xi) 
    \inf_{\pi \in \Pi(\xi, *_n)} 
    \tr(\tilde{M}(\pi))
    &\leq 
    \underbrace{
    \inf_{\pi \in \Pi(\xi, *_n)} 
     \tr\bigl( \tilde M(\pi)\,(M_\xi + M(\pi)) \bigr)
    }_{= \qGW(\XX)^2}
    \nonumber
    \\
    &\leq 
    2 \lambda_{\max}(M_\xi)
    \inf_{\pi \in \Pi(\xi, *_n)} 
    \tr(\tilde{M}(\pi)).
    \end{align}
    Finally, note that $\inf_{\pi \in \Pi(\xi, *_n)} \tr(\tilde M(\pi)) = \qW(\xi)^2$.
    Indeed,
    \begin{align*}
    \tr(\tilde M(\pi))
    = \sum_{i=1}^n \int_X \Bigl\| x - \frac{a_i(\pi)}{\pi_i(X)} \Bigr\|^2 \dx\pi_i(x)
    &= \sum_{i=1}^n \int_X \Bigl\| x -
    \Bigl(\frac{1}{\pi_i(X)}\int_{X} x' \dx \pi_i(x')\Bigr)
    \Bigr\|^2 \dx\pi_i(x)
    \\
    &=
    \inf_{(y_i)_{i=1}^n \subset X} 
    \sum_{i=1}^n \int_X \| x -
    y_i\|^2 \dx\pi_i(x),
    \end{align*}
    where the last line follows by 
    \cref{lem:L2_bary} 
    and closed convexity of $X \subset \R^d$.
    Taking the infimum over
    $\pi \in \Pi(\xi,*_n)$ 
    and comparing with 
    \cref{eq:qW_as_min_QW} 
    gives the claim.
    Plugging this into \cref{eq:proof_both_sides} 
    and taking the square root 
    yields the
    desired bounds.
\end{proof}

The smallest eigenvalue in the lower bound of
\cref{thm:lower_and_upper_bound_scalar_product} 
may render it unnecessarily weak,
particularly when it is zero.
A sharper estimate can be achieved
by applying the following result.

\begin{lemma}[Ruhe trace inequality including subspaces]\label{lem:ruhe}
Let $A, B \in \mathbb{R}^{n\times n}$ be symmetric and positive semi-definite with $\textrm{range}(A) \subseteq \textrm{range}(B) \eqqcolon S$ and $s \coloneqq \dim(S)$. Let $\lambda_1(B) \geq \ldots \geq \lambda_s(B) > 0$ be the nonzero eigenvalues of $B$ and $\lambda_1(A) \geq \ldots \geq \lambda_s(A) \geq 0$ the $s$ largest eigenvalues of $A$ (which can include zeros). Then
\[
\tr(AB) \geq \sum_{i=1}^s \lambda_i(A) \lambda_{s+1-i}(B).
\]
\end{lemma}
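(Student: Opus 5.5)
The plan is to restrict everything to the subspace $S$ and then invoke the classical Ruhe trace inequality. For two symmetric positive semi-definite $s\times s$ matrices $A',B'$ with eigenvalues sorted decreasingly, that inequality states
\[
\tr(A'B')\geq \sum_{i=1}^s \lambda_i(A')\lambda_{s+1-i}(B').
\]

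\emph{Step 1 (reduction to $S$).} Choose an orthonormal basis of $\R^n$ whose first $s$ vectors span $S$, and let $U\in\R^{n\times s}$ collect those first $s$ vectors. Since $B$ is symmetric, $S=\mathrm{range}(B)$ is $B$-invariant and $\ker B=S^\perp$. Hence $B=UB'U^\top$ with $B'\coloneqq U^\top BU$. This $B'$ is positive definite on $\R^s$, and its eigenvalues are exactly $\lambda_1(B)\ge\dots\ge\lambda_s(B)>0$. By assumption $\mathrm{range}(A)\subseteq S$, and $A$ is symmetric, so $\ker A\supseteq S^\perp$. Therefore $A=UA'U^\top$ with $A'\coloneqq U^\top AU$. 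The spectrum of $A$ consists of the spectrum of $A'$ together with $n-s$ additional zeros. Since all eigenvalues are non-negative, the $s$ largest eigenvalues of $A$ coincide with the eigenvalues of $A'$; this holds even when some of them are zero.

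\emph{Step 2 (transfer the trace).} Using $U^\top U=I_s$ and cyclicity of the trace,
\[
\tr(AB)=\tr(UA'U^\top UB'U^\top)=\tr(A'B').
\]

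\emph{Step 3 (Ruhe on $\R^s$).} Apply the classical inequality to $A',B'$. If I must prove it rather than cite it, I would diagonalize $B'=\sum_j \beta_j w_jw_j^\top$ and write
\[
\tr(A'B')=\sum_j \beta_j\, w_j^\top A' w_j .
\]
The vector $(w_j^\top A'w_j)_j$ is majorized by the eigenvalue vector of $A'$ (Schur–Horn). Rearrangement then shows that $\sum_j\beta_j d_j$ over vectors $d$ majorized by $\lambda(A')$ is minimized by pairing the largest $\lambda_i(A')$ with the smallest $\beta$. An alternative route is von Neumann's trace inequality applied to $A'$ and $\lambda_1(B')I-B'$.

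The only genuine point of care is Step 1. One must identify the $s$ largest eigenvalues of $A$ with the spectrum of $A'$, which requires $\mathrm{range}(A)\subseteq S$ and the PSD property. One must also pair them with only the \emph{nonzero} eigenvalues of $B$. This pairing is exactly what makes the result sharper than the naive Ruhe bound on $\R^n$, where the smallest eigenvalues of $B$ would be zeros.
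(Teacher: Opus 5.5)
Your proposal is correct and takes the same route as the paper. The paper also uses $S^\perp \subseteq \ker(A)\cap\ker(B)$ to get $\tr(AB)=\tr(A|_S B|_S)$ and then cites the classical Ruhe trace inequality. You do the restriction explicitly through an orthonormal basis $U$ of $S$, and you spell out why the eigenvalues of $U^\top A U$ are the $s$ largest eigenvalues of $A$ and those of $U^\top B U$ are the nonzero eigenvalues of $B$, a step the paper leaves implicit.
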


\begin{proof}
    By symmetry of $A$ and $\textrm{range}(A) \subseteq S$, we have $S^\perp \subseteq \ker(A)$, so $S$ is invariant under $A$, and analogously under $B$. Hence $\tr(AB) = \tr(A_{|S} B_{|S})$. Applying the standard Ruhe trace inequality (see \cite[9.H.1.h.]{marshall2011inequalities}) yields the claim.
\end{proof}

The proof of \cref{thm:lower_and_upper_bound_scalar_product} implicitly relies on the estimate
$\tr(\tilde{M} M_\xi) \geq \lambda_{\min}(M_\xi)\tr(\tilde{M})$ which amounts to
scaling $\xi$ by $\sqrt{\lambda_{\min}}$ along every direction.
Since $0 \preceq \tilde{M} \preceq M_\xi$ implies
$\operatorname{range}(\tilde{M}) \subseteq \operatorname{range}(M_\xi)$, 
applying
\cref{lem:ruhe}
instead
gives the sharper estimate
\[
\tr(\tilde{M} M_\xi)
\;\geq\;
\sum_{i=1}^{s} \lambda_i(\tilde{M})\, \lambda_{s+1-i}(M_\xi),
\qquad s \coloneqq \operatorname{rank}(M_\xi).
\]
This scales the directions of $\tilde{M}$ 
by the varying nonzero eigenvalues of
$M_\xi$. 
In particular, $\lambda_{\min}$ in
\cref{thm:lower_and_upper_bound_scalar_product} 
may be replaced by the smallest
nonzero eigenvalue of $M_\xi$.

\subsection{Monge Maps for the Scalar-Product Gauge}\label{subapp:proof_of_4_8}

In the following,
we prove 
\cref{thm:scalar_product_partition_attainment}.
Due to 
\cref{eq:qGW_as_min_Q_GW} and \cref{lem:scalar_reduction}
the quantization problem
$\qGW(\XX)$
reduces to the problem
$
\sup_{\pi \in \Pi(\xi, *_n)} \|M(\pi)\|_F^2
$.
We use this reduction in the following
to establish the existence of Monge maps
from $\XX$ to solutions of 
the GW quantizers.
By
\cref{thm:GW_quantization_existence_characterization},
this has direct implications 
on the structure of the solutions
themselves and gives a reformulation of the problem
that is akin to the partitioning problem 
\cref{eq:W_quant_as_partitioning_problem}.
Before proving \cref{thm:scalar_product_partition_attainment},
we establish some fundamental properties
of the reduced objective in the following lemma.
The proofs of both the lemma and the theorem rely on the notation
$a_i(\pi)$, $i \in [n]$, and $M(\pi)$,
$\pi \in \Pi(\xi,\ast_n)$,
introduced in \cref{eq:def_a_i_and_M}.

\begin{lemma}\label{lem:M_convex_continuous}
The map $\pi\mapsto\|M(\pi)\|_F^2$ 
is convex and weakly continuous on $\Pi(\xi, *_n)$.
\end{lemma}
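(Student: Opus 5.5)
For convexity, I will first show that $M(\pi)$ is a sum of perspective functions, which makes $M(\pi)$ Löwner-convex in $\pi$. For continuity, I will reduce to weak continuity of the first moments $a_i(\pi)$ and the masses $\pi_i(X)$, with a separate argument for clusters whose mass tends to zero.

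\emph{Convexity.} The map $\pi \mapsto (a_i(\pi), \pi_i(X))$ is linear in $\pi$. The matrix-valued perspective $(a,t) \mapsto a a^\top / t$ on $\R^d \times (0,\infty)$, extended by $0$ at $(0,0)$, is jointly convex in the Löwner order. To see this, note that for every $v \in \R^d$ the scalar function $(a,t) \mapsto \langle v, a\rangle^2/t$ is the perspective of $s \mapsto s^2$, hence convex. The extension to $(0,0)$ is legitimate because $a_i(\pi) = 0$ whenever $\pi_i(X)=0$. It follows that $M(\pi)$ is Löwner-convex along segments in $\Pi(\xi,*_n)$, i.e.\ $M((1-t)\pi+t\eta) \preceq (1-t)M(\pi) + tM(\eta)$, and $M(\cdot) \succeq 0$ throughout.

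Next I need that $A \mapsto \|A\|_F^2 = \tr(A^2)$ is convex and nondecreasing in the Löwner order on the positive semi-definite cone. Monotonicity holds because $0 \preceq A \preceq B$ gives $\tr(A^2) \le \tr(AB) \le \tr(B^2)$, using that $\tr(C\,\cdot)$ is monotone for $C \succeq 0$. Composing a convex nondecreasing map with a Löwner-convex map then gives convexity of $\pi\mapsto\|M(\pi)\|_F^2$.

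\emph{Weak continuity.} Let $\pi^{(k)} \weakly \pi$ in $\Pi(\xi,*_n)$.
\begin{itemize}
\item The masses converge, $\pi^{(k)}_i(X) \to \pi_i(X)$, since $[n]$ is discrete.
\item The first moments converge, $a_i(\pi^{(k)}) \to a_i(\pi)$. I would argue as in \cref{lem:product_convergence}, in its single-integral version: the coordinate functions $x_j$ lie in $L^1(\xi)$ by \cref{lem:second_moment}, we have $\pi_i^{(k)} \le \xi$, and $C_b$ functions are dense in $L^1(\xi)$.
\item For indices with $\pi_i(X) > 0$, these two facts give $M$-term convergence directly.
\item For indices with $\pi_i(X) = 0$, Jensen gives $\|a_i(\pi^{(k)})\|^2/\pi_i^{(k)}(X) \le \int \|x\|^2 \dx \pi^{(k)}_i$. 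The right-hand side tends to $\int \|x\|^2 \dx\pi_i = 0$, by the same $L^1(\xi)$ density argument applied to $\|x\|^2 \in L^1(\xi)$. So these terms tend to $0 = a_i(\pi)a_i(\pi)^\top/\pi_i(X)$.
\end{itemize}
Hence $M(\pi^{(k)}) \to M(\pi)$, and the Frobenius norm is continuous.

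The main obstacle is the degenerate-mass case in the continuity argument. It is handled by the Jensen bound together with uniform integrability, which comes from domination by $\xi$. A subtler point is making the Löwner-monotonicity step for $\|\cdot\|_F^2$ airtight, since it fails off the positive semi-definite cone; convexity of the perspective ensures we only ever compare positive semi-definite matrices.
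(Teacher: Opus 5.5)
Your proposal is correct and follows essentially the same route as the paper. For convexity, it uses Löwner convexity of the matrix-fractional terms $a_i(\pi)a_i(\pi)^\top/\pi_i(X)$ together with monotonicity of $\tr(A^2)$ on the positive semi-definite cone; for continuity, it uses convergence of masses and first moments, plus the Jensen bound $\|a_i\|^2/\pi_i(X)\le\int\|x\|^2\dx\pi_i$ for vanishing clusters. The differences are cosmetic: you justify matrix-fractional convexity through scalar perspectives in each direction $v$, you handle zero masses by extending by $0$ at $(0,0)$, and you get convergence of the moments from $L^1(\xi)$-density of $C_b$ (as in \cref{lem:product_convergence}), where the paper uses explicit cutoffs $\chi_R$ with tail bounds coming from $\pi_i^{(k)}\le\xi$.
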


\begin{proof}
    Set 
    \[
    F:\Pi(\xi, *_n) \to [0,\infty), \pi\mapsto\|M(\pi)\|_F^2.
    \]
    For convexity,
    let $\pi^{(0)}, \pi^{(1)} \in \Pi(\xi, *_n)$, $t \in [0,1]$ 
    and
    $\pi^{(t)} \coloneqq (1-t)\pi^{(0)} + t\pi^{(1)} \in \Pi(\xi, *_n)$.
    Note that $(a_i(\pi), \pi_i(X))$ depends linearly on $\pi$.
    Furthermore,
    the
    joint convexity of the matrix-fractional
    function $(a,p) \mapsto a a^\top / p$ in the Löwner order
    (see e.g.\ \cite[Sec.~3.1.7]{boyd2004convex}) 
    gives, 
    for each $i \in [n]$ with $\pi_i^{(0)}(X), \pi_i^{(1)}(X) > 0$,
    \[
    \frac{a_i(\pi^{(t)}) (a_i(\pi^{(t)}))^\top}{\pi_i^{(t)}(X)}
    \ \preceq\
    (1-t)\,\frac{a_i(\pi^{(0)}) (a_i(\pi^{(0)}))^\top}{\pi_i^{(0)}(X)}
    + t\,\frac{a_i(\pi^{(1)}) (a_i(\pi^{(1)}))^\top}{\pi_i^{(1)}(X)}.
    \]
    For the remaining indices the inequality holds directly:
    If
    $\pi_i^{(0)}(X) = \pi_i^{(1)}(X) = 0$, then
    $a_i(\pi^{(0)}) = a_i(\pi^{(1)}) = a_i(\pi^{(t)}) = 0$ 
    and all three terms vanish.
    If exactly one vanishes, 
    say 
    $\pi_i^{(0)}(X) = 0$ (so $a_i(\pi^{(0)}) = 0$), 
    then by linearity 
    $a_i(\pi^{(t)}) = t\,a_i(\pi^{(1)})$ 
    and $\pi_i^{(t)}(X) = t\,\pi_i^{(1)}(X)$.
    Thus, for $t \in (0,1)$ both sides equal
    $t\,a_i(\pi^{(1)}) (a_i(\pi^{(1)}))^\top / \pi_i^{(1)}(X)$
    (the cases $t \in \{0,1\}$ being trivial).
    Hence the inequality holds for every 
    $i \in [n]$, 
    and summing over $i \in [n]$ yields
    \[
    0 \preceq M(\pi^{(t)}) \preceq (1-t)\,M(\pi^{(0)}) + t\,M(\pi^{(1)}).
    \]
    Moreover, $\lVert \cdot \rVert_F$
    is monotone with respect to
    $\preceq$ 
    for positive semi-definite matrices,
    i.e.\
    for $0 \preceq A \preceq B$,
    \[
    \lVert A \rVert_F^2 = \tr(A^2) \leq \tr(A B) \leq \tr(B^2) = \lVert B \rVert_F^2,
    \]
    as the trace of a product of two positive semi-definite matrices is non-negative.
    Combining both 
    with the triangle inequality 
    and the convexity of 
    $s \mapsto s^2$, 
    we obtain
    \[
    F(\pi^{(t)})
    = \tr\!\big( M(\pi^{(t)})^2 \big)
    \leq \tr\!\Big( \big( (1-t) M(\pi^{(0)}) + t M(\pi^{(1)}) \big)^2 \Big)
    = \big\lVert (1-t) M(\pi^{(0)}) + t M(\pi^{(1)}) \big\rVert_F^2
    \]
    \[
    \leq \big( (1-t)\lVert M(\pi^{(0)})\rVert_F + t \lVert M(\pi^{(1)})\rVert_F \big)^2
    \leq (1-t) F(\pi^{(0)}) + t F(\pi^{(1)}),
    \]
    that is, $F$ is convex.
    
    For continuity, let 
    $\pi^{(k)} \weakly \pi$ in $\Pi(\xi, *_n)$,
    we show $M(\pi^{(k)}) \to M(\pi)$,
    $k \to \infty$.
    Fix a continuous cutoff $\chi_R \colon X \to [0,1]$ with
    $\1_{\{\|x\| \le R\}} \le \chi_R(x) \le \1_{\{\|x\| \le R+1\}}$,
    $x \in X$.
    Each coordinate of 
    $x \mapsto x\,\chi_R(x)$
    is bounded and continuous, so
    $\pi_i^{(k)} \weakly \pi_i$ gives
    $\int_X x\,\chi_R \dx\pi_i^{(k)} \to \int_X x\,\chi_R \dx\pi_i$ 
    as $k \to \infty$.
    Moreover, 
    since $1 - \chi_R$ is supported on $\{\|x\| > R\}$ and 
    by $\pi_i^{(k)}\le\xi$,
    we obtain
    \[
    \Big\| \int_X x\,(1-\chi_R) \dx\pi_i^{(k)} \Big\|
    \ \le\ \int_{\{\|x\|>R\}} \|x\| \dx\pi_i^{(k)}
    \ \le\ \int_{\{\|x\|>R\}} \|x\| \dx\xi
    \to 0,
    \]
    uniformly in $k$, and the same bound holds with $\pi_i$ in place of
    $\pi_i^{(k)}$. 
    Hence, for every $R>0$,
    \[
    \big\|a_i(\pi^{(k)})-a_i(\pi)\big\|
    \le \Big\|\int_X x\,\chi_R\dx\pi_i^{(k)}-\int_X x\,\chi_R\dx\pi_i\Big\|
    + \int_{\{\|x\|>R\}}\!\|x\|\dx\pi_i^{(k)}
    + \int_{\{\|x\|>R\}}\!\|x\|\dx\pi_i .
    \]
    Letting first $k\to\infty$, the first term vanishes while the
    remaining terms are each bounded by
    $\int_{\{\|x\|>R\}}\|x\|\dx\xi$.
    Letting then $R\to\infty$ gives
    $a_i(\pi^{(k)})\to a_i(\pi)$.
    If $\pi_i(X) > 0$, 
    the $i$-th summand of $M(\pi^{(k)})$ 
    converges to that of $M(\pi)$.
    If $\pi_i(X) = 0$, 
    then for any $R > 0$,
    Jensen's inequality yields
    \[
    \Big\lVert 
    \frac{a_i(\pi^{(k)}) (a_i(\pi^{(k)}))^\top}{\pi_i^{(k)}(X)} 
    \Big\rVert_F
    = \frac{\lVert a_i(\pi^{(k)}) \rVert^2}{\pi_i^{(k)}(X)}
    \leq \int_X \lVert x \rVert^2 \dx\pi_i^{(k)}
    \leq \underbrace{R^2 \pi_i^{(k)}(X)}_{\to 0, k \to \infty}
    + \int_{\{\lVert x \rVert > R\}} \lVert x \rVert^2 \dx\xi.
    \]
    Hence,
    \[
    \limsup_{k \to \infty} 
    \Big\lVert 
    \frac{a_i(\pi^{(k)}) (a_i(\pi^{(k)}))^\top}{\pi_i^{(k)}(X)} 
    \Big\rVert_F
    \leq \int_{\{\lVert x \rVert > R\}} \lVert x \rVert^2 \dx\xi
    \xrightarrow{R \to \infty} 0.
    \]
    Thus, 
    $M(\pi^{(k)}) \to M(\pi)$ 
    and $F$ is weakly continuous on $\Pi(\xi, *_n)$.
\end{proof}

\begin{proof}[Proof of \cref{thm:scalar_product_partition_attainment}]
    Using \cref{eq:qGW_as_min_Q_GW}
    together with \cref{lem:scalar_reduction},
    the quantization problem \cref{eq:GW_quant} 
    becomes equivalent to maximizing
    \[
    F(\pi) \coloneqq \lVert M(\pi) \rVert_F^2
    \]
    over $\Pi(\xi, *_n)$.
    By \cref{lem:M_convex_continuous},
    $F$ is convex and weakly continuous on $\Pi(\xi, *_n)$.
    Moreover, $\Pi(\xi, *_n)$ is convex 
    and weakly compact.
    Thus,
    the Bauer maximum principle,
    see e.g.\ \cite[Thm.~7.69]{aliprantis2006infinite},
    ensures that $F$ attains its maximum over $\Pi(\xi, *_n)$ 
    at an extreme point of $\Pi(\xi, *_n)$.

    Now, 
    we show that every extreme point of $\Pi(\xi, *_n)$ is induced by a map.
    Let $\pi \in \Pi(\xi, *_n)$ be extreme 
    and let 
    $\pi = \int_X \delta_x \otimes \kappa_x \dx\xi(x)$ 
    be its disintegration,
    that is
    \[
    \int_{X \times [n]} f(x,i) \dx\pi(x,i)
    = \int_X \int_{[n]} f(x,i) \dx\kappa_x(i) \dx\xi(x)
    \qquad \text{for all bounded measurable } f.
    \]
    To achieve a contradiction,
    we assume that
    $B \coloneqq \{x \in X : \max_{i \in [n]} \kappa_x(\{i\}) < 1\}$
    satisfies $\xi(B) > 0$.
    For $x \in B$, 
    let $i_x < j_x$ denote the two smallest indices 
    with $\kappa_x(\{i_x\}) > 0$, $\kappa_x(\{j_x\}) > 0$ 
    and define the measurable family of signed measures
    \[
    \sigma_x \coloneqq
    \min\big\{\kappa_x(\{i_x\}), \kappa_x(\{j_x\})\big\}
    \cdot \big(\delta_{i_x} - \delta_{j_x}\big) \1_B(x).
    \]
    Clearly $\sigma_x([n]) = 0$ for all $x \in X$, 
    hence
    $\pi^{\pm} \coloneqq 
    \int_X \delta_x \otimes (\kappa_x \pm \sigma_x) \dx\xi(x)$
    define two distinct elements of $\Pi(\xi, *_n)$.
    Furthermore,
    $\pi = \frac{1}{2}(\pi^+ + \pi^-)$,
    contradicting extremality.
    Hence $\xi(B)=0$ and thus $\kappa_x$ 
    is a Dirac measure for $\xi$-a.e.\ $x \in X$,
    i.e., $\pi = (\id, T)_\# \xi$ 
    for a measurable map $T \colon X \to [n]$.

    By the first part of the proof,
    there exists a measurable $T \colon X \to [n]$ 
    such that 
    $\pi^* \coloneqq (\id, T)_\# \xi$ 
    maximizes $F$ over $\Pi(\xi, *_n)$
    and thus also minimizes 
    \begin{equation}\label{eq:proof_scalar_reduction}
    \inf_{G \in \G^{(n)}_{\sym}} Q_{\GW}(G, (\pi_i)_{i=1}^n)
    = \|M_\xi\|_F^2 - \|M(\pi)\|_F^2
    \end{equation}
    over $\Pi(\xi, *_n)$, cf.~\cref{lem:scalar_reduction}.
    By \cref{eq:qGW_as_min_Q_GW},
    this implies
    $\qGW(\XX)^2 = \inf_{G \in \G_{\sym}^{(n)}} Q_{\GW}(G,(\pi_i^*)_{i=1}^n)$.
    We set $V_i \coloneqq T^{-1}(\{i\})$.
    For indices $i \in [n]$ with $\xi(V_i) = 0$,
    it holds $\pi_i^* = 0$,
    so they contribute to neither side of \cref{eq:proof_scalar_reduction},
    and the corresponding entries of $G$ 
    do not affect $Q_{\GW}(G, (\pi_i^*)_{i=1}^n)$.
    Hence we may apply \cref{thm:pointwise_lb_gauge}
    to the indices with $\xi(V_i) > 0$.
    Thus,
    the latter is minimized by the block-mean gauge, 
    which by bilinearity of the
    scalar product equals
    \[
    \hat{G}_{i,i'}
    = \biggl\langle \frac{a_i(\pi^*)}{\pi_i^*(X)}, \frac{a_{i'}(\pi^*)}{\pi_{i'}^*(X)} \biggr\rangle
    = \biggl\langle
    \dfrac{1}{\xi(V_i)} \int_{V_i} x \dx\xi(x),
    \dfrac{1}{\xi(V_{i'})} \int_{V_{i'}} x \dx\xi(x)
    \biggr\rangle
    = \langle \m_\xi(V_i), \m_\xi(V_{i'}) \rangle,
    \]
    where we used
    $\pi^*_i = \xi\vert_{V_i}$.
    Now set
    $\hat{\upsilon} = \sum_{i=1}^n \xi(V_i) \delta_{i}$
    giving rise to the gm-space $\hat{\YY} = ([n],\hat{G},\hat{\upsilon})$.
    As $\pi^* \in \Pi(\xi,\hat\upsilon)$,
    \[
    \qGW(\XX)^2 \le \GW_2^2(\XX,\hat\YY) \le Q_{\GW}(\hat G,(\pi_i^*)) = \qGW(\XX)^2,
    \]
    so $\hat\YY$ is a solution of the quantization problem and $\pi^* = (\id,T)_\#\xi$ an optimal coupling.
\end{proof}

The crux of the argument
is that every extreme point of $\Pi(\xi, *_n)$ 
is induced by a map.
We establish this directly,
by a perturbation argument tailored to our setting. 
Alternatively,
it can be derived from the theory of Young measures.
More precisely,
under the disintegration
$\pi = \int_X \delta_x \otimes \kappa_x \dx\xi(x)$, 
the set $\Pi(\xi, *_n)$ is identified with the
transition kernels $x \mapsto \kappa_x \in \p([n])$, 
whose extreme points are precisely the
Dirac-valued ones (see e.g.\ \cite{castaing1977convex}).

\subsection{Closed-Form for the Scalar-Product Case in 1D}\label{subapp:proof_of_4_9}

\begin{proof}[Proof of \cref{thm:closed_form_1d}]
As 
$\|u\|=1$, 
we obtain
\[
T(x) \,T(x')
= u^\top x\, u^\top x'
= x^\top \underbrace{u u^\top x'}_{= x'} 
= x^\top x'
= \langle x,x' \rangle,
\qquad x,x' \in X \subset \spann(u).
\]
Thus, setting $\tilde{\xi}\coloneqq T_\#\xi$ 
we see that 
$\XX$ is homomorphic to
$\tilde{\XX} = (\R,(s,t)\mapsto st,\tilde{\xi})$. 
For a partition
$(V_i)_{i=1}^n$ of $\R$
write
\[
S\bigl((V_i)_{i=1}^n\bigr)
\coloneqq
\sum_{i=1}^n \tilde{\xi}(V_i)\,\m_{\tilde{\xi}}(V_i)^2.
\]
Since $\XX$ and $\tilde{\XX}$ are homomorphic, 
it holds
$\qGW(\XX) = \qGW(\tilde{\XX})$.
Using
\cref{eq:GW_quant_as_partitioning_problem},
we obtain
\begin{align*}
\qGW(\XX)^2
= \qGW(\tilde{\XX})^2
&=
\inf_{(V_i)_{i=1}^n}
\sum_{i,i'=1}^n\int_{V_i\times V_{i'}}
\bigl(st-\m_{\tilde{\xi}}(V_i)\m_{\tilde{\xi}}(V_{i'})\bigr)^2
\dx\tilde{\xi}(s)\dx\tilde{\xi}(t)
\\
&=
\inf_{(V_i)_{i=1}^n}
\sum_{i,i'=1}^n\Bigl(
\int_{V_i\times V_{i'}} (st)^2 \dx\tilde{\xi}(s)\dx\tilde{\xi}(t)
- \m_{\tilde{\xi}}(V_i)^2 \m_{\tilde{\xi}}(V_{i'})^2\,\tilde{\xi}(V_i)\tilde{\xi}(V_{i'})
\Bigr)
\\
&=
\inf_{(V_i)_{i=1}^n}
\Bigl(
\Bigl(\int_\R t^2 \dx\tilde{\xi}\Bigr)^2
- \Bigl(\sum_{i=1}^n \tilde{\xi}(V_i)\, \m_{\tilde{\xi}}(V_i)^2\Bigr)^2
\Bigr)
\\
&=\inf_{(V_i)_{i=1}^n}
\bigl(M_2(\tilde{\xi})^2-S((V_i))^2\bigr)
=M_2(\tilde{\xi})^2-\sup_{(V_i)}S((V_i))^2 ,
\end{align*}
where equality in the second line uses $\int_{V_i} s\dx\tilde{\xi}=\tilde{\xi}(V_i)\,\m_{\tilde{\xi}}(V_i)$.
On the other hand, 
a similar derivation yields
\[
\sum_i\int_{V_i}(t-\m_{\tilde{\xi}}(V_i))^2\dx\tilde{\xi}(t) = M_2(\tilde{\xi})-S((V_i)),
\]
so that \cref{eq:W_quant_as_partitioning_problem} gives
$\qW(\tilde{\xi})^2=M_2(\tilde{\xi})-\sup_{(V_i)}S((V_i)_{i=1}^n)$. 
Both problems amount to maximizing $S$, and since $S \ge 0$ it holds
$\sup_{(V_i)} S((V_i))^2 = \bigl(\sup_{(V_i)} S((V_i))\bigr)^2$.
Therefore
\[
\qGW(\XX)^2
= M_2(\tilde{\xi})^2 - \sup_{(V_i)} S((V_i))^2
= M_2(\tilde{\xi})^2 - \bigl(M_2(\tilde{\xi}) - \qW(\tilde{\xi})^2\bigr)^2
= \qW(\tilde{\xi})^2\bigl(2 M_2(\tilde{\xi}) - \qW(\tilde{\xi})^2\bigr),
\]
as desired.
\end{proof}

\section{Proofs of \texorpdfstring{\cref{sec:alg}}{Section 5}}

\subsection{Directional Derivative of \texorpdfstring{$R$}{R}}\label{subapp:proof_of_6_1}

\begin{proof}[Proof of \cref{prop:R_directional_derivative}]
    Let $\eta \in \Pi(\xi, *_n)$.
    Write 
    $\pi_t \coloneqq (1-t)\pi + t\eta = \pi + t (\eta - \pi)$. 
    Since $\eta_i \in \M_+(X)$ and $\pi_i(X) > 0$ 
    by assumption,
    it holds
    \[
    (\pi_t)_i(X) = (1-t)\pi_i(X) + t\,\eta_i(X) > 0, \quad t \in [0,1)
    \]
    so $\hat {G}(\pi_t)$ is well-defined. 
    Let
    \[
    a_{i,i'}(t) \coloneqq \iint_{X^2} g\dx(\pi_t)_i\dx(\pi_t)_{i'},
    \qquad
    b_i(t) \coloneqq (\pi_t)_i(X)=(1-t)\,\pi_i(X)+t\,\eta_i(X),
    \]
    so that 
    $\hat{G}(\pi_t)_{i,i'} = \frac{a_{i,i'}(t)}{b_i(t)b_{i'}(t)}$.
    Since 
    $b_i(t)=(1-t)\,\pi_i(X)+t\,\eta_i(X)\geq(1-t)\,\pi_i(X)>0$ for
    $t\in[0,1)$, 
    each $\hat G(\pi_t)_{i,i'}$ is a
    quotient of polynomials in 
    $t$ 
    with non-vanishing denominator, 
    hence differentiable on $[0,1)$.
    Similarly,
    as $t \mapsto \pi_t$ is affine,
    \[
    t \mapsto Q_{\GW}(G,\pi_t)
    = 
    \sum_{i,i'\in[n]}\iint_{X^2}
    \bigl(g-G_{i,i'}\bigr)^2\dx(\pi_t)_i\dx(\pi_t)_{i'}
    \]
    is a quadratic polynomial.
    In addition,
    $Q_{\GW}$ is a quadratic polynomial in its first argument.
    Therefore,
    as a composition of differentiable functions,
    $t\mapsto R(\pi_t)=Q_{\GW}(\hat G(\pi_t),\pi_t)$ 
    is differentiable on $[0,1)$.
    Hence, the one-sided limit
    $\lim_{t\downarrow0}\tfrac{R(\pi_t)-R(\pi)}{t}$ exists and equals
    $\tfrac{\mathrm d}{\mathrm dt}\big|_{t=0}R(\pi_t)$,
    so it suffices to compute the latter.
    As $\hat G(\pi_t)$ 
    is the unconstrained minimizer 
    of the strictly convex quadratic
    $G\mapsto Q_{\GW}(G,\pi_t)$ (\cref{thm:pointwise_lb_gauge}), 
    its first-order
    optimality gives $\partial_G Q_{\GW}(\hat G(\pi_t),\pi_t)=0$, 
    so the chain rule gives
    \[
    \frac{\mathrm d}{\mathrm dt} R(\pi_t)
    = 
    \underbrace{\partial_G Q_{\GW}(\hat G(\pi_t),\pi_t)
    }_{=\,0}
      \bigl[\tfrac{\mathrm d}{\mathrm dt} \hat G(\pi_t)\bigr]
    + \partial_\pi Q_{\GW}(\hat G(\pi_t),\pi_t)[\underbrace{\frac{\mathrm{d} \pi_t}{ \mathrm{d} t}}_{= (\eta - \pi)}].
    \]
    For the remaining derivative, 
    we see that 
\begin{align*}
\partial_\pi Q_{\GW}(G,\gamma)[\alpha]
&= \lim_{s\to 0}\frac{Q_{\GW}(G,\gamma+s\,\alpha)-Q_{\GW}(G,\gamma)}{s}
\\
&= \lim_{s\to 0}\sum_{i,i'\in[n]}\iint_{X^2}(g-G_{i,i'})^2
   \bigl(\dx \alpha_i\dx\gamma_{i'} + \dx\gamma_i\dx \alpha_{i'} + s\,\dx \alpha_i\dx \alpha_{i'}\bigr)
\\
&= \sum_{i,i'\in[n]}\iint_{X^2}(g-G_{i,i'})^2
   \bigl(\dx \alpha_i\dx\gamma_{i'} + \dx\gamma_i\dx \alpha_{i'}\bigr)
\\
&= 2 \sum_{i,i'\in[n]}
\iint_{X^2}(g-G_{i,i'})^2
\dx \alpha_i
\dx\gamma_{i'},
\qquad G \in \G_{\sym}^{(n)}, \gamma \in \Pi(\xi, *_n),
\end{align*}
where the last line follows by symmetry 
of $g$ and $G$.
In summary, 
we have
\begin{align*}
    \frac{\mathrm d}{\mathrm dt} R(\pi_t)
    = \partial_\pi Q_{\GW}(\hat G(\pi_t),\pi_t)[\eta - \pi]
    = 2\sum_{i,i'\in[n]}\iint_{X^2}\bigl(g-\hat G(\pi_t)_{i,i'}\bigr)^2
   \dx(\pi_t)_{i'}
   \dx(\eta_i-\pi_i).
\end{align*}
The formula holds for all $t \in [0,1)$,
in particular for $t = 0$,
we obtain
\begin{align*}
    \frac{\mathrm d}{\mathrm dt}\Big|_{t=0} R(\pi_t)
    = 2\sum_{i,i'\in[n]}\iint_{X^2}\bigl(g-\hat G(\pi)_{i,i'}\bigr)^2
   \dx \pi_{i'}
   \dx(\eta_i-\pi_i)
   = 2 \sum_{i=1}^n \int_X \hat{c}_i^{(\pi)} \dx (\eta_i - \pi_i),
\end{align*}
as desired.
\end{proof}

\subsection{Convergence to Stationary Quantizers}\label{subapp:proof_of_6_3}

\begin{proof}[Proof of \cref{thm:convergence}]
Let $(\pi^{(k)})_k$ be generated by \cref{alg:1} with the line search of
\cref{rem:line_search}, i.e.\ $\pi^{(k+1)}=(1-t_k)\pi^{(k)}+t_k\eta^{(k)}$ with
$\eta^{(k)}_i=\xi\vert_{V_i^{(k)}}$ for 
a Voronoi partition 
$(V_i^{(k)})_{i=1}^n$
with respect to
$(\hat c_i^{(\pi^{(k)})})_{i=1}^n$, 
and
$t_k \in \argmin_{s\in[0,1]} Q_{\GW}\!\bigl(\hat G(\pi^{(k)}),(1-s)\pi^{(k)}+s\eta^{(k)}\bigr)$.
Analogously to \cref{rem:line_search},
let $a_k$ and $b_k$ be
given by
\begin{align*}
&a_k \coloneqq 
\sum_{i,i'=1}^n \iint_{X^2} 
\bigl( g - \hat G(\pi^{(k)})_{i,i'} \bigr)^2 
\dx (\eta_i^{(k)} - \pi_i^{(k)}) \dx (\eta_{i'}^{(k)} - \pi_{i'}^{(k)}),
\\
&b_k \coloneqq -2\sum_{i=1}^n\int_X \hat c_i^{(\pi^{(k)})}\dx(\eta^{(k)}_i-\pi^{(k)}_i).
\end{align*}
Note that $b_k$ is non-negative 
since, by \cref{lem:linear_assignment}, 
$\eta^{(k)}$ minimizes
$\eta\mapsto\sum_i\int_X\hat c_i^{(\pi^{(k)})}\dx\eta_i$ over $\Pi(\xi, *_n)$.
Before showing stationarity, 
we show that $b_k \to 0$, as $k \to \infty$.
It holds
\[
Q_{\GW}\!\bigl(\hat G(\pi^{(k)}),(1-s)\pi^{(k)}+s\eta^{(k)}\bigr)
= R(\pi^{(k)}) - s\,b_k + s^2 a_k,\qquad s\in[0,1].
\]
By construction, 
$t_k$ minimizes this quadratic 
over $[0,1]$,
so that, with
$\pi^{(k+1)}=(1-t_k)\pi^{(k)}+t_k\eta^{(k)}$,
\[
Q_{\GW}\!\bigl(\hat G(\pi^{(k)}),\pi^{(k+1)}\bigr)
= \min_{s\in[0,1]}\bigl(R(\pi^{(k)}) - s\,b_k + s^2 a_k\bigr).
\]
Since $R(\pi)=\inf_{G}Q_{\GW}(G,\pi)\leq Q_{\GW}(\hat G(\pi^{(k)}),\pi)$ 
for every $\pi\in\Pi(\xi, *_n)$,
in particular $R(\pi^{(k+1)})\leq Q_{\GW}(\hat G(\pi^{(k)}),\pi^{(k+1)})$, 
and therefore
\[
R(\pi^{(k)})-R(\pi^{(k+1)})
\ \geq\ R(\pi^{(k)}) - \min_{s\in[0,1]}\bigl(R(\pi^{(k)}) - s\,b_k + s^2 a_k\bigr)
= \max_{s\in[0,1]}\bigl(s\,b_k - s^2 a_k\bigr).
\]
Each $\hat G(\pi^{(k)})_{i,i'}$ is an average of $g$ and
since $g$ is bounded, we have
$\lvert \hat G(\pi^{(k)})_{i,i'}\rvert \leq \lVert g\rVert_\infty$
and thus
$(g - \hat G(\pi^{(k)})_{i,i'})^2 \leq 4\lVert g\rVert_\infty^2$.
As each signed measure $\eta_i^{(k)} - \pi_i^{(k)}$ has total variation at most $2$,
it follows that $a_k$ is bounded above by some
$A \in (0,\infty)$ uniformly in $k$.
Using this bound,
we get
\[
R(\pi^{(k)})-R(\pi^{(k+1)})
\ \geq\ \max_{s\in[0,1]}\bigl(s\,b_k - s^2 A\bigr)
\ \geq\ \min\Bigl\{\tfrac{b_k}{2},\,\tfrac{b_k^2}{4A}\Bigr\},
\]
where the last inequality holds since the concave function $s\mapsto s\,b_k - s^2 A$ is
maximized on $[0,1]$ at $s^\ast=\min\{1,\tfrac{b_k}{2A}\}$, yielding $\tfrac{b_k^2}{4A}$ (if
$s^\ast=\tfrac{b_k}{2A}$) or $b_k-A\geq\tfrac{b_k}{2}$ (if $s^\ast=1$, i.e., $b_k\geq 2A$).
Summing the per-step bound,
for every $K\in\N$,
we obtain
\[
R(\pi^{(0)})
\geq
R(\pi^{(0)}) - R(\pi^{(K)})
=
\sum_{k=0}^{K-1}\bigl(R(\pi^{(k)})-R(\pi^{(k+1)})\bigr)
\geq
\sum_{k=0}^{K-1}\min\Bigl\{\tfrac{b_k}{2},\tfrac{b_k^2}{4A}\Bigr\}.
\]
Letting $K\to\infty$ yields
$\sum_{k=0}^{\infty}\min\bigl\{\tfrac{b_k}{2},\tfrac{b_k^2}{4A}\bigr\}\leq R(\pi^{(0)})<\infty$,
so $b_k\to 0$
as $k \to \infty$.

Finally, 
we show the desired stationarity.
Let $\pi^\ast$ be a cluster point with
$\pi^\ast_i(X)>0$ for all $i$. 
By compactness of $\Pi(\xi, *_n)$, 
pass to subsequences
$\pi^{(k_j)}\weakly\pi^\ast$
and $\eta^{(k_j)}\weakly\bar\eta\in\Pi(\xi, *_n)$.
We first record the convergence
\begin{equation}\label{eq:proof_cost_convergence}
\int_X \hat c_i^{(\pi^{(k_j)})}\dx\sigma_i^{(k_j)}
\xrightarrow{j\to\infty}
\int_X \hat c_i^{(\pi^\ast)}\dx\sigma_i,
\qquad i\in[n],
\end{equation}
valid for any $(\sigma^{(k_j)})_j\subset\Pi(\xi, *_n)$ with $\sigma^{(k_j)}\weakly\sigma$.
Indeed, 
firstly since $\pi_i^{(k_j)}(X)\to\pi_i^\ast(X)>0$ 
and, 
by \cref{lem:product_convergence}
with $h=g$, 
the numerators of $\hat G(\pi^{(k_j)})_{i,i'}$ converge,
we have
$\hat G(\pi^{(k_j)})_{i,i'}\to\hat G(\pi^\ast)_{i,i'}$ for all $i,i'\in[n]$. 
Expanding the square in
\[
\int_X \hat c_i^{(\pi^{(k_j)})}\dx\sigma_i^{(k_j)}
= \sum_{i'=1}^n \iint_{X^2}\bigl(g-\hat G(\pi^{(k_j)})_{i,i'}\bigr)^2
\dx\sigma_i^{(k_j)}\dx\pi_{i'}^{(k_j)}
\]
and applying \cref{lem:product_convergence} to $h\in\{g^2, g, 1\}\subset L^1(\xi\otimes\xi)$
(using that $g$ is bounded) yields \cref{eq:proof_cost_convergence}.
Applying \cref{eq:proof_cost_convergence} with $\sigma^{(k_j)}=\eta^{(k_j)}$ and with
$\sigma^{(k_j)}=\pi^{(k_j)}$ gives
$b_{k_j}\to -2\sum_i\int_X\hat c_i^{(\pi^\ast)}\dx(\bar\eta_i-\pi^\ast_i)$
as $j \to \infty$.
By the first part, $(b_{k_j})_{j \in \N}$ also converges to $0$,
so that
\begin{equation}\label{eq:proof_stationary_point_condition}
2\sum_i\int_X\hat c_i^{(\pi^\ast)}\dx(\bar\eta_i-\pi^\ast_i)=0.
\end{equation}
We conclude the proof
by showing
\begin{equation}\label{eq:proof_to_show}
2\sum_i\int_X \hat c_i^{(\pi^\ast)}\dx \eta_i
\geq
2\sum_i\int_X \hat c_i^{(\pi^\ast)}\dx \bar\eta_i
\qquad\text{for all }\eta\in\Pi(\xi, *_n).
\end{equation}
By construction,
\cref{lem:linear_assignment}
ensures that
$\eta^{(k)}$ minimizes
$\eta\mapsto 2\sum_i\int_X \hat c_i^{(\pi^{(k)})}\dx\eta_i$ over $\Pi(\xi, *_n)$,
so for every $\eta\in\Pi(\xi, *_n)$,
\[
2\sum_i\int_X \hat c_i^{(\pi^{(k)})}\dx\eta_i
\geq 
2\sum_i\int_X \hat c_i^{(\pi^{(k)})}\dx\eta^{(k)}_i.
\]
We evaluate this inequality along the subsequence 
$(k_j)_j$ and pass to the limit
via \cref{eq:proof_cost_convergence}.
Firstly, with the constant sequence $\sigma^{(k_j)}\equiv\eta$
the left-hand side converges to
$2\sum_i\int_X \hat c_i^{(\pi^\ast)}\dx \eta_i$.
Secondly, with $\sigma^{(k_j)}=\eta^{(k_j)}$
the right-hand side converges to
$2\sum_i\int_X \hat c_i^{(\pi^\ast)}\dx \bar\eta_i$
and we obtain \cref{eq:proof_to_show}.
Combining this inequality 
with \cref{eq:proof_stationary_point_condition}, 
for every
$\eta\in\Pi(\xi, *_n)$,
\[
2\sum_i\int_X\hat c_i^{(\pi^\ast)}\dx(\eta_i-\pi^\ast_i)
\ \geq\ 2\sum_i\int_X\hat c_i^{(\pi^\ast)}\dx(\bar\eta_i-\pi^\ast_i)
\ =\ 0,
\]
so $\pi^\ast$ is stationary,
as desired.
\end{proof}

\subsection{Unit-Step Descent for the Scalar-Product Gauge}\label{subapp:proof_of_6_4}

\begin{proof}[Proof of \cref{cor:scalar_product_convergence}]
    From \cref{alg:1},
    let $(\pi^{(k)})_{k \in \N}$ and $((V_i^{(k)})_{i=1}^n)_{k \in \N}$
    be the sequence of measures and Voronoi partitions, respectively.
    By \cref{lem:scalar_reduction} 
    and \cref{lem:M_convex_continuous},
    $R$ is concave on $\Pi(\xi, *_n)$,
    that is
    \[
    (1-t) R(\pi) + t R(\gamma)
    \leq
    R((1-t)\pi + t \gamma)
    \]
    which can be rearranged as
    \[
    R(\gamma)-R(\pi)
    \leq
    \frac{R((1-t)\pi+t\gamma)-R(\pi)}{t},
    \qquad \pi,\gamma \in \Pi(\xi, *_n), \, t \in (0,1].
    \]
    Let $k \in \N$ and take $\pi = \pi^{(k)}$
    as well as
    $\gamma = \pi^{(k+1)}$
    in the above inequality.
    Taking the limit $t \downarrow 0$
    and applying \cref{prop:R_directional_derivative}
    yields
    \[
    R(\pi^{(k+1)}) 
    - R(\pi^{(k)})
    \leq  2 \sum_{i=1}^n \int_X \hat c_i^{(\pi^{(k)})} \dx(\pi_i^{(k+1)} - \pi_i^{(k)}).
    \]
    Furthermore, 
    by construction
    $\pi_i^{(k+1)} = \xi \vert_{V_i^{(k)}}$,
    so \cref{lem:linear_assignment}
    yields
    \[
    \sum_{i=1}^n \int_X \hat c_i^{(\pi^{(k)})} \dx (\pi_i^{(k+1)} - \pi_i^{(k)})
    \leq 
    0.
    \]
    In summary $R(\pi^{(k+1)}) \leq R(\pi^{(k)})$.

    Assume now that $X$ is finite. 
    Then $\Pi(\xi, *_n)$ is a polytope whose vertices are the
    hard assignments $\pi_i=\xi|_{V_i}$,
    and the unit-step update
    $\pi^{(k+1)}_i=\xi|_{V_i^{(k)}}$
    produces such a vertex at every step. 
    If $\pi^{(k)}_i$ is not concentrated on
    $\{x \in X : \hat{c}_i^{(\pi^{(k)})}(x) = \min_j \hat{c}_j^{(\pi^{(k)})}(x)\}$,
    $i \in [n]$,
    then by \cref{lem:linear_assignment} the inequality
    $\sum_i\int \hat c_i^{(\pi^{(k)})}\dx(\pi_i^{(k+1)}-\pi_i^{(k)})\leq 0$ is strict,
    so that
    $R(\pi^{(k+1)})<R(\pi^{(k)})$. 
    Since $R$ then strictly decreases over the finitely many vertex
    values, 
    this can occur only finitely often; 
    hence after finitely many steps $\pi^{(k)}$ is
    concentrated on the desired sets.
\end{proof}

\section{Non-Uniformity of the Lower Quantization Bound}\label{app:example_two_sided_bounds}

Although \cref{cor:GW_quant_rate_1,cor:GW_quant_rate_2} 
pin down the rate, 
the
constants $c$ to bound $\qGW$ by $c\cdot\qW$ from below genuinely depend 
on the measure and cannot be chosen
uniformly over $\GM$. 
A single family witnesses this
simultaneously
for any of the three gauges
$g \in \{\|\cdot-\cdot\|,\ \|\cdot-\cdot\|^2,\ \langle\cdot,\cdot\rangle\}$.
More precisely,
let $\XX \coloneqq \XX^{(n)} \coloneqq (\R^2, g, \xi^{(n)})$,
where
\[
\xi^{(n)} \coloneqq \frac{1}{n+1}\Bigl(
\delta_{p_+} + \delta_{p_-} + \sum_{j=1}^{n-1}\delta_{q_j}\Bigr),
\qquad p_\pm \coloneqq (0,\pm 1),\ \ q_j \coloneqq (j,0).
\]
We consider a quantization with $n$ points.
Since each
$q_j$ has vanishing second coordinate, 
$\langle p_\pm, q_j\rangle = 0$ and
$\|p_\pm - q_j\| = \sqrt{j^2+1}$ 
are independent of the sign,
so $p_+$ and $p_-$ carry
identical gauge to every $q_j$, for each such $g$. 
Consider the $n$-point quantizer obtained
by merging $p_+, p_-$ into a single (abstract) point $z$.
Then, we set $\YY = (Y,h,\upsilon)$,
where $Y \coloneqq \{z,q_1,\dotsc,q_{n-1}\}$,
\[
h(y,y') =
\begin{cases}
\dfrac{1}{4}\displaystyle\sum_{s,s'\in\{+,-\}} g(p_s,p_{s'})
    & y = y' = z,\\[2.2ex]
g(p_+, q_j)
    & \{y,y'\} = \{z, q_j\},\ j \in [n-1],\\[1ex]
g(q_j, q_k)
    & y = q_j,\ y' = q_k,\ j,k \in [n-1].
\end{cases}
\]
and 
$\upsilon = \frac{2}{n+1}\,\delta_z + \frac{1}{n+1}\sum_{j=1}^{n-1}\delta_{q_j}$.
Under the canonical coupling,
\[
\pi = \frac{1}{n+1}\Bigl(
\delta_{(p_+,\, z)} + \delta_{(p_-,\, z)}
+ \sum_{j=1}^{n-1} \delta_{(q_j,\, q_j)}\Bigr)
\;\in\; \Pi\bigl(\xi^{(n)}, \upsilon\bigr),
\]
all pairs involving some $q_j$ contribute zero distortion.
Consequently, the only nonzero distortion under $\pi$ stems from the four pairs in
$\{p_+,p_-\}^2$, each of which is mapped to $(z,z)$. Hence
\begin{align*}
\GW_2^2(\XX,\YY)
&\leq
\iint \bigl(g(x,x') - h(y,y')\bigr)^2 \dx\pi(x,y)\dx\pi(x',y')
\\
&= \frac{1}{(n+1)^2} \underbrace{\sum_{s,s'\in\{+,-\}} \bigl(g(p_s,p_{s'}) - h(z,z)\bigr)^2}_{\eqqcolon C_g}.
\end{align*}
The sum in $C_g$ depends only on the fixed points
$p_\pm = (0,\pm 1)$ and not on $n$,
so it is a
constant $C_g < \infty$ for each of the three gauges. 
Moreover $C_g > 0$.
Indeed,
since
$g(p_+,p_-) \neq g(p_+,p_+)$, 
the four values $g(p_s,p_{s'})$ are not all equal, 
and hence
their deviation from the mean $h(z,z)$ does not vanish.
Since $\YY \in \GM_n$, this yields
\[
\qGW(\XX)^2 \leq \GW_2^2(\XX,\YY) \leq \frac{C_g}{(n+1)^2}.
\]
For the Wasserstein value, 
recall that $\qW(\xi^{(n)})$ 
is taken with respect to the
Euclidean metric in all three cases 
and
(cf.~\cref{eq:n-center-covering-radius})
\[
\qW(\xi^{(n)})^2 = \inf_{\lvert Y'\rvert \leq n} \int d^2(x,Y') \dx\xi^{(n)}(x),
\qquad d(x,Y') \coloneqq \min_{y \in Y'} \|x - y\| .
\]
Fix any $Y' \subset \R^2$ with $\lvert Y'\rvert \leq n$. 
Since $\lvert \supp(\xi^{(n)})\rvert = n+1$ 
and $\lvert Y' \rvert \leq n$,
the nearest-point assignment
that maps
$x \in \supp(\xi^{(n)})$ 
to a minimizer of $Y' \to [0,\infty), y \mapsto \|x - y\|$,
is not injective.
Thus, 
we can fix two distinct atoms 
$x_0 \neq x_0' \in \supp(\xi^{(n)})$
with a
common nearest point $y_0 \in Y'$.
As the atoms of $\xi^{(n)}$ are pairwise at Euclidean
distance at least $1$,
it holds
\[
1 \leq \|x_0 - x_0'\| \leq \|x_0 - y_0\| + \|y_0 - x_0'\| \leq 2 \max\{ d(x_0,Y'),\, d(x_0',Y') \}.
\]
Hence, 
at least one of them, 
say $x_0$, 
satisfies $d(x_0,Y') \geq \tfrac12$. 
Keeping only this atom's
contribution to the integral,
\[
\int d^2(x,Y') \dx\xi^{(n)}(x)
\geq \xi^{(n)}(\{x_0\})\, d^2(x_0,Y')
\geq \frac{1}{n+1}\cdot\frac14 .
\]
Since $Y'$ was arbitrary,
$\qW(\xi^{(n)})^2 \geq \tfrac{1}{4(n+1)}$.
Combining the two bounds,
\[
\frac{\qGW(\XX)}{\qW(\xi^{(n)})}
\leq \frac{\sqrt{C_g/(n+1)^2}}{\sqrt{1/(4(n+1))}}
= \frac{2\sqrt{C_g}}{\sqrt{n+1}}
\xrightarrow{n\to\infty} 0 .
\]
Hence no constant $c>0$ with $\qGW \geq c\,\qW$ can hold uniformly over $\GM$, for any of the
three gauges.

\section*{AI Tool Disclosure}
We used AI, primarily Claude Opus 5 and Fable 5, as follows: First, to identify many inaccuracies in earlier versions of this paper. Second, to generate most of the code for the experiments; we only reviewed the correctness via structural tests and by spot-checking parts of the code. Third, many ideas for the proofs of the lower bounds in \Cref{sec:euclidean} were suggested by AI. Fourth, many relevant references were located by AI. Finally, AI generated first suggestions for the proofs of some auxiliary lemmas. Aside from the aforementioned suggestions, the text in this paper was human-written.

\bibliographystyle{siamplain}
\bibliography{reference}

\end{document}